\crefname{enumi}{part}{parts}
\crefname{equation}{}{}
\newtheorem{theorem}{Theorem}[section]
\newtheorem{lemma}[theorem]{Lemma}
\newtheorem{proposition}[theorem]{Proposition}
\newtheorem{corollary}[theorem]{Corollary}
\theoremstyle{definition}
\newtheorem{definition}[theorem]{Definition}
\newtheorem{hypothesis}[theorem]{Hypothesis}
\newtheorem{problem}[theorem]{Problem}
\theoremstyle{remark}
\newcommand{\Sym}[0]{\mathrm{Sym}}
\newcommand{\lists}[3]{ {#1}_1 {#2} {#1}_2 {#2} \ldots {#2} {#1}_{#3}  }
\newcommand{\factor}{R}
\newcommand{\InP}{\mathfrak{InP}}
\newcommand{\MAut}{\mathrm{MAut}}
\newcommand{\Sy}{\mathrm{S}}
\newcommand{\Cy}{\mathrm{C}}
\newcommand{\Di}{\mathrm{D}}
\newcommand{\Ker}{\mathrm{Ker}}
\newcommand{\Supp}{\mathrm{Supp}}
\algnewcommand{\IIf}[1]{\State\algorithmicif\ #1\ \algorithmicthen}
\algnewcommand{\EndIIf}{\unskip\ \algorithmicend\ \algorithmicif}
\algnewcommand{\Forr}[1]{\State\algorithmicfor\ #1\ \algorithmicdo}
\algnewcommand{\EndForr}{\unskip\ \algorithmicend\ \algorithmicfor}
\title{Computing normalisers of intransitive groups}
\date{\today}
\author[a]{Mun See Chang}
\author[a]{Christopher Jefferson}
\author[b]{Colva M.\ Roney-Dougal}
\affil[a]{{\footnotesize School of Computer Science, University of St Andrews, St Andrews, KY16 9SX, United Kingdom.}}
\affil[b]{{\footnotesize School of Mathematics and Statistics, University of St Andrews, St Andrews, KY16 9SS, United Kingdom.}}
\affil[ ]{{\footnotesize\texttt {\{msc2, caj21, colva.roney-dougal\}@st-andrews.ac.uk}}}
\begin{document}

\maketitle

\begin{abstract}
The normaliser problem takes as input subgroups $G$ and $H$ of the symmetric group $\Sy_n$, and asks one to compute $N_G(H)$. 
The fastest known algorithm for this problem is simply exponential, whilst more efficient algorithms are known for restricted classes of groups. In this paper, we will focus on groups with many orbits. We give a new algorithm for the normaliser problem for these groups that performs many orders of magnitude faster than previous implementations in \texttt{GAP}. 
We also prove that the normaliser problem for the special case $G=\Sy_n$ is at least as hard as computing the group of monomial automorphisms of a linear code over any field of fixed prime order. 
\end{abstract}

%%%%%%%%%%%%%%%%%%%%%%%%%%%%%%%%%%%%%%%%%%%%%%%%%%%%%%%%%%%%%%%%%%%%%%%%%%%%%%%%%%%%%%%%%%%%%%%%%

\section*{Keywords}
Computational group theory; Permutation groups; Backtrack search.

%%%%%%%%%%%%%%%%%%%%%%%%%%%%%%%%%%%%%%%%%%%%%%%%%%%%%%%%%%%%%%%%%%%%%%%%%%%%%%%%%%%%%%%%%%%%%%%%%
\section{Introduction}
\label{section:intro}

%\todo{include more names} 

Given generators for subgroups $G$ and $H$ of the symmetric group $\Sy_n$, the \emph{normaliser problem} asks one to compute a generating set for the normaliser $N_{G}(H)$. 
The fastest known bound for this problem, in general, is Wiebking's simply exponential bound $2^{O(n)}$ \cite{wiebking}.
Better bounds are known for various cases: for example, quasipolynomial $2^{O(\log^3{n})}$ if $N_{\Sy_n}(H)$ is primitive \cite{normPrim, primNorm}, and polynomial if $G$ has restricted composition factors by work of Luks and Miyazaki \cite{luksMiyazaki}. 
In terms of practical computation, there are many algorithms to solve special cases of the normaliser problem (see \cite{holtNorm, hulpkeNormUsingAuts, miyamoto} for examples).

As a consequence of Babai's quasipolynomial solution to the string isomorphism problem~\cite{babaiGI}, the intersection of permutation groups can be computed in quasipolynomial time. 
So, with a quasipolynomial cost, it suffices to compute $N_{\Sy_n}(H)$, then $N_{G}(H) = N_{\Sy_n}(H) \cap G$. 
Furthermore, in practice, computing intersections is much faster than computing normalisers.  
We shall therefore focus on the following problem.
\begin{problem}[\textsc{Norm-Sym}]
Given $H = \langle X \rangle \leq \Sy_n$, compute $N_{\Sy_n}(H)$.
\end{problem}

To better understand its worst-case complexity, it is helpful to study the case where the problem seems to be the hardest. 
In this paper, we will consider intransitive groups, for which \textsc{Norm-Sym} is not known to be solvable in quasipolynomial time. 
Since $N_{\Sy_n}(H)$ may only permute permutation isomorphic $H$-orbits, the following case seems likely to be hardest. 

\begin{definition}\label{our H}
Let $A \leq \Sy_m$ be transitive. 
Let $\mathfrak{InP}(A)$ consist of all groups $H \leq \Sy_{mk}$, for any $k$, such that $H$ has orbits $\mathcal{O} = \{ \Omega_1, \Omega_2, \ldots, \Omega_k \}$ and each restriction $H|_{\Omega_i}$ is permutation isomorphic to $A$.
\end{definition} 

In particular, we will consider $\InP(A)$ with $A$ the regular representation of the cyclic group $\Cy_p$ or the natural representation of the dihedral group $\Di_{2q}$ of order $2q$, where $p$ and $q$ are primes and $q$ is odd.

Luks proved that the graph isomorphism problem is polynomial-time reducible to \textsc{Norm-Sym} \cite{luksHierarchy}, which is a special case of the normaliser problem. 
However, not much more is known about where \textsc{Norm-Sym} fits into the complexity hierarchy. In particular, we do not know its relation to the intersection problem, or whether it is polynomial-time equivalent to the general normaliser problem. 
In 1993, Luks asked \cite{luksHierarchy} for a decision problem polynomial-time equivalent to \textsc{Norm-Sym}, but this remains open. 
Notably, we do not know if \textsc{Conj-Sym}, the problem of deciding if subgroups $H_1$ and $H_2$ of $\Sy_n$ are conjugate in $\Sy_n$, is polynomial-time equivalent to \textsc{Norm-Sym}.

Let \textsc{MAut} be the problem of computing the monomial automorphism group of a linear code $C$, and let \textsc{MEq} be the problem of deciding if linear codes $C_1$ and $C_2$ are monomially equivalent (see \Cref{defn: monomial aut} for details).  
%The \textsc{MAut} problem is at least as hard as the \textsc{MEq} problem \todo{cite}, which is at least as hard as graph isomorphism \cite{codeEquivGI}. 
%It is unknown whether they are polynomial-time equivalent \todo{true? cite}. 
Petrank and Roth showed that \textsc{MEq} for codes of length $k$ over $\mathds{F}_p$ is at least as hard as the graph isomorphism problem \cite{codeEquivGI}, and \textsc{MEq} has time complexity bounded by $2^{O((p-1)k)}$, assuming constant time field operations \cite{sendrierHardnessCodeEquiv, BabaicodeEquivandGI}. 
Our first main complexity result is the following. 

\begin{theorem} \label{complexity main theorem}\label{norm of our groups polytime equiv to code aut}
For a fixed prime $p$, \textsc{Norm-Sym} and \textsc{Conj-Sym} for groups in $\InP(\Cy_p)$ are polynomial-time equivalent to \textsc{MAut} and \textsc{MEq}, respectively, for codes over $\mathds{F}_p$. 
\end{theorem}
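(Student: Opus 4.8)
The plan is to exhibit an explicit dictionary between the groups in $\InP(\Cy_p)$ and the linear codes over $\mathds{F}_p$, under which the symmetric normaliser becomes the monomial automorphism group. Given $H \in \InP(\Cy_p)$ on $pk$ points with orbits $\Omega_1, \dots, \Omega_k$, each restriction $H|_{\Omega_i} = \langle \sigma_i \rangle$ is a regular $\Cy_p$; I would identify $\Omega_i$ with $\mathds{F}_p$ so that $\sigma_i$ acts as $x \mapsto x+1$. Setting $M = \prod_{i=1}^k \langle \sigma_i \rangle \cong \Cy_p^k$, the element $\prod_i \sigma_i^{c_i}$ corresponds to the vector $(c_1, \dots, c_k)$, making $M$ an $\mathds{F}_p$-space of dimension $k$. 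Since every $h \in H$ equals $\prod_i (h|_{\Omega_i})$ with $h|_{\Omega_i} \in \langle \sigma_i \rangle$, we have $H \le M$, so $H$ is an $\mathds{F}_p$-subspace and corresponds to a code $C = C(H) \le \mathds{F}_p^k$; conversely a code with a chosen basis yields generators $\prod_i \sigma_i^{c_i}$ of a group $H(C) \in \InP(\Cy_p)$. Because $p$ is fixed, the degree $pk$ is polynomial in $k$ and both translations are polynomial-time.

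The heart of the argument is to show $N_{\Sy_{pk}}(H) \le N_{\Sy_{pk}}(M)$ and to read off the induced action. First I would prove the containment: a $g$ normalising $H$ permutes the orbits by some $\pi \in \Sy_k$, and a direct computation gives $g\,(H|_{\Omega_i})\,g^{-1} = (H^g)|_{\Omega_{\pi(i)}} = H|_{\Omega_{\pi(i)}}$, so $g$ sends the canonically defined $M$ to itself. Next I would identify $N_{\Sy_{pk}}(M) = \mathrm{AGL}(1,p) \wr \Sy_k$ and compute its conjugation action on $M \cong \mathds{F}_p^k$: the translations (which constitute $M$) act trivially, the multiplier $x \mapsto a_i x$ on block $i$ scales the $i$-th coordinate, and the top group permutes coordinates, with kernel exactly $M$. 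Hence $N_{\Sy_{pk}}(M)/M$ is the full monomial group $\Cy_{p-1} \wr \Sy_k$ acting as monomial matrices on $\mathds{F}_p^k$. As $M$ is abelian and contains $H$, it lies in $N_{\Sy_{pk}}(H)$ and acts trivially, so a $g \in N_{\Sy_{pk}}(M)$ normalises $H$ exactly when the associated monomial map fixes $C$; therefore $N_{\Sy_{pk}}(H)/M \cong \MAut(C)$, and a conjugator from $H_1$ to $H_2$ induces a monomial equivalence $C_1 \to C_2$, and conversely.

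With this correspondence the four reductions are routine and polynomial-time. For \textsc{MAut} $\to$ \textsc{Norm-Sym} I would build $H(C)$, call the oracle, and return the images of the generators of $N_{\Sy_{pk}}(H)$ under $N_{\Sy_{pk}}(M) \to N_{\Sy_{pk}}(M)/M$, recording how each scales and permutes the $\sigma_i$. For \textsc{Norm-Sym} $\to$ \textsc{MAut} I would extract $C(H)$, call the oracle, lift each returned monomial matrix to a permutation in $\mathrm{AGL}(1,p) \wr \Sy_k$, and adjoin the translations generating $M$. The \textsc{MEq}/\textsc{Conj-Sym} pair is the setwise analogue: monomial equivalence of $C_1, C_2$ is decided by $\Sy_{pk}$-conjugacy of $H(C_1), H(C_2)$ (built on a common $M$), and conjugacy of $H_1, H_2$ by monomial equivalence of $C(H_1), C(H_2)$, after first checking that they have the same number of orbits.

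The main obstacle is twofold. The structural containment $N_{\Sy_{pk}}(H) \le N_{\Sy_{pk}}(M)$ is what prevents the normaliser from escaping the monomial world and forces the code structure to surface; it must be argued carefully from how conjugation interacts with orbit-restrictions. Second, the code $C(H)$ depends on the arbitrary choice of a generator $\sigma_i$ in each block and of a labelling of the blocks, and different choices alter $C(H)$ by a monomial transformation. I would therefore need to verify that this ambiguity is precisely absorbed by monomial equivalence, so that the extracted \textsc{MAut} and \textsc{MEq} instances are well-defined, and that passing between degree-$pk$ permutations and $k \times k$ monomial matrices (lifting and projecting) runs in time polynomial in $k$, using that $p$ is constant.
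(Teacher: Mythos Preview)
Your proposal is correct and follows essentially the same route as the paper: your enveloping group $M$ is the paper's $G$, your containment $N_{\Sy_{pk}}(H)\le N_{\Sy_{pk}}(M)=\mathrm{AGL}(1,p)\wr\Sy_k$ is the paper's $N_{\Sy_n}(H)\le L$, and your identification of $N_{\Sy_{pk}}(M)/M$ with the monomial group acting on $\mathds{F}_p^k$ is exactly the epimorphism $\Xi$ with kernel $G$. The only notable difference is that you argue the containment $N_{\Sy_{pk}}(H)\le N_{\Sy_{pk}}(M)$ directly from the orbit-restriction computation, whereas the paper packages this via an explicit wreath-product construction and a citation; your way is arguably cleaner for this special case.
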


As a consequence, \textsc{Norm-Sym} is at least as hard as \textsc{MAut} for codes over any fixed field of prime order (see \Cref{cor: rel complexity}).

As Wiebking's simply exponential algorithm \cite{wiebking} is not implemented, 
the fastest implemented algorithm to compute $N_{\Sy_n}(H)$ has a runtime complexity of $2^{O(n\log{n})}$.
Using methods based on the work of Feulner \cite{feulner}, we shall bound the complexity of \textsc{Norm-Sym} for $H \in \mathfrak{InP}(\Cy_p)$ by $2^{O(\frac{n}{p}\log{\frac{n}{p}} + \log{n})}$.
%This gives a better complexity when $p$ is large, but in \Cref{section: results} we will show our algorithm is better in practice for all values of $p$. 
We show that we may also bound the depth of the search tree (see \Cref{subsection: backtrack}) in $\Sy_n$ by $n/(2p)$, using dual codes.   
Our second main complexity result will then follow immediately from \Cref{limit depth by base size} and \Cref{thm: norm sym for our H in expo klogk}.

\begin{theorem} \label{main theorem - cpk}
The \textsc{Norm-Sym} problem for $H$ in class $\mathfrak{InP}(\Cy_p)$ can be solved in time \linebreak $\mathrm{min}\left( 2^{O(\frac{n}{p} \log{\frac{n}{p}}+ \log{n})},  2^{O(\frac{n}{2p} \log{n})} \right)$. 
\end{theorem}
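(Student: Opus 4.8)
The plan is to establish the two bounds appearing in the minimum independently, and then observe that, since both algorithms solve \textsc{Norm-Sym} for every $H\in\InP(\Cy_p)$, one may simply run the cheaper of the two. As the two running-time estimates are explicit functions of $n$ and $p$, both of which are read off from the input, one can decide in advance which bound is smaller and execute the corresponding algorithm (or interleave the two and halt when either finishes); the total cost is then the stated minimum, and correctness is inherited from the correctness of each of the two underlying methods.

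The first bound, $2^{O(\frac{n}{p}\log{\frac{n}{p}}+\log{n})}$, I would obtain by directly quoting \Cref{thm: norm sym for our H in expo klogk}. Writing $k=n/p$ for the number of orbits of $H$, this is the cost of the Feulner-style algorithm, in which the dominant search is over a structure whose size is governed by $k\log k=\frac{n}{p}\log\frac{n}{p}$, with the additive $\log n$ absorbing the cost of the underlying permutation-group machinery.

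For the second bound I would analyse the backtrack search in $\Sy_n$ directly. By \Cref{limit depth by base size} the search tree has depth at most $n/(2p)$. Since at each node we branch over the possible images of a point, of which there are at most $n$, the tree has at most $n^{n/(2p)}=2^{(n/(2p))\log n}$ leaves, and the number of internal nodes is dominated by this. Each node is processed in polynomial time, so the search terminates in time $2^{O(\frac{n}{2p}\log{n})}$.

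The combination itself is immediate, so the genuine content lies in the two cited results rather than in this theorem. The step most deserving of care is the node-count argument for the backtrack search: one must verify that the refinement and branching performed at each node cost only polynomially much, so that the $2^{(n/(2p))\log n}$ bound on the number of nodes really does control the running time, rather than being inflated by a super-polynomial per-node cost.
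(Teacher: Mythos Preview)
Your overall approach matches the paper's exactly: the theorem is stated as an immediate consequence of \Cref{limit depth by base size} and \Cref{thm: norm sym for our H in expo klogk}, and the paper says nothing more than this.

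One correction to your treatment of the second bound. \Cref{limit depth by base size} is not merely a statement bounding the depth of a backtrack search in $\Sy_n$; its conclusion is already the time bound $2^{O(\frac{n}{2p}\log n)}$. The proof does not run a backtrack search over $\Sy_n$ at all: it enumerates the $O(k^s(p-1)^s)\le n^s$ choices of a tuple $J\in\{1,\ldots,k\}^s$ of column indices together with scalars $v\in(\mathds{F}_p^*)^s$, checks for each in polynomial time whether it extends to an element of $\MAut(\gamma(H))$, and passes to the dual code $H^\bot$ (via \Cref{search in dual}) when $s>k/2$ to ensure $s\le k/2=n/(2p)$. Your paragraph re-deriving the bound via ``depth $\le n/(2p)$ times branching factor $n$ in a backtrack search in $\Sy_n$'' both misreads what the proposition asserts and describes a different algorithm; simply cite the proposition for the bound, as the paper does. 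With that adjustment your argument is correct and coincides with the paper's.
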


%We believe that the class $\mathfrak{InP}(\Cy_p)$ is likely to be one of the hardest cases of the \textsc{Norm-Sym} problem, as current methods are very slow. 
Current practical methods to solve \textsc{Norm-Sym} for groups in $\mathfrak{InP}(\Cy_p)$ are very slow.  
The implementation in the computer algebra system \texttt{GAP}~\cite{GAP4} struggles to compute $N_{\Sy_n}(H)$ even when $H$ has very small degree (median time of more than 10 minutes for degree around 30, see \Cref{section: results}). 
This paper also develops an effective practical algorithm to solve \textsc{Norm-Sym} for groups in $\mathfrak{InP}(\Cy_p)$, using the above ideas. 
In \Cref{section: results}, we provide evidence that our new algorithm performs far better than the one currently implemented in \texttt{GAP}.

The paper structure is as follows. In \Cref{section: general normalisers}, we give some background and present general results for computing $N_{\Sy_n}(H)$ for intransitive groups $H$. 
In \Cref{section: Cpk}, we prove \Cref{complexity main theorem} and in \Cref{section: reduce tree}, we shall prove \Cref{main theorem - cpk}.
In \Cref{subsection: pruning}, we present several techniques for speeding up a backtrack search to compute $N_{\Sy_n}(H)$ for $H  \in \InP(\Cy_p)$. 
We will describe our algorithm to compute $N_{\Sy_n}(H)$ for such $H$ in \Cref{section: algorithm}.
In \Cref{section: dihedral}, we extend our methods to class $\mathfrak{InP}(\Di_{2q})$, where $q$ is an odd prime.
Finally, in \Cref{section: results}, we present runtime data.

%%%%%%%%%%%%%%%%%%%%%%%%%%%%%%%%%%%%%%%%%%%%%%%%%%%%%%%%%%%%%%%%%%%%%%%%%%%%%%%%%%%%%%%%%%%%%%%%%

\section{Background and preliminaries}
\label{section: general normalisers}

In this section, we consider \textsc{Norm-Sym} for an arbitrary intransitive group $H \leq \Sy_n$ with orbits $\mathcal{O} = \{ \Omega_1, \Omega_2, \ldots, \Omega_k \}$. 
In \Cref{subsection: backtrack}, we give some preliminary complexity results and present a short overview of normaliser computation using backtrack search.
In \Cref{subsection: equiv orbs}, we discuss equivalent orbits, then in \Cref{section: Normalisers of subdirect products}, 
for a transitive group $A$, we construct a natural subgroup $L$ of $\Sy_n$ containing $N_{\Sy_n}(H)$ for $H \in \InP(A)$.  

%-------------------------------------------------------------------------

\subsection{Permutation group algorithms}
\label{subsection: backtrack}

In this section, let $G \leq \Sy_n$ be given by a generating set $X$ and let $\Omega = \{ 1,2,\ldots, n\}$. It is well known that in time $O(|X|n^2 + n^5)$, we may replace $X$ by a generating set of size at most $n$ (see, for example, \cite{seress}).
Therefore, we assume that $|X| \leq n$ and we measure the complexity of permutation group problems in terms of $n$. 

A \emph{base} $B$ of $G$ is a tuple $(\beta_1 , \beta_2 , \ldots, \beta_m) \in \Omega^m$ such that the pointwise stabiliser $G_{(\beta_1 , \beta_2 , \ldots ,\beta_m)}$ is trivial. 
The \emph{base image} of an element $g$ of $G$ relative to $B$ is $B^g  \coloneqq  ( \beta_1^g , \beta_2^g , \ldots ,\beta_m^g )$.

For $G = \langle z_1, z_2, \ldots, z_k \rangle \leq \Sy_n$ and $K = \langle y_1, y_2, \ldots, y_l \rangle \leq \Sy_m$, a homomorphism $\phi:G\rightarrow K$ is \emph{given by generator images} if it is encoded by a list $[z_1, \ldots, z_k, y_1, \ldots, y_l, \phi(z_1), \ldots, \phi(z_k)]$.
We shall assume that all homomorphisms are given by generator images. 

The following results are standard and we refer the reader to \cite{handbookCGT, seress} for further details. 

\begin{lemma} \label{elem polytime}
Let $G = \langle X \rangle \leq \Sy_n$. 
\begin{enumerate}
    \item \label{cent in sym}  In polynomial time, we can: compute the orbits of $G$; compute the order $|G|$; compute the restriction $G|_{\Delta}$ of $G$ to a given $G$-invariant set $\Delta$; compute a base for $G$; compute the centraliser $C_{\Sy_n}(G)$.
    \item \label{im of homom and preim of isom} Given a permutation group $K$, a homomorphism $\phi: G \rightarrow K$, and permutations $g \in G$ and $k \in \mathrm{Im}(\phi)$, in polynomial time we can compute the image $\phi(g)$ and a pre-image of $k$. 
\end{enumerate}
\end{lemma}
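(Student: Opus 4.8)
The plan is to let a single workhorse do almost all of the work: the Schreier--Sims algorithm, which, given the generating set $X$ with $|X| \le n$, computes a base and strong generating set (BSGS) for $G$ in time polynomial in $n$, together with the associated stabiliser chain and Schreier transversals. I would first invoke this, as in \cite{handbookCGT, seress}, and then derive each assertion of part~\ref{cent in sym} from the resulting data structure. The orbits of $G$ are obtained by the standard orbit algorithm, repeatedly applying the generators in $X$ from each seed point and recording newly reached points, which also yields Schreier transversals in time $O(|X|n^2)$. A base is produced by the Schreier--Sims computation by definition, so there is nothing further to do. The order $|G|$ is the product of the fundamental orbit lengths along the stabiliser chain, each of which is recorded by the corresponding Schreier tree, so one simply multiplies them. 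For the restriction, since $\Delta$ is $G$-invariant I would relabel the points of $\Delta$ as $1,\ldots,|\Delta|$ and read off the permutation induced on $\Delta$ by each generator in $X$; these generate $G|_{\Delta}$, at cost $O(|X|n)$.

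The one genuinely non-trivial item of part~\ref{cent in sym} is the centraliser $C_{\Sy_n}(G)$, for which I would cite the classical polynomial-time algorithm (again \cite{handbookCGT, seress}). The structural fact to recall is that any centralising element is \emph{semiregular} on each $G$-orbit, and that once its image on a single point of an orbit is fixed its action on the whole of that orbit is forced by commutativity with the generators; assembling these orbit-wise constraints yields generators for $C_{\Sy_n}(G)$ in polynomial time.

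For part~\ref{im of homom and preim of isom}, both directions reduce to \emph{constructive membership} plus bookkeeping. Because $\phi$ is given by generator images, to compute $\phi(g)$ I would sift $g$ through the BSGS of $G$, expressing it as a product of transversal elements and hence as a word in the defining generators $z_1,\ldots,z_k$, and then substitute $\phi(z_i)$ for $z_i$. For a pre-image of $k \in \mathrm{Im}(\phi)$ I would symmetrically build a BSGS for $\mathrm{Im}(\phi) = \langle \phi(z_1),\ldots,\phi(z_k)\rangle$ while recording, for each strong generator, a word in the $\phi(z_i)$ that produces it; sifting $k$ then writes it as some word $w$ in the $\phi(z_i)$, and evaluating the \emph{same} word $w$ in the $z_i$ returns a pre-image, since $\phi$ is a homomorphism.

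The main obstacle, and the only point requiring genuine care, is ensuring that these word manipulations do not blow up exponentially in length: naively expanding $g$ or $k$ as an explicit product of generators can produce words of exponential length. This is resolved by representing all such words as straight-line programs, whose polynomial length is guaranteed by the Schreier--Sims stabiliser-chain data structure, so that both evaluation and pull-back through $\phi$ remain polynomial in $n$.
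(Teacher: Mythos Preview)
Your sketch is correct, and in fact you have done more than the paper does: the paper gives no proof at all for this lemma, simply stating that the results are standard and referring the reader to \cite{handbookCGT, seress}. Your outline---orbit enumeration, Schreier--Sims for a base/strong generating set and hence $|G|$, restriction by reading off generator actions, the classical orbit-by-orbit construction of $C_{\Sy_n}(G)$, and constructive membership via sifting plus straight-line programs for images and pre-images under $\phi$---is exactly what one would find in those references, so your approach is entirely consistent with the paper's (implicit) one, just spelled out.

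One small imprecision: a centralising element need not act on a given $G$-orbit at all (it permutes the $G$-orbits among themselves), so ``semiregular on each $G$-orbit'' should be read as ``semiregular on each $G$-orbit it fixes setwise''; the argument you give (if it fixes one point of an orbit it fixes them all, by commuting with the transitive action of $G$) is correct under that reading.
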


We compute normalisers using backtrack search (see, for example, \cite{handbookCGT, seress} for more details).
Suppose that we want to compute $N_{G}(H)$.
The \emph{search tree $T$ of $G$ with respect to $B$} is a rooted tree of depth $m$, where the root node is labelled by the empty tuple $( \, )$ 
and nodes at depth $d$ are labelled with elements of the orbit $(\lists{\beta}{,}{d})^G$ such that each node $(\lists{\alpha}{,}{d})$ has parent $(\lists{\alpha}{,}{d-1})$. 
We can associate each node of $T$ with a coset of a point stabiliser in $G$ by defining
\[\Psi : \left( \lists{\alpha}{,}{d} \right) \mapsto \{ g \in G \mid \beta_i^g = \alpha_i,  \, \text{for all } 1 \leq i \leq d \}.\]
We traverse the search tree $T$ depth-first and gather the elements of $N_G(H)$ that we find in a group $N$, which is updated as search progresses. 

When solving problems with backtrack search, the runtime is correlated to the number of nodes in the search tree we visit. We reduce this number in two ways. 
Firstly, if there exists a proper subgroup $S$ of $G$ containing $N_G(H)$, we search in the tree of $S$ with respect to a base of $S$ instead.
We will describe how we can find such an $S$ for $H \in \InP(\Cy_p) \cup \InP(\Di_{2p})$ in \Cref{section: Normalisers of subdirect products}. 
Secondly, if we can deduce for some node $\tau$ of $T$ that $\Psi(\tau) \cap N_G(H)$ is empty, or that all elements of $\Psi(\tau)\cap N_G(H)$ is contained in the group $N$ of normalising elements we have already found, then we can skip traversing the subtree rooted at $\tau$. We call this skipping \emph{pruning} $T$. 

Methods for deducing that $\Psi(\tau) \cap N_G(H)$ is a non-empty subset of $N$ are known, see for example \cite[Section~9.1]{seress}, but we present new, efficient, techniques for deducing that $\Psi(\tau) \cap N_G(H) = \emptyset$ for $H \in \InP(\Cy_p)$ in \Cref{subsection: pruning,section: reduce tree}. 
We will also use the following elementary lemma (whose proof is clear) to deduce that $\Psi(\tau) \cap N_G(H) = \emptyset$. 

\begin{lemma} \label{prune by stabs or projections}
\begin{enumerate}
    \item \label{stabToStab} If there exists $\sigma \in N_{\Sy_n}(H)$ such that $(\lists{\delta}{,}{m})^\sigma = (\lists{\gamma}{,}{m})$, then ${(H_{(\lists{\delta}{,}{m})})}^\sigma = H_{(\lists{\gamma}{,}{m})}$. 
    \item \label{norm orbs to orbs with same projection} Let $\Delta_1$ and $\Delta_2$ be unions of $H$-orbits. 
If there exists $\sigma \in N_{\Sy_{n}}(H)$ such that $\Delta_1^\sigma = \Delta_2$, then $(H|_{\Delta_1})^\sigma = H|_{\Delta_2}$.
\end{enumerate}
\end{lemma}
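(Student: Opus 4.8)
The plan is to derive both parts from a single observation: because $\sigma \in N_{\Sy_n}(H)$, conjugation by $\sigma$ is an automorphism of $H$ that carries the combinatorial structure of the $H$-action along the bijection induced by $\sigma$. The one identity driving everything is that, for any point $\alpha$ and any $\sigma$ normalising $H$, the point stabiliser satisfies $(H_\alpha)^\sigma = H_{\alpha^\sigma}$. This is immediate: if $h \in H$ fixes $\alpha$, then $h^\sigma = \sigma^{-1} h \sigma$ again lies in $H$ (as $\sigma$ normalises $H$) and fixes $\alpha^\sigma$, since $(\alpha^\sigma)^{h^\sigma} = (\alpha^h)^\sigma = \alpha^\sigma$; applying the same reasoning to $\sigma^{-1}$ gives the reverse inclusion, hence equality.

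For \Cref{stabToStab}, I would write the pointwise stabiliser of a tuple as an intersection of point stabilisers, $H_{(\lists{\delta}{,}{m})} = \bigcap_{i=1}^m H_{\delta_i}$. Since conjugation by a fixed element commutes with intersection,
\[
(H_{(\lists{\delta}{,}{m})})^\sigma = \bigcap_{i=1}^m (H_{\delta_i})^\sigma = \bigcap_{i=1}^m H_{\delta_i^\sigma} = \bigcap_{i=1}^m H_{\gamma_i} = H_{(\lists{\gamma}{,}{m})},
\]
where the second equality is the displayed identity applied to each factor and the third uses the hypothesis $\delta_i^\sigma = \gamma_i$ for every $i$.

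For \Cref{norm orbs to orbs with same projection}, the only real care needed is in interpreting the conjugation, since $H|_{\Delta_1}$ and $H|_{\Delta_2}$ live in different symmetric groups $\Sym(\Delta_1)$ and $\Sym(\Delta_2)$. Because $\Delta_1$ and $\Delta_2$ are unions of $H$-orbits they are $H$-invariant, so the restrictions are defined; and since $\Delta_1^\sigma = \Delta_2$, the permutation $\sigma$ restricts to a bijection $\sigma|_{\Delta_1}\colon \Delta_1 \to \Delta_2$ along which we transport structure. I would then verify the pointwise identity $(h|_{\Delta_1})^{\sigma|_{\Delta_1}} = (h^\sigma)|_{\Delta_2}$ for every $h \in H$, by evaluating both sides at an arbitrary $\alpha \in \Delta_2$, writing $\alpha = \beta^\sigma$ with $\beta \in \Delta_1$, and using that $\beta^h \in \Delta_1$. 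Taking images over all $h \in H$ and invoking $H^\sigma = H$ then yields $(H|_{\Delta_1})^\sigma = \{(h^\sigma)|_{\Delta_2} : h \in H\} = H|_{\Delta_2}$.

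I do not expect a substantial obstacle here: both parts are bookkeeping around the normalising property of $\sigma$, which is why the statement is flagged as clear. The only point to handle with genuine care is the transport-of-structure reading of conjugation in \Cref{norm orbs to orbs with same projection}, and confirming surjectivity onto all of $H|_{\Delta_2}$ rather than mere containment — this is precisely the step that uses the equality $H^\sigma = H$ and not just $H^\sigma \leq H$.
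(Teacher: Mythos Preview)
Your argument is correct; the paper does not actually prove this lemma, remarking only that its proof is clear, so there is nothing to compare against beyond noting that you have spelled out the standard verification. One minor simplification relative to your write-up: the paper adopts the convention that $\Sym(\Delta)$ is identified with the subgroup of $\Sy_n$ supported on $\Delta$, so in \Cref{norm orbs to orbs with same projection} both $H|_{\Delta_1}$ and $H|_{\Delta_2}$ already live in $\Sy_n$ and the conjugation $(H|_{\Delta_1})^\sigma$ is literal conjugation in $\Sy_n$, making the transport-of-structure discussion unnecessary.
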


Throughout the paper, for all subsets $\Delta$ of $\Omega$, we shall consider subgroups of $\Sym(\Delta)$ as subgroups of $\Sym(\Omega)$ with support $\Delta$. 

%-------------------------------------------------------------------------

\subsection{Equivalence of orbits}
\label{subsection: equiv orbs}

In this subsection, we define an equivalence relation on the set $\mathcal{O}$ of $H$-orbits, and show how it is used in centraliser and normaliser computation.

\begin{definition} \label{defn: equiv orbs} \label{remark: bijection to permutation}
Two orbits $\Omega_i$ and $\Omega_j$ of $H$ are \emph{equivalent}, denoted $\Omega_i \equiv_H \Omega_j$, if  
there exists a bijection $\psi: \Omega_i \rightarrow \Omega_j$ such that 
\begin{equation} \label{eqn: def of equiv orbs}
\psi(\delta^h) = \psi(\delta)^h \quad \text{for all $h \in H$ and $\delta \in \Omega_i$. }
\end{equation}
We say that $\psi$ \emph{witness} the equivalence. 

For $\Omega_i$ and $\Omega_j$ in $\mathcal{O}$ and a bijection $\varphi : \Omega_i \rightarrow \Omega_j$, we denote by $\overline{\varphi}$ the involution in $\Sym(\Omega_i \cup \Omega_j)$ such that $\alpha^{\overline{\varphi}} = \varphi(\alpha)$ for all $\alpha \in \Omega_i$. 
Hence $\Omega_i$ and $\Omega_j$ are equivalent if and only if there exists an involution $\sigma = \overline{\psi} \in \Sym(\Omega_i  \cup \Omega_j)$ such that
\begin{equation} \label{equiv orb iff conjugates}
h|_{\Omega_j} = (h|_{\Omega_i})^{\sigma} \quad \text{for all $h \in H$. }
\end{equation}
\end{definition}

Next we see how the relation $\equiv_H$ helps us compute $C_{\Sy_n}(H)$ for intransitive groups $H$. 

\begin{lemma} [{\cite[\S6.1.2]{seress}}] \label{cent of subdir} 
Let $H\leq \Sy_n$ and let $\mathcal{B}_1,\mathcal{B}_2, \ldots, \mathcal{B}_a$ be the $\equiv_H$-classes. 
For $1 \leq i \leq a$, let $\mathcal{B}_i  \coloneqq  \{ \Omega_{i1}, \Omega_{i2}, \ldots, \Omega_{i|\mathcal{B}_i|} \}$, 
and for $2 \leq j \leq |\mathcal{B}_i|$, let $\psi_{ij}:\Omega_{i1} \rightarrow \Omega_{ij}$ witness the equivalence. 
Let $B_i  \coloneqq  \langle \overline{\psi_{ij}} \mid 2 \leq j \leq |\mathcal{B}_i| \rangle$ and let $C_i \coloneqq  C_{\Sym(\Omega_i)}(H|_{\Omega_i})$. 
Then 
\begin{eqnarray*}
C_{\Sy_n}(H) = \langle  C_1 \times C_2 \times \cdots \times C_k,  B_1 \times B_2 \times \cdots \times B_a  \rangle 
\cong \prod \limits_{i=1}^{t}  C_{\Sym(\Omega_{i1})}(H|_{\Omega_{i1}}) \wr \Sy_{|\mathcal{B}_i|}. 
\end{eqnarray*}
In particular, the elements $\overline{\psi_{ij}}$ and $C_{\Sy_n}(H)$ can be computed in polynomial time.
\end{lemma}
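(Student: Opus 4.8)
The plan is to establish the set-theoretic equality by proving both containments, then read off the wreath-product decomposition class by class, and finally observe that every ingredient is computable in polynomial time using \Cref{elem polytime}. First I would verify that each named generator centralises $H$. For $c \in C_i = C_{\Sym(\Omega_i)}(H|_{\Omega_i})$, extended to fix $\Omega \setminus \Omega_i$ pointwise, and any $h \in H$, the identity $ch = hc$ holds on $\Omega_i$ because $c$ commutes with $h|_{\Omega_i}$ and $h$ preserves $\Omega_i$, and it holds off $\Omega_i$ because $c$ acts trivially there while $h$ keeps us outside $\Omega_i$; hence $C_1 \times \cdots \times C_k \leq C_{\Sy_n}(H)$. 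For a swap $\overline{\psi_{ij}}$, the equivariance $\psi_{ij}(\delta^h) = \psi_{ij}(\delta)^h$ defining a witnessing bijection, in the reformulated form \eqref{equiv orb iff conjugates}, is exactly what is needed to check $\overline{\psi_{ij}}\,h = h\,\overline{\psi_{ij}}$ on $\Omega_{i1}$, on $\Omega_{ij}$, and on the complement. Thus the whole right-hand group lies in $C_{\Sy_n}(H)$.

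For the reverse containment, take $\sigma \in C_{\Sy_n}(H)$. Since $(\alpha^h)^\sigma = (\alpha^\sigma)^h$ for all $h \in H$, the element $\sigma$ carries each $H$-orbit onto an $H$-orbit, and the restriction $\sigma|_{\Omega_i} : \Omega_i \to \Omega_i^\sigma$ is an equivariant bijection, so $\Omega_i \equiv_H \Omega_i^\sigma$ by \eqref{eqn: def of equiv orbs}; that is, $\sigma$ permutes the orbits only within each class $\mathcal{B}_i$. Each $\overline{\psi_{ij}}$ acts on the orbit set as the transposition $(\Omega_{i1}\;\Omega_{ij})$, and transpositions of this form generate the full symmetric group on $\mathcal{B}_i$, so the image of $B_1 \times \cdots \times B_a$ under the action on orbits is the full product of symmetric groups on the classes. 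Hence there is $b \in B_1 \times \cdots \times B_a$ inducing the same permutation of orbits as $\sigma$. Then $\sigma b^{-1}$ fixes every orbit setwise, so it lies in $\prod_i \Sym(\Omega_i)$ and still centralises $H$; restricting to each orbit shows $\sigma b^{-1} \in C_1 \times \cdots \times C_k$. Therefore $\sigma \in \langle C_1 \times \cdots \times C_k,\, B_1 \times \cdots \times B_a \rangle$, giving the equality.

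To obtain the isomorphism I would group the generators by $\equiv_H$-class. The subgroup attached to $\mathcal{B}_i$ is generated by $\prod_j C_{\Sym(\Omega_{ij})}(H|_{\Omega_{ij}})$ together with $B_i$, and by the argument above it is precisely $C_{\Sym(\bigcup_j \Omega_{ij})}(H|_{\bigcup_j \Omega_{ij}})$. Conjugation by $\overline{\psi_{ij}}$ sends $C_{\Sym(\Omega_{i1})}(H|_{\Omega_{i1}})$ isomorphically onto $C_{\Sym(\Omega_{ij})}(H|_{\Omega_{ij}})$, because the equivariant bijection $\psi_{ij}$ conjugates $H|_{\Omega_{i1}}$ to $H|_{\Omega_{ij}}$ and hence their centralisers accordingly, while $B_i$ permutes these isomorphic factors as $\Sy_{|\mathcal{B}_i|}$. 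This is exactly the base-group–top-group description of $C_{\Sym(\Omega_{i1})}(H|_{\Omega_{i1}}) \wr \Sy_{|\mathcal{B}_i|}$, and taking the direct product over the $\equiv_H$-classes yields the stated isomorphism.

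Finally, for polynomial-time computability, \Cref{elem polytime} gives the orbits, the restrictions $H|_{\Omega_i}$, and each centraliser $C_i = C_{\Sym(\Omega_i)}(H|_{\Omega_i})$ in polynomial time. The one step I expect to be the main obstacle is deciding $\Omega_i \equiv_H \Omega_j$ and producing a witness $\psi_{ij}$ efficiently. Here I would use that $H|_{\Omega_i}$ is transitive, so an equivariant bijection is determined by the image of a single point $\delta \in \Omega_i$; one then tests the candidate images, each test reducing to a compatibility check on point stabilisers that runs in polynomial time. Building the involutions $\overline{\psi_{ij}}$ from the witnesses and assembling the generating set is then immediate, so the elements $\overline{\psi_{ij}}$ and $C_{\Sy_n}(H)$ are computed in polynomial time.
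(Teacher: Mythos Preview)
The paper does not prove this lemma: it is stated with a citation to \cite[\S6.1.2]{seress} and used as a black box. Your proposal supplies a complete and correct direct proof, and it is essentially the standard argument one finds in Seress---show each named generator centralises $H$, then for an arbitrary $\sigma\in C_{\Sy_n}(H)$ peel off an element of $B_1\times\cdots\times B_a$ matching its action on orbits so that the quotient lies in $C_1\times\cdots\times C_k$; the wreath decomposition and the polynomial-time claim then follow. Your handling of the only non-obvious computational step (deciding $\Omega_i\equiv_H\Omega_j$ and producing a witness by fixing a base point and testing candidate images via point-stabiliser equality) is exactly the approach Seress uses, so nothing is missing.
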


%-------------------------------------------------------------------------

By \Cref{prune by stabs or projections}, we can deduce that there are no normalising elements in a subtree of $T$ by showing that some restrictions or stabilisers are not conjugate in $\Sy_n$.
We can use orbit equivalence to show that subgroups of $\Sy_n$ are not conjugate.

\begin{lemma} \label{conjn maps equiv class to equiv class}
Let $R$ and $Q$ be subgroups of $\Sy_n$ such that $Q = R^\sigma$ for some $\sigma \in \Sy_n$, and let $\mathcal{B}$ be an $\equiv_R$-class of orbits. 
Then $\{ \Delta^\sigma \mid \Delta \in \mathcal{B} \}$ is an $\equiv_Q$-class. 
Hence, with $H$ as in \Cref{cent of subdir}, the group $N_{\Sy_{n}}(H)$ acts on $\{ \mathcal{B}_1, \mathcal{B}_2, \ldots, \mathcal{B}_a\}$.
\end{lemma}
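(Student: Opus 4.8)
The plan is to prove the statement in two halves, corresponding to its two sentences, with the first (that conjugation sends $\equiv_R$-classes to $\equiv_Q$-classes) doing essentially all the work and the second (the action of the normaliser) following as a quick specialisation.

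For the first part, I would start by unwinding the definitions. Fix $\sigma \in \Sy_n$ with $Q = R^\sigma$, and let $\mathcal{B}$ be an $\equiv_R$-class of $R$-orbits. The first routine observation is that $\sigma$ carries $R$-orbits bijectively to $Q$-orbits: if $\Delta$ is an $R$-orbit then $\Delta^\sigma$ is a $Q$-orbit, because for $q = r^\sigma \in Q$ and $\delta \in \Delta$ we have $(\delta^\sigma)^q = (\delta^\sigma)^{\sigma^{-1} r \sigma} = (\delta^r)^\sigma$, so the $Q$-action on $\Delta^\sigma$ is just the $\sigma$-conjugate of the $R$-action on $\Delta$. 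Thus $\{\Delta^\sigma \mid \Delta \in \mathcal{B}\}$ is at least a set of $Q$-orbits, and I need to show it is exactly one $\equiv_Q$-class. The heart of the argument is to show that $\sigma$ preserves orbit equivalence, i.e.\ that for $R$-orbits $\Delta_1, \Delta_2$ we have $\Delta_1 \equiv_R \Delta_2$ if and only if $\Delta_1^\sigma \equiv_Q \Delta_2^\sigma$.

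To prove this equivalence, I would use the reformulation of orbit equivalence in terms of conjugating involutions from \Cref{remark: bijection to permutation}: $\Delta_1 \equiv_R \Delta_2$ iff there is an involution $\tau \in \Sym(\Delta_1 \cup \Delta_2)$ with $r|_{\Delta_2} = (r|_{\Delta_1})^\tau$ for all $r \in R$. Given such a $\tau$, the natural candidate witnessing $\Delta_1^\sigma \equiv_Q \Delta_2^\sigma$ is the conjugate $\tau^\sigma$, restricted appropriately; one checks that $\tau^\sigma$ is an involution in $\Sym(\Delta_1^\sigma \cup \Delta_2^\sigma)$ and that for $q = r^\sigma$, the conjugation relation $q|_{\Delta_2^\sigma} = (q|_{\Delta_1^\sigma})^{\tau^\sigma}$ follows by transporting the corresponding relation for $R$ through $\sigma$, using the compatibility of restriction with conjugation by $\sigma$. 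Since $\sigma$ is invertible and $R = Q^{\sigma^{-1}}$, the reverse implication is symmetric. This shows $\sigma$ induces a bijection between $\equiv_R$-classes and $\equiv_Q$-classes that sends $\mathcal{B}$ to $\{\Delta^\sigma \mid \Delta \in \mathcal{B}\}$, which is therefore a full $\equiv_Q$-class.

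For the second part, I would specialise to $R = Q = H$ and $\sigma \in N_{\Sy_n}(H)$, so that $H^\sigma = H$ and the $\equiv_H$-classes $\mathcal{B}_1, \ldots, \mathcal{B}_a$ are permuted among themselves by the map $\mathcal{B}_i \mapsto \{\Delta^\sigma \mid \Delta \in \mathcal{B}_i\}$; verifying that $\sigma \mapsto (\text{this permutation of classes})$ is a homomorphism is immediate from the associativity of the action of $\Sy_n$ on subsets of $\Omega$. I expect the main obstacle to be purely bookkeeping: one must be careful to distinguish restriction of a group element to a set of points from its image under conjugation, and to check that the involution $\tau^\sigma$ genuinely lands in $\Sym(\Delta_1^\sigma \cup \Delta_2^\sigma)$ and acts as the required bijection $\Delta_1^\sigma \to \Delta_2^\sigma$. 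None of this is conceptually hard, but the conjugation-versus-restriction identities need to be stated cleanly to avoid index errors.
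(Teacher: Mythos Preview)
Your proposal is correct and follows essentially the same approach as the paper: both use the involution characterisation of orbit equivalence from \Cref{remark: bijection to permutation} and transport the witnessing involution $\tau$ to $\tau^\sigma = \sigma^{-1}\tau\sigma$ via the identity $q|_{\Delta^\sigma} = (r|_\Delta)^\sigma$ for $q = r^\sigma$. You are in fact slightly more careful than the paper, which only writes out the forward implication $\Delta \equiv_R \Gamma \Rightarrow \Delta^\sigma \equiv_Q \Gamma^\sigma$ and leaves the reverse (needed to conclude that $\{\Delta^\sigma \mid \Delta \in \mathcal{B}\}$ is a \emph{full} $\equiv_Q$-class, not merely contained in one) implicit.
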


\begin{proof}
If $\Delta$ and $\Gamma$ are equivalent $R$-orbits,  
then by \Cref{equiv orb iff conjugates} there exists an involution $\mu$ in $ \Sym(\Delta\cup\Gamma)$ such that $r|_{\Gamma} = (r|_{\Delta})^\mu$ for all $r \in R$. 
Let $q \in Q$. Then there exists $r \in R$ such that $r^\sigma = q$.
So
\[ 
q|_{\Gamma^\sigma}= (r|_{\Gamma})^\sigma = (r|_{\Delta})^{\mu\sigma} 
= (r|_{\Delta})^{\sigma\sigma^{-1}\mu\sigma} =  (q|_{\Delta^\sigma})^{\sigma^{-1}\mu\sigma}, 
\]
and hence $\Delta^\sigma \equiv_Q \Gamma^\sigma$ by \Cref{equiv orb iff conjugates}.
\end{proof}

Next, we show that if $\equiv_H$ is not the equality relation then the computation of $N_{\Sy_n}(H)$ reduces in polynomial time to computing the normaliser of a group $H_1$ of a smaller degree for which $\equiv_{H_1}$ is equality.
For a partition $P$ of a set $S$, we denote by $[s]$, or $[s]_P$ if we wish to emphasise $P$, the cell of $P$ containing $s$. 
For $s,s' \in S$, we write $s \sim s'$ (or $s \sim_P s'$) to mean that $s$ and $s'$ are in the same cell of $P$, so $[s] = [s']$. 

\begin{proposition} \label{can remove equiv orbs}
\textsc{Norm-Sym} for $H \leq \Sy_n$ reduces in polynomial time to the special case where the $H$-orbits are pairwise inequivalent. 
\end{proposition}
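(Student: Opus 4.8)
The plan is to collapse each $\equiv_H$-class to a single representative orbit, solve the resulting pairwise-inequivalent instance with the special-case algorithm, and then rebuild $N_{\Sy_n}(H)$ from that solution together with the (polynomial-time computable) centraliser $C_{\Sy_n}(H)$.

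First I would compute the $\equiv_H$-classes $\mathcal{B}_1, \ldots, \mathcal{B}_a$ together with witnessing bijections $\psi_{ij} : \Omega_{i1} \to \Omega_{ij}$; this is polynomial time by \Cref{cent of subdir}. Writing $\Delta = \Omega_{11} \cup \Omega_{21} \cup \cdots \cup \Omega_{a1}$ for the union of the chosen representatives, I would set $H_1 := H|_\Delta \leq \Sym(\Delta)$, which is computable in polynomial time by \Cref{elem polytime}. Since for $\delta \in \Omega_{i1}$ and $h \in H$ we have $\delta^h = \delta^{h|_{\Omega_{i1}}}$, the equivalence condition \eqref{eqn: def of equiv orbs} for $\Omega_{i1}$ and $\Omega_{j1}$ is literally the same whether tested in $H$ or in $H_1$; hence representatives of distinct classes remain inequivalent and $\equiv_{H_1}$ is equality, so $H_1$ is a valid instance of the special case. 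Solving \textsc{Norm-Sym} for $H_1$ then returns a generating set $Y$ for $N_{\Sym(\Delta)}(H_1)$.

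The heart of the argument is the identity
\begin{equation*}
N_{\Sy_n}(H) = \big\langle\, C_{\Sy_n}(H),\ \{\,\widehat{\sigma} : \sigma \in Y \,\}\,\big\rangle,
\end{equation*}
where $\widehat{\sigma}$ is a lift of $\sigma$ constructed as follows. The witnesses $\psi_{ij}$ identify the union of the orbits in $\mathcal{B}_i$ with $\Omega_{i1} \times \{1, \ldots, |\mathcal{B}_i|\}$ so that $H$ acts ``diagonally'', fixing the copy index; an element $\sigma \in N_{\Sym(\Delta)}(H_1)$ permuting the representatives as $\Omega_{i1} \mapsto \Omega_{\rho(i)1}$ is lifted by letting it act as $\sigma$ on each copy while sending copy $j$ of class $i$ to copy $j$ of class $\rho(i)$, that is, $(\psi_{ij}(\delta))^{\widehat{\sigma}} := \psi_{\rho(i)j}(\delta^\sigma)$. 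This is constructible in polynomial time, and because $\sigma$ normalises $H_1$ and $H$ is diagonal, $\widehat{\sigma}$ normalises $H$. The reverse inclusion is where I expect the real work: I would show that restricting a normalising element to $\Delta$, after correcting by the relevant witnesses so that it lands back on representatives, defines a surjective homomorphism $\pi : N_{\Sy_n}(H) \to N_{\Sym(\Delta)}(H_1)$. Surjectivity is immediate since $\sigma \mapsto \widehat{\sigma}$ is a section; that $\pi$ is well defined and that its image really normalises $H_1$ uses \Cref{conjn maps equiv class to equiv class} together with the second part of \Cref{prune by stabs or projections}, applied to the representative orbits. Given $\tau \in N_{\Sy_n}(H)$, writing $\pi(\tau)$ as a word $w$ in $Y$ and forming the corresponding word $\widehat{w}$ in the lifts yields $\pi(\tau\widehat{w}^{-1}) = 1$, so the one thing left to prove is that $\ker \pi \subseteq C_{\Sy_n}(H)$.

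I expect this last containment to be the main obstacle. An element of $\ker\pi$ fixes each class setwise and acts trivially, after correction, on every representative orbit, and I must show it can only permute copies within a class and centralise the diagonal action, that is, that it lies in the wreath-product description of $C_{\Sy_n}(H)$ from \Cref{cent of subdir}. This is exactly where the diagonal structure supplied by the witnesses is essential: trivial corrected action on $\Omega_{i1}$ should force, copy by copy, the element to agree with a product of the $\overline{\psi_{ij}}$ and a permutation centralising $H|_{\Omega_{i1}}$. Once $\ker\pi \subseteq C_{\Sy_n}(H)$ is established, $\tau\widehat{w}^{-1} \in C_{\Sy_n}(H)$ gives $\tau \in \langle C_{\Sy_n}(H), \widehat{w}\rangle$, proving the displayed identity. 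As $C_{\Sy_n}(H)$ and all lifts $\widehat{\sigma}$ are obtained in polynomial time and exactly one call to the special-case solver is made, the whole reduction runs in polynomial time.
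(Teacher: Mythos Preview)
Your lift $\widehat{\sigma}$ is not well defined for every $\sigma \in N_{\Sym(\Delta)}(H_1)$. The formula $(\psi_{ij}(\delta))^{\widehat{\sigma}} := \psi_{\rho(i)j}(\delta^\sigma)$ presupposes that class $\rho(i)$ has at least as many orbits as class $i$, but nothing in $H_1 = H|_\Delta$ records the sizes $|\mathcal{B}_i|$, so $N_{\Sym(\Delta)}(H_1)$ may contain elements sending $\Omega_{i1}$ to $\Omega_{\rho(i)1}$ with $|\mathcal{B}_i| \neq |\mathcal{B}_{\rho(i)}|$. Concretely, take $H = \langle (1\,2)(3\,4),\,(5\,6)\rangle \leq \Sy_6$. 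The $\equiv_H$-classes are $\mathcal{B}_1=\{\{1,2\},\{3,4\}\}$ and $\mathcal{B}_2=\{\{5,6\}\}$, so $\Delta=\{1,2,5,6\}$ and $H_1=\langle (1\,2),(5\,6)\rangle$, whose two orbits are indeed $H_1$-inequivalent. But $\sigma=(1\,5)(2\,6)$ normalises $H_1$, and your recipe would have to send $\{3,4\}$ (copy $2$ of class $1$) to a nonexistent copy $2$ of class $2$. Relatedly, your surjectivity claim for $\pi$ fails: by \Cref{conjn maps equiv class to equiv class} no element of $N_{\Sy_6}(H)$ carries $\{1,2\}$ to $\{5,6\}$, so $\sigma$ is not in the image of $\pi$ and $\widehat{\,\cdot\,}$ cannot be a section. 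Thus the step you flagged as the ``main obstacle'' (the kernel containment) is not where the argument breaks; the obstruction appears already when defining the lift.

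The paper's proof repairs exactly this point. It does not lift all of $N_{\Sym(\Gamma)}(H|_\Gamma)$, but only $N_U(H|_\Gamma)$, where $U\leq\Sym(\Gamma)$ is the stabiliser of the partition $\mathcal{P}(H)$ of the representative orbits defined by $\Omega_{i1}\sim\Omega_{j1}\iff |\mathcal{B}_i|=|\mathcal{B}_j|$. For $u\in U$ one always has $|\mathcal{B}_i|=|\mathcal{B}_{\rho(i)}|$, so the lift $\theta(u)$ is well defined on every copy, and the paper then proves $N_{\Sy_n}(H)=\langle \theta(N_U(H|_\Gamma)),\,C_{\Sy_n}(H)\rangle$. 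Your outline becomes correct once you make the same restriction, replacing $N_{\Sym(\Delta)}(H_1)$ by $N_U(H_1)$ throughout; the reduced instance is then the normaliser of $H|_\Gamma$ inside the Young-type subgroup $U$ rather than inside the full $\Sym(\Gamma)$.
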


\begin{proof}
We may assume that $H$ is intransitive and that $\equiv_H$ is not the equality relation. 
With the notation of \Cref{cent of subdir}, 
denote by $\mathcal{P}(H)$ the partition of $\{ \Omega_{11}, \Omega_{21}, \ldots, \Omega_{t1} \}$ such that $\Omega_{i1} \sim \Omega_{j1}$ if and only if $|\mathcal{B}_i| = |\mathcal{B}_j|$.
Let $\Gamma  \coloneqq  \bigcup_{i=1}^{t} \Omega_{i1}$ and let $U$ be the stabiliser of $\mathcal{P}(H)$ in $\Sym(\Gamma)$. 
We prove the claim by showing that, in polynomial time, one can construct a homomorphism $\theta: U \rightarrow \Sy_n$ such that $N_{\Sy_n}(H) = \langle \theta(N_U(H|_{\Gamma})),  C_{\Sy_n}(H) \rangle$. 

Let the bijections $\psi_{ij}: \Omega_{i1} \rightarrow \Omega_{j1}$ be as in  \Cref{cent of subdir} and let $\psi_{i1} = 1$. 
For all $u \in U$, define $\theta(u)$ by: for all $\alpha \in \Omega_{is}$, if $ \Omega_{i1}^{u} = \Omega_{j1}$ then $\alpha^{\theta(u)} = \alpha^{\overline{\psi_{is}}^{-1} u \overline{\psi_{js}}} \in \Omega_{js}$. 
One can check that $\theta(u)$ is indeed in $\Sy_n$ and that $\theta$ is a homomorphism.  

To see that $N_{\Sy_n}(H)$ contains $\langle \theta(N_U(H|_{\Gamma})),  C_{\Sy_n}(H) \rangle$, it suffices to show that it contains the image of $N_U(H|_{\Gamma})$ under $\theta$. Let $h \in H$ and $u \in N_U(H|_{\Gamma})$.
Then there exists an $h' \in H$ such that $h'|_{\Gamma} = (h|_{\Gamma})^{u}$. 
%We will show that $h^{\theta(u)} = h'$. 
As $\overline{\psi_{j1}}=1$, for all $\alpha \in \Gamma$, the image $\alpha^{\theta(u)} = \alpha^{u}$ is in $\Gamma$. 
So $h^{\theta(u)}|_{\Gamma} = (h|_{\Gamma})^{u} =h'|_{\Gamma}$.  
Now for $1 \leq i \leq t$ and $2 \leq s \leq |\mathcal{B}_i|$, by the definition of $\overline{\psi_{is}}$,
\[h^{\theta(u)}|_{\Omega_{is}} = (h^{\theta(u)}|_{\Omega_{i1}})^{\overline{\psi_{is}}}
= (h'|_{\Omega_{i1}})^{\overline{\psi_{is}}} = h'|_{\Omega_{is}},
\]
so $h^{\theta(u)} = h'$. Hence $\theta(u) \in N_{\Sy_n}(H)$. 

To show the converse containment, let $\nu \in N_{\Sy_n}(H)$. 
By \Cref{conjn maps equiv class to equiv class}, $\nu$ acts on the $\equiv_H$-classes $\mathcal{B}_1, \mathcal{B}_2, \ldots, \mathcal{B}_a$.
By \Cref{cent of subdir}, $C_{\Sy_n}(H)$ induces the full symmetric group, independently, on each $\mathcal{B}_i$. 
So there exists a $c \in  C_{\Sy_n}(H)$ such that $\nu c$ fixes $\Gamma$ setwise, and so $(\nu c)|_{\Gamma} \in N_U(H|_{\Gamma})$. 
Let $\sigma \coloneqq \theta((\nu c)|_{\Gamma})$ and $h \in H$. Then $(h|_{\Gamma})^{\nu c} = (h|_{\Gamma})^{\sigma}$ and so by the argument of the previous paragraph $h^{\nu c} = h^{\sigma}$.
Therefore $\nu c \sigma^{-1} \in  C_{\Sy_n}(H)$ and hence $\nu \in \langle \theta(N_U(H|_{\Gamma})),  C_{\Sy_n}(H) \rangle$.

Since the permutations $\overline{\psi_{ij}}$ and $C_{\Sy_n}(H)$ can be computed in polynomial time by \Cref{cent of subdir}, the complexity claim is immediate. 
\end{proof} 

Hence in much of the remainder of the paper, we shall assume that the $\equiv_H$-relation is trivial. 

%-------------------------------------------------------------------------

\subsection{Normalisers of groups in \texorpdfstring{$\InP(A)$}{InP(A)}: overgroups} 
\label{section: Normalisers of subdirect products}

We now assume that there exists an integer $m$ and a transitive subgroup $A$ of $\Sy_m$ such that $H$ is in $\InP(A)$. 
%In this subsection, we introduce groups $G \cong A^k$ and $L \cong N_{\Sy_m}(A) \wr \Sy_k$ such that $H \leq G$ and $N_{\Sy_n}(H) \leq L$.

\begin{definition} \label{enveloping group}
A subgroup $R$ of $S = Q_1 \times Q_2 \times \cdots \times Q_k$ is a \emph{subdirect product} of $S$ if each projection of $R$ onto $Q_i$ is surjective. 

Let $G_i \coloneqq H|_{\Omega_i}$ for $1 \leq i \leq k$. 
Then $H$ is a subdirect product of $G = G_1 \times G_2 \times \cdots \times G_k$, where we identify the direct product with the corresponding subgroup of $\Sy_n$. We call $G$ the \emph{enveloping group} of $H$.
\end{definition}

Two permutation groups $R \leq \Sym(\Delta)$ and $S\leq \Sym(\Gamma)$ are \emph{permutation isomorphic} if there exists a bijection $\phi : \Delta \rightarrow \Gamma$ and an isomorphism $\psi: R \rightarrow S$ such that $\phi(\delta^r) = \phi(\delta)^{\psi(r)}$ for all $\delta \in \Delta$ and $r \in R$.
We say that $\phi$ \emph{witnesses} the permutation isomorphism from $R$ to $S$. 

Next we define a subgroup $L \leq \Sy_n$, analyse its structure and show that $L$ contains $N_{\Sy_n}(H)$. 

\begin{lemma} \label{define and analyse L}\label{def of subdirect of perm isom copies}{\label{normInWreath}}
Let $G = G_1 \times G_2 \times \cdots \times G_k$ be the enveloping group of $H$. 
For $2 \leq j \leq k$, let $\phi_{j}: \Omega_1 \rightarrow \Omega_j$ witness the permutation isomorphism from $G_1$ to $G_j$. 
For $1 \leq i \leq k$, let $N_i  \coloneqq  N_{\Sym(\Omega_i)}(G_i)$.
Let $B  \coloneqq  \langle N_1, N_2 , \ldots , N_k \rangle \leq \Sy_n$, let $K \coloneqq \langle \overline{\phi_j} \mid 2 \leq j \leq k  \rangle \leq \Sy_n$, and let $L \coloneqq  \langle  B,  K \rangle$. Then
\begin{enumerate}
    \item \label{K isom Sk} 
    $K$ acts faithfully as $\Sym(\mathcal{O})$ on the set $\mathcal{O}$ of $H$-orbits; 
    \item \label{item: L is wreath} $L$ is permutation isomorphic to $N_1 \wr \Sy_k$ in its imprimitive action;
    \item \label{norm of H in wreath} $N_{\Sy_n}(H) \leq L$;
    \item \label{norm to BK} in polynomial time, given $l \in L$,  we can compute $b \in B$ and $\kappa \in K$ such that $l = b\kappa$.
\end{enumerate}
\end{lemma}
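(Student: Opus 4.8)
The plan is to make the identification of $L$ with a wreath product explicit, read off the first two statements from it, then establish the containment $N_{\Sy_n}(H)\le L$ by an orbit-permutation argument, and finally invert the semidirect decomposition computationally. Throughout I would set $\phi_1 \coloneqq \mathrm{id}_{\Omega_1}$.

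First I would define the bijection $\Phi : \Omega_1 \times \{1,\ldots,k\} \to \Omega$ by $\Phi(\alpha, i) \coloneqq \phi_i(\alpha)$; since the $\Omega_i$ partition $\Omega$ and each $\phi_i$ is a bijection onto $\Omega_i$, this $\Phi$ is well defined and bijective. A direct check on generators shows that, transported through $\Phi$, each $\overline{\phi_j}$ acts by $(\alpha, i) \mapsto (\alpha, i^{(1\,j)})$, while each element $n^{\overline{\phi_i}}$ of $N_i = N_1^{\overline{\phi_i}}$ alters only the $i$-th coordinate, via $n \in N_1$. Thus $\Phi$ carries the imprimitive action of $N_1 \wr \Sy_k$ on $\Omega_1 \times \{1,\ldots,k\}$ onto the action of $L = \langle B, K\rangle$ on $\Omega$; as that imprimitive action is faithful, the induced map $N_1 \wr \Sy_k \to \Sym(\Omega)$ is injective with image $L$, which gives the second statement, and restricting to the top group gives the first: $K$ maps isomorphically onto the block permutations $\Sym(\mathcal{O})$, with $\overline{\phi_j} \mapsto (\Omega_1\,\Omega_j)$, faithfully. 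Along the way one records that $G_i \le N_i$, so the enveloping group $G$ lies in $B$, and that $K$ permutes the direct factors $G_i$ of $G$ exactly as it permutes $\mathcal{O}$; hence any $\kappa \in K$ inducing $\pi \in \Sym(\mathcal{O})$ satisfies $G_i^{\kappa} = G_{i^{\pi}}$ for all $i$.

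The heart of the argument, and the step I expect to be the main obstacle, is the containment $N_{\Sy_n}(H) \le L$. Let $\sigma \in N_{\Sy_n}(H)$. Since $\sigma$ normalises $H$ it permutes the $H$-orbits, say $\Omega_i^{\sigma} = \Omega_{i^{\pi}}$ for some $\pi \in \Sym(\mathcal{O})$, and by the second part of \Cref{prune by stabs or projections} applied to the single orbits $\Omega_i$ and $\Omega_{i^{\pi}}$ we obtain $G_i^{\sigma} = (H|_{\Omega_i})^{\sigma} = H|_{\Omega_{i^{\pi}}} = G_{i^{\pi}}$. Using the first statement I would then pick the unique $\kappa \in K$ inducing $\pi$ on $\mathcal{O}$ and set $b \coloneqq \sigma\kappa^{-1}$. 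This $b$ fixes every orbit setwise, and
\[
G_i^{\,b} = (G_i^{\sigma})^{\kappa^{-1}} = G_{i^{\pi}}^{\,\kappa^{-1}} = G_{(i^{\pi})^{\pi^{-1}}} = G_i ,
\]
so each restriction $b|_{\Omega_i}$ normalises $G_i$, whence $b|_{\Omega_i} \in N_i$ and $b \in B$. Therefore $\sigma = b\kappa \in \langle B, K\rangle = L$, as required.

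Finally, for the fourth statement I would exploit that $L = B \rtimes K$ with $K$ acting faithfully as $\Sym(\mathcal{O})$, so the decomposition $l = b\kappa$ is unique and its $K$-part is completely determined by the permutation $l$ induces on $\mathcal{O}$. Concretely: compute the $H$-orbits and, by tracking the image under $l$ of one point from each orbit, read off the permutation $\pi$ that $l$ induces on $\mathcal{O}$; write $\pi$ as a product of the star transpositions $(\Omega_1\,\Omega_j)$ and take the corresponding product of the $\overline{\phi_j}$ to get $\kappa \in K$ inducing $\pi$; then set $b \coloneqq l\kappa^{-1}$. Because $l \in L$ and the $K$-action on $\mathcal{O}$ is faithful, this $\kappa$ is forced and $b$ necessarily lands in $B$. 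Each step is a bounded number of orbit computations and multiplications of degree-$n$ permutations, so by \Cref{elem polytime} the whole procedure runs in polynomial time.
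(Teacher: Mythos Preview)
Your proof is correct, and your approach to parts~(1)--(3) is genuinely different from the paper's. For parts~(1) and~(2) you build an explicit coordinate bijection $\Phi:\Omega_1\times\{1,\dots,k\}\to\Omega$ and transport the imprimitive action of $N_1\wr\Sy_k$ through it, reading off both the wreath product identification and the faithfulness of $K$ on $\mathcal O$ at once. The paper instead argues intrinsically: it shows surjectivity and injectivity of the $K$-action on $\mathcal O$ directly (the injectivity step involves a word argument in the $\overline{\phi_j}$), then establishes transitivity of $L$, identifies $\mathcal O$ as a block system, and computes the kernel of the block action to be $B$. Your coordinate approach is cleaner and avoids the word-level reasoning. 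For part~(3) the paper simply cites \cite[\S11]{hulpkeTransitive} for the containment $N_{\Sy_n}(H)\le N_1\wr\Sy_k$, whereas you give a short self-contained argument via \Cref{prune by stabs or projections}: peel off the induced block permutation using part~(1), and check that the residual element normalises each $G_i$. This is more elementary and keeps the paper self-contained. Part~(4) is essentially the same in both: read off the block permutation, lift it to $K$, and divide; you spell out the lift via star transpositions while the paper invokes \Cref{elem polytime}.\ref{im of homom and preim of isom}, but the content is identical.
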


\begin{proof}
\Cref{K isom Sk}:
Let $\xi: K \rightarrow \Sy_k$ be the permutation representation of $K$ on $\mathcal{O}$. Then $\xi$ is surjective since $\overline{\phi_i}$ induces the permutation $(\Omega_1,\Omega_i)$ on $\mathcal{O}$. 
For injectivity, let $\kappa \in K$ be such that $\xi(\kappa)=1$.
Then $\kappa$ setwise stabilises each $\Omega_i$. 
Fix $i$, let $\alpha \in \Omega_i$, and consider $\kappa$ as a word in the $\overline{\phi_j}$. 
Since $\Omega_i$ is moved only by $\overline{\phi_i}$, if $\overline{\phi_i}$ does not occur in $\kappa$ then $\alpha$ is fixed by $\kappa$, so assume otherwise. 
As $\alpha^\kappa$ is also in $\Omega_i$, there exists a subword $\kappa'$ of $\kappa$ such that $\alpha^{\kappa} = \alpha^{\overline{\phi_i} \kappa' \overline{\phi_i}}$, where $\alpha^{\overline{\phi_i}}$ and $\alpha^{\overline{\phi_i} \kappa'}$ are in $\Omega_1$. 
%Now since the other $H$-orbit that $\overline{\phi_i}$ moves is $\Omega_1$, observe that $\alpha^{\overline{\phi_i} \kappa'} \in \Omega_1$. 
Now since each $\Omega_j$ is moved only by $\overline{\phi_j}$, 
%Since for all other $j$, the orbit $\Omega_j$ is moved only by $\overline{\phi_j}$, 
there exist (not necessarily distinct) integers $l_1, l_2, \ldots, l_r \in \{2,3, \ldots, k\} $ such that $(\alpha^{\overline{\phi_i}})^{\kappa'} = (\alpha^{\overline{\phi_i}})^{ \overline{\phi_{l_1}}^2 \overline{\phi_{l_2}}^2 \ldots \overline{\phi_{l_r}}^2 } = \alpha^{\overline{\phi_i}} $.
Therefore $\alpha^\kappa = \alpha$. \\
\Cref{item: L is wreath}:
We first show that $L$ is imprimitive with block system $\mathcal{O}$.
Let $\alpha \in \Omega_i$ and $ \beta \in \Omega_j$.
Then $\alpha^{\overline{\phi_i}}$ and $ \beta^{\overline{\phi_j}}$ are points in $\Omega_1$, so there exists $g \in G \leq L$ such that $\alpha^{\overline{\phi_i}g} = \beta^{\overline{\phi_j}}$, and hence $L$ is transitive. Since $B$ and $K$ preserve $\mathcal{O}$, it follows that $\mathcal{O}$ is a block system for $L$. 

Consider the kernel $J$ of the action of $L$ on $\mathcal{O}$.
Since the action of $K$ on $\mathcal{O}$ is faithful, $J \leq B$. 
Conversely, since each $N_i$ fixes $\mathcal{O}$, the group $B$ is a subgroup of $J$. 
So $J \cong N_1 \times N_2 \times \cdots \times N_k$, and hence, up to isomorphism, $L \leq N_1 \wr \Sy_k$. 
But as $\Sy_k \cong K \leq L$, it follows that $L \cong N_1 \wr \Sy_k$. \\
\Cref{norm of H in wreath}: The normaliser $N_{\Sy_n}(H)$ is permutation isomorphic to a subgroup of $N_1 \wr \Sy_k$ in its natural imprimitive action \cite[\S11]{hulpkeTransitive}, so this follows from \Cref{item: L is wreath}.  \\
\Cref{norm to BK}: 
The element $l$ induces a permutation $\sigma$ of the set $\mathcal{O}$ of orbits. 
Then $\kappa = \xi^{-1}(\sigma)$ can be computed in polynomial time by \Cref{elem polytime}.\ref{im of homom and preim of isom}, and $l \kappa^{-1}$ fixes each $H$-orbit, so is in $B$. 
\end{proof}

%%%%%%%%%%%%%%%%%%%%%%%%%%%%%%%%%%%%%%%%%%%%%%%%%%%%%%%%%%%%%%%%%%%%%%%%%%%%%%%%%%%%%%%%%%%%%%%%%

\section{\texorpdfstring{$\InP(\Cy_p)$}{InP(Cp)} and automorphisms of codes}
\label{section: Cpk}\label{subsection: group to code}

In this section, let $p$ be prime. 
For the rest of this section, we shall assume that the following hypothesis holds.

\begin{hypothesis}\label{defining H as subdir of  Cpk} \label{cpk hypo}
Let $n=pk$ and let $H$ be a subgroup of $\Sy_n$ in class $\mathfrak{InP}(\Cy_p)$. 
Let $\mathcal{O}$ $\coloneqq \{\Omega_1, \Omega_2, \ldots, \Omega_k \}$ be the orbits of $H$, ordered such that $|H| = |(H|_{\cup_{i \leq s}\Omega_i})| =  p^s$ for some $s$. 

Let $G$ be the enveloping group of $H$ and let $g_1$ be a permutation in $\Sym(\Omega_1)$ generating $G_1$. For $1 \leq i \leq k$, let $\phi_i: \Omega_1 \rightarrow \Omega_i$ witness the permutation isomorphism between $G_1$ and $G_i$, and let $g_i = g_1^{\overline{\phi_i}}$. 
\end{hypothesis}

We now set up an isomorphism $\gamma$ from $H$ to a linear code. Then, in \Cref{subsection: norm as aut}, we shall prove that computing $N_{\Sy_n}(H)$ is polynomial-time equivalent to computing the monomial automorphism group of $\gamma(H)$.

%----------------------------------------------------------

Denote the set of all $s \times k$ matrices over the field $\mathds{F}_p$ by $\mathrm{M}(s,k,p)$. 
For $M \in \mathrm{M}(s,k,p)$, we denote by $M_{i,*}$ and $M_{*,j}$ the $i$-th row and the $j$-th column of $M$ respectively. 
For a tuple $I$ of distinct elements of $\{1,2, \ldots, s\}$, we denote by $M_{I,*}$ the matrix of dimension $|I| \times k$ such that the $i$-th row of $M_{I,*}$ is $M_{I_i,*}$, and similarly for columns. 
We denote the row space of $M$ by $\langle M \rangle$.

A \emph{linear code} $C$ over $\mathds{F}_p$ is a subspace of $\mathds{F}_p^k$ for some $k$, denoted by $C \leq \mathds{F}_p^k$. 
A \emph{generator matrix} $M$ of $C\leq \mathds{F}_p^k$ is a matrix in $\mathrm{M}(s,k, p)$ for some $s$ whose rows form a basis for $C$. 
We shall assume that all linear codes are given by generator matrices.
The matrix $M$ is in \emph{standard form} if its first $s$ columns form the identity matrix $\mathrm{I}_s$. 
For more information on linear codes, refer to, for example, \cite{lintCoding}. 
Since throughout the paper we shall be moving between exponential and additive notation, we shall identify elements of $\mathds{F}_p$ with integers $\{0,1, \ldots, p-1 \}$. 
We denote by $\mathds{F}_p^*$ the multiplicative group of $\mathds{F}_p$.

\begin{definition} \label{def: mapping direct prod of cyclic group to field}
Let $\gamma$ be the isomorphism from $G$ to $\mathds{F}_p^k$ defined by
\[
\gamma(g_1^{r_1} g_2^{r_2} \ldots g_k^{r_k})= (\lists{r}{,}{k}) . 
\]
\end{definition}

Observe that $\gamma(H)$ is a subspace of $\gamma(G) = \mathds{F}_p^k$, so $\gamma(H)$ is linear code of length $k$ and dimension $s$ over $\mathds{F}_p$. 
Our assumption that $|(H|_{\cup_{i \leq s} \Omega_i})| = p^s$ means that no column permutations are needed to put a generator matrix of $\gamma(H)$ in standard form, so let $M$ be such a generator matrix.

We show that, in polynomial time, we can decide if a given subgroup of $\Sy_n$ is in $\mathfrak{InP}(\Cy_p)$, and if so compute some accompanying data.

\begin{lemma} \label{detect H in InP(Cp) in poly time}\label{gi and phii in poly time}
Let $Q = \langle Y \rangle$ be a subgroup of $\Sy_n$ with $k$ orbits. Then in polynomial time, we can decide if $Q \in \InP(\Cy_p)$. 
Furthermore, for a subgroup $H = \langle X \rangle$ of $\Sy_n$ in $\InP(\Cy_p)$, in polynomial time, we can compute: 
an ordering $\Omega_1, \Omega_2, \ldots, \Omega_k$ of $\mathcal{O}$, the bijections $\phi_i$ and the generators $g_i$ from \Cref{cpk hypo}; the isomorphism $\gamma$ from \Cref{def: mapping direct prod of cyclic group to field}; and a generator matrix $M \in \mathrm{M}(s,k,p)$ for $\gamma(H)$ in standard form. 
\end{lemma}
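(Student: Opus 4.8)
The plan is to establish the two claims of \Cref{gi and phii in poly time} in turn: first deciding membership in $\InP(\Cy_p)$, then computing the accompanying data. For the decision problem, I would begin by computing the orbits $\Omega_1, \ldots, \Omega_k$ of $Q$ and the restrictions $Q|_{\Omega_i}$, all in polynomial time by \Cref{elem polytime}.\ref{cent in sym}. Membership in $\InP(\Cy_p)$ requires precisely that each $Q|_{\Omega_i}$ is permutation isomorphic to the regular representation of $\Cy_p$. Since $A = \Cy_p$ acting regularly has degree $p$, a necessary first check is that every orbit has size exactly $p$; if not, reject. Next, for each $i$ I would verify that $Q|_{\Omega_i}$ is a transitive group of order $p$ on $p$ points: compute $|Q|_{\Omega_i}|$ (again polynomial by \Cref{elem polytime}.\ref{cent in sym}) and confirm it equals $p$ and acts transitively. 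A transitive group of prime order $p$ on $p$ points is automatically the cyclic regular representation, so these checks suffice to decide membership.

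For the second part, assuming $H \in \InP(\Cy_p)$, I would extract the required data constructively. For each $i$, a generator $g_i$ of $G_i = H|_{\Omega_i}$ is any nontrivial element of the cyclic group $Q|_{\Omega_i}$, obtainable in polynomial time. To build the bijections $\phi_i$ witnessing permutation isomorphism from $G_1$ to $G_i$, I would fix a base point $\delta_1 \in \Omega_1$ and $\delta_i \in \Omega_i$, then define $\phi_i(\delta_1^{g_1^r}) = \delta_i^{g_i^r}$; because each $G_j$ is regular, this is a well-defined bijection satisfying $\phi_i(\alpha^{g_1}) = \phi_i(\alpha)^{g_i}$, and the $g_i$ can then be replaced (or the $\phi_i$ adjusted) so that $g_i = g_1^{\overline{\phi_i}}$ as demanded by \Cref{cpk hypo}. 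The ordering of orbits required by \Cref{cpk hypo}, namely that $|H|_{\cup_{i \le s}\Omega_i}| = p^s$ for the appropriate $s$, I would obtain by greedily adding orbits one at a time and keeping only those that strictly increase the order of the restriction of $H$; the surviving orbits are reindexed to come first, and this is a polynomial number of order computations. With the $g_i$ and $\phi_i$ in hand, the isomorphism $\gamma$ of \Cref{def: mapping direct prod of cyclic group to field} is determined on generators, and computing $\gamma(x)$ for each generator $x \in X$ reduces to reading off, orbit by orbit, the power $r_i$ for which $x|_{\Omega_i} = g_i^{r_i}$.

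Finally, to produce the generator matrix $M$ in standard form, I would take the images $\gamma(x)$ for $x \in X$ as rows of a matrix over $\mathds{F}_p$ and row-reduce. The ordering chosen in the previous step guarantees that the leading columns fall in positions $1, \ldots, s$, so the reduced echelon form already has $\mathrm{I}_s$ in its first $s$ columns without any column permutation; this is exactly the content of the remark preceding the lemma that ``no column permutations are needed.'' Gaussian elimination over $\mathds{F}_p$ is polynomial in $k$ and $s$, completing the complexity bound.

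I expect the main obstacle to be bookkeeping rather than conceptual: namely, verifying that the greedy orbit-ordering genuinely yields an $s$ with $|H| = p^s$ and that the resulting echelon form is in standard form with no column swaps, and confirming that the recursive adjustment forcing $g_i = g_1^{\overline{\phi_i}}$ is consistent with the independently chosen generators $g_i$. The permutation-isomorphism check itself is delicate only in that one must argue a transitive group of prime order is forced to be the regular cyclic group; this is a standard fact, but it is the one place where the structure of $\Cy_p$ (as opposed to a general transitive $A$) is genuinely used to make the test efficient.
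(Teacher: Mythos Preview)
Your proposal is correct and follows essentially the same approach as the paper's proof: check that each orbit has size $p$ with restriction of order $p$, pick a nontrivial restriction as $g_i$, construct $\phi_i$ (the paper phrases this as ``conjugacy of permutations in symmetric groups''), then row-reduce the images $\gamma(x)$ to obtain $M$. Your treatment is in fact slightly more explicit than the paper's in two places---the greedy selection of orbits for the ordering and the base-point construction of $\phi_i$---but these are just spelled-out versions of what the paper asserts can be done in polynomial time.
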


\begin{proof}
The group $Q \in \InP(\Cy_p)$ if and only if for all $Q$-orbits $\Delta$, the size of $\Delta$ is $p$ and $|(Q|_{\Delta})|=p$. So we can decide if $Q \in \InP(\Cy_p)$ in polynomial time by \Cref{elem polytime}. 

The ordering of the $H$-orbits can be obtained in polynomial time by \Cref{elem polytime}.\ref{cent in sym}. 
Choices for $g_i$ can be computed in polynomial time since any non-trivial restriction $x|_{\Delta}$ of $x \in X$ generates $H|_{\Delta}$. 
Since conjugacy of permutations in symmetric groups can be computed in polynomial time, so can the bijections $\phi_i$. %\todo{technically not $\Sy_n$-conjugacy, it's $\Sym(\Omega_1 \cup \Omega_j)$, which is a mouthful, so I leave it as this.}
We construct $\gamma$ by mapping each $g_i$ to the $i$-th standard basis vector of $\mathds{F}_p^k$. Finally, $M$ can be obtained by finding a row reduced basis for $\langle \gamma(x) \mid x \in X \rangle$, in time polynomial in $s \in O(n)$ and $k \in O(n)$~\cite{linearAlgebra}. 
\end{proof}

%----------------------------------------------------------------------------

\subsection{The normaliser of \texorpdfstring{$H$}{H} as an automorphism group of a linear code}
\label{subsection: norm as aut}

In this subsection we prove \Cref{norm of our groups polytime equiv to code aut}, but first we define the actions of certain subgroups of $\mathrm{GL}_k(p)$ on $\gamma(H)$.  

Let $D$ and $P$ be the groups of all diagonal and  permutation matrices in $\mathrm{GL}_k(p)$, respectively, and let $W  \coloneqq  \langle D, P \rangle$.
The natural action of $W$ on $\mathds{F}_p^k$ is called the \emph{monomial action}.
Observe that $D \cong (\mathds{F}_{p}^*)^k$, $P \cong \Sy_k$ and $W  = D \rtimes P \cong {\mathds{F}_{p}^*} \wr \Sy_k$. 

\begin{definition} \label{defn: monomial aut}
Let $C\leq \mathds{F}_p^k$ be a linear code. 
The \emph{monomial automorphism group} $\MAut(C)$ of $C$ is the subgroup of $W$ that setwise stabilises $C$.
Two codes $C,C'\leq \mathds{F}_p^k$ are \emph{monomially equivalent} if $C w=C'$ for some $w \in W$  (we denote vector-matrix multiplication by concatenation). 
\end{definition}

Let $L = B \rtimes K$ be as in \Cref{normInWreath}. 
Observe that $N_{\Sy_p}(\Cy_p) = \mathrm{AGL}_1(p) \cong \Cy_p \rtimes \Cy_{p-1}$, so $L \cong (\Cy_p \rtimes \Cy_{p-1}) \wr \Sy_k$. We shall show that the conjugation action of $L/G \cong \Cy_{p-1} \wr \Sy_k$ on $G$ is equivalent to the action of $W$ on $\mathds{F}_p^k$, and hence show that computing $N_{\Sy_n}(H)$ is polynomial-time equivalent to computing $\MAut(\gamma(H))$. 

\begin{lemma}\label{conjActionToActionInVector}\label{mapping from $N$ to $(F_p^*)^k$} \label{notation: diag and perm}
Define a homomorphism $\rho : K \rightarrow P$ by $\rho(\kappa)_{i,j} =1$ if and only if $\Omega_i^{\kappa} = \Omega_j$. 
Let $\zeta : B \rightarrow D$ map
$b$ to $diag(d_1, d_2, \ldots, d_k)$ where $b^{-1} g_ib  = g_i^{d_i}$  for $1 \leq i \leq k$. 
Define a map $\Xi : L  \rightarrow W $ by writing each $l \in L$ as $b\kappa$ for some $b \in B$ and $\kappa \in K$ and mapping 
\[l = b\kappa  \mapsto \zeta(b) \rho(\kappa). 
\]
Then the following statements hold. 
\begin{enumerate}
    {\item \label{xi is epi with kernel $G$}$\Xi$ is an epimorphism with kernel $G$.}
   \item \label{equiv action}  $\gamma(g^{l}) = \gamma(g){\Xi(l)}$ for all $g \in G$ and $l\in L$. 
    \item \label{induces homom of norm and kernel} $\Xi(N_{\Sy_n}(H)) = \MAut(\gamma(H))$, so $N_{\Sy_n}(H)$ is the full pre-image of $ \MAut(\gamma(H))$ under $\Xi$. 
    \item \label{can compute im and preim of Xi} Given $l \in L$ and $w \in W$, we can compute $\Xi(l)$ and a preimage of $w$ under $\Xi$, in time polynomial in $n$. 
\end{enumerate}
\end{lemma}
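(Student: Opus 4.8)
The plan is to exploit the semidirect decomposition $L = B \rtimes K$ coming from \Cref{normInWreath}, which makes the factorisation $l = b\kappa$ (and hence $\Xi$) well defined, and then to reduce everything to two elementary facts describing how $B$ and $K$ act by conjugation on the chosen generators $g_1, \ldots, g_k$ of $G$. First I would record these. For $b \in B$, the definition of $\zeta$ gives $b^{-1} g_i b = g_i^{d_i}$ with $\zeta(b) = \mathrm{diag}(d_1, \ldots, d_k)$, since $b$ stabilises each orbit and only its $\Omega_i$-component acts nontrivially on $g_i$. The crux is the companion fact for $K$: if $\kappa \in K$ induces the permutation $\sigma$ of orbit indices (so $\Omega_i^\kappa = \Omega_{i^\sigma}$), then $\kappa^{-1} g_i \kappa = g_{i^\sigma}$ \emph{exactly}, with no stray power. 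I would prove this on the generators $\overline{\phi_j}$ of $K$: as $\overline{\phi_j}$ is an involution supported on $\Omega_1 \cup \Omega_j$ and $g_j = g_1^{\overline{\phi_j}}$, conjugation by $\overline{\phi_j}$ fixes $g_i$ for $i \notin \{1,j\}$ and swaps $g_1 \leftrightarrow g_j$, so it permutes $\{g_1, \ldots, g_k\}$ exactly as the transposition $(1\ j)$; the general case follows by writing $\kappa$ as a word in the $\overline{\phi_j}$ and composing. This is the step I expect to be the main obstacle, as everything else is bookkeeping once the $g_i$ are permuted cleanly.

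With these two facts I would prove the action statement (part 2) directly, before establishing that $\Xi$ is a homomorphism. Writing an arbitrary $g \in G$ as $g = g_1^{r_1} \cdots g_k^{r_k}$, so $\gamma(g) = (r_1, \ldots, r_k)$, conjugation by $b \in B$ gives $g^b = g_1^{d_1 r_1} \cdots g_k^{d_k r_k}$, whence $\gamma(g^b) = \gamma(g)\,\zeta(b)$; conjugation by $\kappa \in K$ gives $g^\kappa = \prod_i g_{i^\sigma}^{r_i}$, whence $\gamma(g^\kappa) = \gamma(g)\,\rho(\kappa)$ by the definition of the permutation matrix $\rho(\kappa)$. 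For a general $l = b\kappa$ I then apply these in turn, $\gamma(g^l) = \gamma((g^b)^\kappa) = \gamma(g^b)\rho(\kappa) = \gamma(g)\zeta(b)\rho(\kappa) = \gamma(g)\Xi(l)$, giving part 2 over all of $L$.

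Part 1 would then follow quickly. For any $l_1, l_2 \in L$ and all $g \in G$, part 2 applied twice gives $\gamma(g)\Xi(l_1 l_2) = \gamma(g^{l_1 l_2}) = \gamma(g)\Xi(l_1)\Xi(l_2)$; since $\gamma$ is onto $\mathds{F}_p^k$ and $W \leq \mathrm{GL}_k(p)$ acts faithfully on $\mathds{F}_p^k$, this forces $\Xi(l_1 l_2) = \Xi(l_1)\Xi(l_2)$, so $\Xi$ is a homomorphism. Surjectivity follows since $\zeta(B) = D$ (as $N_i/G_i \cong \Aut(\Cy_p) \cong \mathds{F}_p^*$) and $\rho(K) = P$ (the $\rho(\overline{\phi_j})$ are the transposition matrices, which generate $P$), so $\mathrm{Im}(\Xi) \supseteq \langle D, P \rangle = W$. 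For the kernel, $\Xi(l) = 1$ in $W = D \rtimes P$ forces $\rho(\kappa) = 1$ and $\zeta(b) = 1$; the former gives $\kappa = 1$ since $K$ acts faithfully on the orbits by \Cref{normInWreath}, and the latter gives $b_i \in C_{\Sym(\Omega_i)}(G_i) = G_i$ for each $i$, so $l = b \in G$; conversely $G \leq B$ centralises each $g_i$, so $G \leq \Ker \Xi$. Hence $\Ker \Xi = G$.

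For part 3, I would first note $G \trianglelefteq L$ (each $G_i \trianglelefteq N_i$, and $K$ permutes the $G_i$ by the crux fact), so for $l \in L$ we have $H^l \leq G$ and $\gamma(H^l) = \gamma(H)\,\Xi(l)$ as subsets of $\mathds{F}_p^k$. Since $\gamma$ is a bijection, $l \in N_{\Sy_n}(H)$ if and only if $\gamma(H)\Xi(l) = \gamma(H)$, that is, if and only if $\Xi(l) \in \MAut(\gamma(H))$. Combining this equivalence with $N_{\Sy_n}(H) \leq L$ from \Cref{normInWreath} and the surjectivity of $\Xi$ yields both $\Xi(N_{\Sy_n}(H)) = \MAut(\gamma(H))$ and that $N_{\Sy_n}(H)$ is the full preimage. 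Finally, for part 4, given $l$ I would use \Cref{normInWreath} to compute the factorisation $l = b\kappa$ in polynomial time, read off $\sigma$ (hence $\rho(\kappa)$) from the induced action on orbits, and obtain each $d_i$ by comparing $g_i$ with the conjugate $b^{-1}g_i b$ on a point of $\Omega_i$ (at most $p \leq n$ iterations); for a preimage of $w = \mathrm{diag}(d_1,\ldots,d_k)\,\rho(\kappa) \in W$, I would realise each $d_i$ by some $b_i \in N_i$ and take a preimage of the permutation part under $\rho$, appealing to \Cref{elem polytime} for the homomorphism image and pre-image computations. All steps are polynomial in $n$.
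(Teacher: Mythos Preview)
Your proposal is correct, and for parts 2--4 it tracks the paper closely (including the ``crux fact'' $g_i^{\kappa} = g_{i^\sigma}$, which the paper also isolates). The genuine difference is in part~1. The paper establishes that $\Xi$ is a homomorphism by a direct coordinate computation: it expands $\Xi(b_1\kappa_1 b_2\kappa_2) = \zeta(b_1 b_2^{\kappa_1^{-1}})\rho(\kappa_1\kappa_2)$ and then verifies $\zeta(b_2^{\kappa_1^{-1}}) = \rho(\kappa_1)\zeta(b_2)\rho(\kappa_1)^{-1}$ entry by entry, only afterwards proving part~2. You invert the order: you first prove the equivariance $\gamma(g^l) = \gamma(g)\,\Xi(l)$ for the \emph{function} $\Xi$ (well defined by the unique $B\rtimes K$ factorisation), and then deduce multiplicativity from $\gamma(g)\,\Xi(l_1 l_2) = \gamma(g^{l_1 l_2}) = \gamma(g)\,\Xi(l_1)\Xi(l_2)$ together with the faithfulness of $W$ on $\mathds{F}_p^k$. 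This is slicker and entirely avoids the paper's coordinate check; it does, however, silently use $G \trianglelefteq L$ (so that $g^{l_1} \in G$ and part~2 applies again), a fact you only state later when discussing part~3. Move that observation up and the argument is clean.
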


\begin{proof}
{\Cref{xi is epi with kernel $G$}:} Let $b_1, b_2 \in B$ and $\kappa_1, \kappa_2 \in K$.
Then \[\Xi(b_1 \kappa_1 b_2 \kappa_2) = \Xi(b_1 b_2^{\kappa_1^{-1}} \kappa_1 \kappa_2) = \zeta(b_1) \zeta(b_2^{\kappa_1^{-1}})\rho(\kappa_1) \rho(\kappa_2), \]
so to show that $\Xi$ is a homomorphism, it suffices to show that 
$v \rho(\kappa_1) \zeta(b_2){\rho(\kappa_1)^{-1}} = v\zeta(b_2^{\kappa_1^{-1}}) $ for all $v \in \mathds{F}_p^k$. 
For $i \in \{1,2, \ldots, k\}$, let $\Omega_j = \Omega_i^{\kappa_1}$, and let $d_j = \zeta(b_2)_j$.
Then 
\[ 
(v \rho(\kappa_1) \zeta(b_2){\rho(\kappa_1)^{-1}})_{i} = (v \rho(\kappa_1) \zeta(b_2))_{j} = (v \rho(\kappa_1))_{j}d_j = v_{i}d_j.
\]
Since $g_i^{\kappa_1 b_2 \kappa_1^{-1}} = g_j^{b_2 \kappa_1^{-1}} = g_j^{d_j \kappa_1^{-1}} =  g_i^{d_j}$, 
\[
(v \zeta(b_2^{\kappa_1^{-1}}))_{i} = v_{i} (\zeta(b_2^{\kappa_1^{-1}}))_i = v_{i} d_j. 
\]
So $\zeta(b_2^{\kappa_1^{-1}}) = \zeta(b_2)^{\rho(\kappa_1)^{-1}}$, 
and hence $\Xi$ is a homomorphism. 
Since $\zeta$ is an epimorphism, so is $\Xi$. 
Finally, as $\mathrm{Ker}(\rho)$ is trivial, $\Ker(\Xi) = \Ker(\zeta) = G$. \\
\Cref{equiv action}: 
Let $l= b \kappa$ with $b \in B$ and $\kappa \in K$. 
Then there exist $r_i \in \mathds{F}_p$ and $d_i \in \mathds{F}_p^*$ such that $g|_{\Omega_i} = g_i^{r_i}$ and $g_i^{b} = g_i^{d_i}$. 
Let $\Omega_j = \Omega_i^{\kappa} $. 
Then $(g^{l})|_{\Omega_j} =  ( (g^{b})|_{\Omega_i} )^{\kappa} =  (g_i^{r_id_i})^\kappa$. 
Recall the involutions $\overline{\phi_i} \in \Sym(\Omega_1 \cup \Omega_i)$ from \Cref{defining H as subdir of  Cpk}. 
Since $K$ is generated by the $\overline{\phi_i}$ and conjugation by ${\overline{\phi_i}}$ swaps $g_1$ and $g_i$, it follows that $g_i^{\kappa} = g_j$.
So $(g^{l})|_{\Omega_j} = g_j^{r_id_i}$ and $\gamma(g^{l})_j = r_id_i$. Hence for all $j$, 
\[ 
(\gamma(g){\Xi(l)})_j  
= (\gamma(g){\zeta(b)\rho(\kappa) })_j  
= (\gamma(g){\zeta(b)})_i = r_id_i = \gamma(g^{l})_j.  
\]
\Cref{induces homom of norm and kernel}: 
First note that $N_{\Sy_n}(H) \leq L$ by \Cref{normInWreath}.\ref{norm of H in wreath}. 
By \Cref{equiv action}, if $l \in L$ is in $N_{\Sy_n}(H)$, then $\gamma(h)\Xi(l) \in \gamma(H)$ for all $h \in H$ and so $\Xi(l) \in \MAut(\gamma(H))$. 
Conversely for $w \in \MAut(\gamma(H))$, by \Cref{xi is epi with kernel $G$}, there exists an $l \in L$ such that $\Xi(l)= w$. Then $\gamma(h^{l}) = \gamma(h){w} \in \gamma(H)$ for all $h \in H$. So $l \in N_{\Sy_n}(H)$.  \\
\Cref{can compute im and preim of Xi}: By \Cref{normInWreath}.\ref{norm to BK}, in polynomial time, we can find $b \in B$ and $\kappa \in K$ such that $l=b\kappa$. 
For $1 \leq i \leq k$, we can find $d_i \leq p-1$ such that $b^{-1}g_ib = g_i^{d_i}$, and $j$ such that $\Omega_i^{\kappa} = \Omega_j$. As $k \leq n$, the images $\zeta(b)$ and $\rho(\kappa)$, and hence $\Xi(l)$, can be computed in time polynomial in $n$. 

To show that we can find an element with $\Xi$-image $w$ in polynomial time, for $1 \leq i \leq k$, let $d_i$ be the non-zero entry of $w_{i,*}$, and let $d = diag(d_1, d_2, \ldots, d_k)$. So in time polynomial in $k\log{p} \in O(n)$, we can compute $d \in D$ and $q \coloneqq  d^{-1} w \in P$ such that $w= d q$.

Now we find an element $\sigma$ of $\Sy_n$ such that $\sigma^{-1}g_i\sigma = g_i^{d_i}$, in time polynomial in $n$ since the $g_i$ have disjoint supports and conjugation in $\Sym(\Omega_i)$ can be solved in time polynomial in $|\Omega_i|$. Then $\sigma \in B$ and $\zeta(\sigma) = d \in D$.
Next, in time polynomial in $k$, we construct the element $\sigma$ of $\Sy_k$ such that the image $i^\sigma$ is the position of the non-zero entry in $q_{i,*}$. 
Letting $\xi$ be as in the proof of \Cref{define and analyse L}.\ref{K isom Sk}, $\kappa \coloneqq \xi^{-1}(\sigma)$ is the element of $K$ with $\rho(\kappa) = q$, which can be computed in time polynomial in $n$ by \Cref{elem polytime}.\ref{im of homom and preim of isom}. Therefore, in time polynomial in $n$, we can compute an element $\sigma \kappa$ of $L$ with $\Xi$-image $w$. 
\end{proof}

Finally we prove \Cref{norm of our groups polytime equiv to code aut}. 

\begin{proof} [Proof of \Cref{norm of our groups polytime equiv to code aut}]
To reduce \textsc{Norm-Sym} for groups $H \leq \Sy_{pk}$ in class $\InP(\Cy_p)$ to \textsc{MAut}, first notice that by \Cref{gi and phii in poly time}, in polynomial time, we can compute the enveloping group $G$ of $H$ and an isomorphism $\gamma: G \rightarrow \mathds{F}_p^k$.
Assume that we can compute a generating set $Y$ for $\MAut(\gamma(H))$ in time polynomial in $k$. 
Then by \Cref{conjActionToActionInVector}, $N_{\Sy_n}(H) = \langle \{ \Xi^{-1}(y) \mid y \in Y\}, G \rangle$, where each $\Xi^{-1}(y)$ denotes a pre-image of $y$ under $\Xi$.

For the backward reduction, let $C\leq \mathds{F}_p^{k}$ be a linear code given by a generator matrix $M \in \mathrm{M}(s,k, p)$.
Let $g_i=(p(i-1)+1, \ldots , pi) \in \Sy_{pk}$ for $1 \leq i \leq k$ and let $G = \langle g_1, g_2, \ldots, g_k \rangle$. 
Let $\gamma$ be as in \Cref{def: mapping direct prod of cyclic group to field}
and let $H = \langle \gamma^{-1}(M_{i,*}) \mid 1 \leq i \leq s \rangle $.
Assume that we can compute a generating set $Y$ for $N_{\Sy_{pk}}(H)$ in time polynomial in $k$. 
Then by \Cref{conjActionToActionInVector}.\ref{induces homom of norm and kernel}, $\MAut(C) = \Xi(N_{\Sy_{pk}}(H)) = \langle \Xi(y) \mid y \in Y\rangle$, which can be computed in time polynomial in $k$ by \Cref{conjActionToActionInVector}.\ref{can compute im and preim of Xi}.  

For the equivalence of \textsc{Conj-Sym} for groups in class $\InP(\Cy_p)$ and  \textsc{MEq} for codes over $\mathds{F}_p$, 
let $H_1$ and $H_2$ be subgroups of $\Sy_{pk}$ and let $C_1 = \gamma(H_1)$ and $C_2 = \gamma(H_2)$ be codes of length $k$ over $\mathds{F}_p$. It follows from \Cref{conjActionToActionInVector}.\ref{xi is epi with kernel $G$}--\ref{equiv action} that $H_1$ and $H_2$ are conjugate in $\Sy_n$ if and only if $C_1$ and $C_2$ are monomially equivalent. The rest of the proof is similar to that of the equivalence of \textsc{Norm-Sym} and \textsc{MAut}. 
\end{proof}

\begin{corollary} \label{cor: rel complexity}
Fix a prime $p$. Denote \textsc{MAut} for $p$-ary codes by $\textsc{MAut}_p$. Then the graph isomorphism problem is polynomial-time reducible to $\textsc{MAut}_p$, which is polynomial-time reducible to \textsc{Norm-Sym}. 
\end{corollary}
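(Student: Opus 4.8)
The plan is to assemble \Cref{cor: rel complexity} from results already established in the excerpt, treating it as an essentially immediate corollary of \Cref{norm of our groups polytime equiv to code aut} combined with the known reductions cited in the introduction. The statement bundles two reductions: first, graph isomorphism reduces in polynomial time to $\textsc{MAut}_p$; second, $\textsc{MAut}_p$ reduces in polynomial time to \textsc{Norm-Sym}. I would prove each reduction separately.

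For the second reduction, I would invoke \Cref{norm of our groups polytime equiv to code aut} directly. That theorem establishes that \textsc{Norm-Sym} for groups in $\InP(\Cy_p)$ is polynomial-time \emph{equivalent} to $\textsc{MAut}_p$; in particular $\textsc{MAut}_p$ reduces in polynomial time to \textsc{Norm-Sym} for this restricted class of groups, and since $\InP(\Cy_p) \subseteq \Sy_n$, an instance of \textsc{Norm-Sym} for $H \in \InP(\Cy_p)$ is simply a (special) instance of the general \textsc{Norm-Sym} problem. Hence $\textsc{MAut}_p$ reduces in polynomial time to \textsc{Norm-Sym}. The backward reduction constructed in the proof of \Cref{norm of our groups polytime equiv to code aut} is exactly what produces, from a code $C$ over $\mathds{F}_p$, a group $H \in \InP(\Cy_p)$ whose normaliser encodes $\MAut(C)$, so no new construction is needed.

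For the first reduction, I would combine two facts cited earlier. Petrank and Roth showed that \textsc{MEq} over $\mathds{F}_p$ is at least as hard as graph isomorphism, i.e.\ graph isomorphism reduces in polynomial time to \textsc{MEq}. To pass from \textsc{MEq} to \textsc{MAut}, I would use the standard observation that deciding monomial equivalence of two codes reduces to computing monomial automorphism groups (the usual ``search-to-decision'' or coset argument: form a combined code or apply a fixed monomial transformation to one code and test membership via the automorphism group of a suitable direct sum, so that $C_1$ and $C_2$ are monomially equivalent if and only if $\MAut$ of the combined object acts transitively on the two summands). This yields graph isomorphism $\le_P \textsc{MEq} \le_P \textsc{MAut}_p$, giving the first chain. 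Chaining the two reductions through $\textsc{MAut}_p$ then gives the full statement, with transitivity of polynomial-time reductions closing the argument.

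The only genuine subtlety — and the step I would flag as the main obstacle — is the \textsc{MEq}-to-\textsc{MAut} reduction, since the theorem quoted gives hardness in terms of \textsc{MEq} whereas the corollary is stated for \textsc{MAut}. I would make this reduction explicit enough to be convincing: given codes $C_1, C_2 \le \mathds{F}_p^k$, construct an interleaved or concatenated code of length $2k$ whose monomial automorphism group, computed once, reveals whether a monomial map sends $C_1$ to $C_2$, so that a single \textsc{MAut} oracle call decides \textsc{MEq}. Everything else is a direct appeal to \Cref{norm of our groups polytime equiv to code aut} and to the cited work \cite{codeEquivGI}, together with transitivity of polynomial-time many-one (or Turing) reductions.
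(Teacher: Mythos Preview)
Your proposal is correct and follows essentially the same route as the paper: cite \cite{codeEquivGI} for GI $\le_P$ \textsc{MEq}, pass from \textsc{MEq} to $\textsc{MAut}_p$, and then invoke \Cref{norm of our groups polytime equiv to code aut} together with the observation that \textsc{Norm-Sym} for $\InP(\Cy_p)$ is a special case of \textsc{Norm-Sym}. The paper's proof is two sentences and handles the step \textsc{MEq} $\le_P$ $\textsc{MAut}_p$ simply by citing \Cref{norm of our groups polytime equiv to code aut}, whereas you (rightly) flag that this step is not literally contained in the statement of that theorem and propose to supply it via the standard direct-sum construction. Either justification works: via the theorem, \textsc{MEq} is equivalent to \textsc{Conj-Sym} for $\InP(\Cy_p)$, and forming $H_1 \times H_2 \in \InP(\Cy_p)$ on $2pk$ points reduces the latter to a single \textsc{Norm-Sym} call (check whether $N_{\Sy_{2pk}}(H_1\times H_2)$ swaps the two halves), which is equivalent to your direct-sum-of-codes argument under $\gamma$. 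So your version is slightly more explicit than the paper's, but not genuinely different.
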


\begin{proof}
The graph isomorphism problem is polynomial-time reducible to \textsc{MEq} for $p$-ary codes \cite{codeEquivGI}, which is polynomial-time reducible to $\textsc{MAut}_p$ by \Cref{norm of our groups polytime equiv to code aut}. The result now follows as \textsc{Norm-Sym} for $\InP(\Cy_p)$ is a special case of \textsc{Norm-Sym}. 
\end{proof}

%%%%%%%%%%%%%%%%%%%%%%%%%%%%%%%%%%%%%%%%%%%%%%%%%%%%%%%%%%%%%%%%%%%%%%%%%%%%%%%%%%%%%%%%%%%%%%%%%

\section{Complexity results}
\label{section: reduce tree}

Let $H$ be as in \Cref{cpk hypo}, and let $L= B \rtimes K$ be as in \Cref{normInWreath}. 
In \Cref{subsection: limit depth}, we show that $N_{\Sy_n}(H)$ can be computed in time $2^{O(\frac{n}{2p} \log{n} )}$. 
In \Cref{subsection: canon im}, we show that for each $\kappa \in K$, if there exists $b \in B$ such that $b \kappa \in N_{\Sy_n}(H)$, then we can find such a $b$ in polynomial time. 
In \Cref{subsection: normFixingOrbs}, we first show that $N_{B}(H)$ can be computed in polynomial time, then we see how the results in this section come together to reduce the search space for $N_{\Sy_n}(H)$ to searching in $K \cong \Sy_k$ instead of $L \cong (\Cy_p \rtimes \Cy_{p-1}) \wr \Sy_k$, and hence prove \Cref{main theorem - cpk}.

%--------------------------------------------------------
\subsection{Limiting the depth of the search tree}
\label{subsection: limit depth}\label{defn: dual code}

In this subsection, we show how we can reduce the depth of the search tree using a possibly smaller group $H^{\bot}$, which we define now. 

The \emph{dual} code $C^{\bot}$ of a code $C\leq \mathds{F}_p^k$ is the subspace $\{ v \in \mathds{F}_p^k \mid  v \cdot c = 0 \text{ for all } c \in C \}$, where~`$\cdot$' denotes the standard dot product. 
Let $H^{\bot} \leq \Sy_n$ be $\gamma^{-1}(\gamma(H)^{\bot})$.
We shall show that there exists a bijection between $N_{\Sy_n}(H)$ and $N_{\Sy_n}(H^{\bot})$.
Recall from \Cref{normInWreath} that $N_{\Sy_n}(H) \leq L$.

\begin{lemma}  \label{search in dual}
Let $b \in B$ and $\kappa \in K$.
Then $b\kappa \in N_{\Sy_n}(H)$ if and only if  $b^{-1} \kappa \in N_{\Sy_n}(H^{\bot})$.
\end{lemma}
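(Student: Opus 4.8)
The plan is to push the entire statement through the isomorphism $\gamma$ and the epimorphism $\Xi$ of \Cref{conjActionToActionInVector}, reducing it to the standard fact that dualisation of codes intertwines the monomial action of $w$ with that of its inverse-transpose. Write $w \coloneqq \Xi(b\kappa) = \zeta(b)\rho(\kappa)$ and $w' \coloneqq \Xi(b^{-1}\kappa)$. Since $\zeta$ is a homomorphism by \Cref{conjActionToActionInVector}.\ref{xi is epi with kernel $G$}, we have $\zeta(b^{-1}) = \zeta(b)^{-1}$, so $w' = \zeta(b)^{-1}\rho(\kappa)$.

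First I would recast both membership conditions as conditions on codes. By \Cref{conjActionToActionInVector}.\ref{induces homom of norm and kernel}, $b\kappa \in N_{\Sy_n}(H)$ if and only if $\gamma(H)w = \gamma(H)$. For the dual side, $H^{\bot} = \gamma^{-1}(\gamma(H)^{\bot}) \leq G$, and since $G = \Ker(\Xi)$ is normal in $L$ while $b^{-1}\kappa \in L$, the conjugate $(H^{\bot})^{b^{-1}\kappa}$ again lies in $G$; applying $\gamma$ and using the action-equivalence $\gamma(g^{l}) = \gamma(g)\Xi(l)$ of \Cref{conjActionToActionInVector}.\ref{equiv action}, together with the invertibility of $w'$, shows that $b^{-1}\kappa \in N_{\Sy_n}(H^{\bot})$ if and only if $\gamma(H)^{\bot} w' = \gamma(H)^{\bot}$.

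The crux is the identity $w' = (w^{-1})^{\mathsf T}$, i.e.\ $w'$ is the inverse-transpose of $w$. This holds because $\zeta(b)$ is diagonal and $\rho(\kappa)$ is a permutation matrix: from $w = \zeta(b)\rho(\kappa)$ we get $(w^{-1})^{\mathsf T} = (\rho(\kappa)^{-1}\zeta(b)^{-1})^{\mathsf T} = \zeta(b)^{-1}(\rho(\kappa)^{-1})^{\mathsf T} = \zeta(b)^{-1}\rho(\kappa) = w'$, using that diagonal matrices are symmetric and that $(\rho(\kappa)^{-1})^{\mathsf T} = \rho(\kappa)$ for a permutation matrix. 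I would then record the general fact that for any invertible $w \in \mathrm{GL}_k(p)$ and any code $C \leq \mathds{F}_p^k$, one has $(Cw)^{\bot} = C^{\bot}(w^{-1})^{\mathsf T}$; this follows from the adjunction $(cw)\cdot u = c \cdot (u w^{\mathsf T})$, whence $u \in (Cw)^{\bot}$ if and only if $uw^{\mathsf T} \in C^{\bot}$.

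Combining these, with $C \coloneqq \gamma(H)$: $b\kappa \in N_{\Sy_n}(H)$ iff $Cw = C$ iff $(Cw)^{\bot} = C^{\bot}$ (dualisation is an inclusion-reversing involution on subspaces, hence a bijection, so $(C^{\bot})^{\bot} = C$) iff $C^{\bot}(w^{-1})^{\mathsf T} = C^{\bot}$ iff $C^{\bot} w' = C^{\bot}$ iff $b^{-1}\kappa \in N_{\Sy_n}(H^{\bot})$, as required. I do not expect a genuine obstacle: the only points needing care are the transpose/inverse bookkeeping in the identity $w' = (w^{-1})^{\mathsf T}$, and the fact that taking duals is an involution, which is exactly what makes the biconditional symmetric in $H$ and $H^{\bot}$.
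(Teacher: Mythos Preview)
Your argument is correct and is essentially the same idea as the paper's, but packaged more conceptually. The paper proves the forward direction by an explicit coordinate computation: for $\eta\in H^{\bot}$ and $h\in H$ it writes out $\gamma(h)\cdot\gamma(\eta^{b^{-1}\kappa})$, uses the cancellation $d_id_i^{-1}=1$ to show $\gamma(g)\zeta(b)\cdot\gamma(\eta)\zeta(b^{-1})=\gamma(g)\cdot\gamma(\eta)$, and then observes that permutation matrices preserve the dot product; the converse is obtained from $(H^{\bot})^{\bot}=H$. You abstract exactly these two observations into the single matrix identity $\Xi(b^{-1}\kappa)=(w^{-1})^{\mathsf T}$ (diagonal matrices are symmetric, permutation matrices orthogonal) together with the general duality formula $(Cw)^{\bot}=C^{\bot}(w^{-1})^{\mathsf T}$, and then run the biconditional through $\gamma$ and $\Xi$. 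Your version makes the underlying reason---that dualisation intertwines the monomial action with its inverse-transpose---transparent, whereas the paper's elementwise calculation is closer to first principles; but the content is the same. Your care in justifying the equivalence $b^{-1}\kappa\in N_{\Sy_n}(H^{\bot})\iff \gamma(H)^{\bot}w'=\gamma(H)^{\bot}$ directly from \Cref{conjActionToActionInVector}.\ref{equiv action} (rather than reinvoking part~\ref{induces homom of norm and kernel} for $H^{\bot}$) is appropriate, since $H^{\bot}$ need not itself lie in $\InP(\Cy_p)$.
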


\begin{proof}
Let the epimorphism $\Xi: L \rightarrow W$ be as in \Cref{conjActionToActionInVector}.\\
$\Rightarrow$: 
Assume that $l=b\kappa \in N_{\Sy_n}(H)$ and let $\eta \in H^{\bot}$.
Let $\zeta(b ) = diag(d_1, d_2, \ldots, d_k) \in D$. 
We show that $\eta^{b^{-1}  \kappa} \in H^{\bot}$ by showing that $\gamma(h) \cdot  \gamma(\eta^{b^{-1} \kappa}) = 0$ for all $h \in H$, so let $h \in H$ and $g \coloneqq h^{l^{-1}} \in H$. 
Then 
\[
 0 = \gamma(g) \cdot \gamma(\eta) = \sum\nolimits_{i=1}^{k} \gamma(g)_i \gamma(\eta)_i =  \sum\nolimits_{i=1}^{k} \gamma(g)_i d_i \gamma(\eta)_i d_i^{-1} = \gamma(g){\zeta(b)} \cdot \gamma(\eta){\zeta(b^{-1})}.   
\]
Since $P$ permutes the coordinates of $\mathds{F}_p^k$, the product
$\gamma(g){\Xi(l) } \cdot \gamma(\eta){\Xi(b^{-1} \kappa)}$ is also $0$. 
Now by {\Cref{conjActionToActionInVector}.\ref{equiv action},} 
\[ 
\gamma(h) \cdot  \gamma(\eta^{b^{-1} \kappa}) 
= \gamma(g^{l}) \cdot \gamma(\eta^{b^{-1} \kappa})
= \gamma(g){\Xi(l) } \cdot \gamma(\eta){\Xi(b^{-1} \kappa)}
= 0. 
\]
$\Leftarrow$: The fact that $(H^{\bot})^{\bot} = H$ implies that $b \kappa = (b^{-1})^{-1} \kappa \in N_{\Sy_n}(H)$. 
\end{proof}

Now we prove a more precise version of the upper bound in \Cref{main theorem - cpk}.

\begin{proposition} \label{limit depth by base size}
Let $H = \langle X \rangle$ be a subgroup of $\Sy_n$ in class $\InP(\Cy_p)$, with $n=pk$ and $|H| = p^s$, and let $m  \coloneqq  \mathrm{min}\{ s, k- s\}$. 
Then $N_{\Sy_n}(H)$ can be computed in time $2^{O((m+1) \log{n} )}$. 
Hence $N_{\Sy_n}(H)$ can be computed in time $2^{O(\frac{n}{2p} \log{n})}$.
%\todo{$2^{O((m+1) \log{n} )}$ vs $2^{O(m\log{n} )}$? }
\end{proposition}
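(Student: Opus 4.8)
The plan is to reduce the computation to a backtrack search that branches only over the images of a base of $H$, whose size is $s$, and then to exploit the duality of \Cref{search in dual} to replace $s$ by $\min\{s,k-s\}=m$. Throughout I would work with the linear code $C \coloneqq \gamma(H)$, an $[k,s]$ code with a generator matrix $M=[\mathrm{I}_s \mid A]$ in standard form (as produced in \Cref{gi and phii in poly time}); by \Cref{conjActionToActionInVector} it suffices to compute $\MAut(C)$ and pull back through $\Xi$, since both directions of that correspondence cost only polynomial time.

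First I would set up the backtrack search for $\MAut(C)=\Xi(N_{\Sy_n}(H))$ so that its search tree has depth $s$. A monomial automorphism $w=dq$ (with $d$ diagonal and $q$ a permutation matrix) is pinned down on the information set $\{1,\dots,s\}$ by the target positions $\{1,\dots,s\}^{q}$ together with the $s$ scalars $d_1,\dots,d_s$; these are exactly the data recorded by the images, under $\sigma\in N_{\Sy_n}(H)$, of a base $(\beta_1,\dots,\beta_s)$ of $H$ with one point in each of the orbits $\Omega_1,\dots,\Omega_s$, together with the multiplier in $N_{\Sy_p}(\Cy_p)=\mathrm{AGL}_1(p)$ acting on each such orbit. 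A base of this form exists and is found in polynomial time because $|H|=p^{s}$ and each information orbit drops the order of the relevant stabiliser by a factor of $p$. The search therefore branches over at most $k$ positions and $p-1$ scalars at each of the $s$ levels, i.e.\ over at most $(k(p-1))^{s}\le n^{s}$ leaves.

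The technical heart, and the step I expect to be the main obstacle, is to show that each leaf can be processed in polynomial time: once the images of the $s$ information positions and their scalars are fixed, the induced linear automorphism of $C$ is determined, and hence so is the action of $w$ on every remaining coordinate. Concretely, because $M$ is in standard form, each coordinate functional $j\notin\{1,\dots,s\}$ is a fixed linear combination of the information functionals; requiring $w$ to be monomial and to stabilise $C$ then forces the target position $j^{q}$ and the scalar $d_j$, which can be recovered, or shown not to exist, by linear algebra over $\mathds{F}_p$. Thus each partial assignment extends to at most one genuine monomial automorphism, verifiable in time polynomial in $n$, while pruning via \Cref{prune by stabs or projections} discards the inconsistent branches. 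Collecting the surviving elements, pulling them back through $\Xi^{-1}$ (polynomial by \Cref{conjActionToActionInVector}), and adjoining generators of the kernel $G$ yields $N_{\Sy_n}(H)$ in time $n^{s}\cdot\mathrm{poly}(n)=2^{O((s+1)\log n)}$.

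Finally I would invoke \Cref{search in dual}: the map $b\kappa\mapsto b^{-1}\kappa$ is an isomorphism of $L$ carrying $N_{\Sy_n}(H)$ onto $N_{\Sy_n}(H^{\bot})$, where $H^{\bot}\in\InP(\Cy_p)$ has order $p^{k-s}$ and $(H^{\bot})^{\bot}=H$. Running the procedure above on whichever of $H$, $H^{\bot}$ has the smaller dimension gives the bound $2^{O((m+1)\log n)}$ with $m=\min\{s,k-s\}$, and recovering $N_{\Sy_n}(H)$ from $N_{\Sy_n}(H^{\bot})$ costs only polynomial time. Since $m\le k/2 = n/(2p)$, the second, cruder, estimate $2^{O(\frac{n}{2p}\log n)}$ follows immediately.
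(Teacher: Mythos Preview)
Your approach is essentially the paper's: enumerate the $O(n^s)$ choices of preimages of the first $s$ (information) positions together with their scalars, complete each choice by linear algebra, pull back through $\Xi$, and then invoke duality to replace $s$ by $m=\min\{s,k-s\}$.

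Two small inaccuracies are worth flagging. First, the claim that each partial assignment extends to \emph{at most one} monomial automorphism is only true once the columns of $M$ are pairwise non-proportional, i.e.\ once the $H$-orbits are pairwise inequivalent; the paper explicitly invokes \Cref{can remove equiv orbs} to reduce to this case, after which indeed $C_{\Sy_n}(H)=G$ and adjoining $G$ suffices. Without that reduction, your per-leaf step need not be polynomial and adjoining only $G$ need not give all of $N_{\Sy_n}(H)$. Second, the map $b\kappa\mapsto b^{-1}\kappa$ is \emph{not} a group automorphism of $L$ for $p>2$, since $B\cong\mathrm{AGL}_1(p)^k$ is non-abelian; however, it does descend to a group automorphism of $W=L/G$ (because the diagonal group $D$ is abelian), and that is exactly what is needed to transport generators of $\MAut(\gamma(H^{\bot}))$ to generators of $\MAut(\gamma(H))$ before lifting.
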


\begin{proof}
Notice that $|H^{\bot}| = p^{k-s}$, so $m$ is the minimum of $\log_p(|H|)$ and $\log_p(|H^{\bot}|)$. 
We shall show that $N_{\Sy_n}(H)$ can be computed in time $2^{O((s+1) \log{n} )}$.  
Then if $s>k/2$, using Lemmas \ref{define and analyse L}.\ref{norm to BK} and \ref{search in dual}, we may compute $N_{\Sy_n}(H)$ in time $2^{O((m+1) \log{n})}$ by computing $N_{\Sy_n}(H^{\bot})$ instead. 
From here, since $m \leq k/2$, the last assertion will follow. 

By \Cref{can remove equiv orbs}, without loss of generality, we may assume that $\equiv_H$ is the equality relation. 
By \Cref{detect H in InP(Cp) in poly time}, in polynomial time, we can check that $H \in \InP(\Cy_p)$, and obtain a generator matrix $M$ of $\gamma(H)$ in standard form. 
If $w \in W$ is in $\MAut(\gamma(H))$, then $M' \coloneqq Mw$ satisfies $\langle M \rangle = \langle M' \rangle$. 
Since $M$ is in standard form, the elements of $\langle M \rangle$ are uniquely determined by their first $s$ coordinates.
So to test whether a given $w \in W$ is in $\MAut(\gamma(H))$, it suffices to describe which columns of $M$ are mapped onto the first $s$ columns of $M'$ and how they are scaled, then the rest of the action of $w$ is determined.  We find all such $w  = diag(d_1, d_2, \ldots, d_k) \rho(\kappa)$ in $\MAut(\gamma(H))$ by considering all choices of $J = (1^{\kappa^{-1}}, 2^{\kappa^{-1}}, \ldots, s^{\kappa^{-1}}) $ and $v \coloneqq (d_i)_{i \in J} \in (\mathds{F}_p^*)^s$. 

For each such $J$ and $v$, let $M'$ be the partially defined matrix with $M'_{*, i}=d_{i^{\kappa^{-1}}} M_{*, i^{\kappa^{-1}}}$, for $1 \leq i \leq s$. Then each partial row of $M'$ extends to a unique vector in $\langle M \rangle$, which must be the corresponding row of $M'$. 
If $w \in \MAut(\gamma(H))$ then $(Mw)_{*,j}$ is a scalar multiple of a column of $M$. 
Therefore we test whether $J$ and $v$ yield an element of $\MAut(\gamma(H))$ by testing, for all $j >s$, if there exists a $j^{\kappa^{-1}} \not \in \{1^{\kappa^{-1}}, 2^{\kappa^{-1}}, \ldots, s^{\kappa^{-1}} \}$ and $d_{j^{\kappa^{-1}}}$ such that $M_{*,j}' = d_{j^{\kappa^{-1}}} M_{*, j^{\kappa^{-1}}}$.

If for some $j$ there are no such $M_{*, j^{\kappa^{-1}}}$ and $d_{j^{\kappa^{-1}}}$, then this choice of $J$ and $v$ does not extend to an element of $\MAut(\gamma(H))$, and we move on to the next.
Note that since the $H$-orbits are pairwise inequivalent, for each $j$, there are at most one possible choice for $j^{\kappa^{-1}}$.
%If for some $j$, there is more than one possible $j^{\kappa^{-1}}$, then the corresponding orbits are equivalent so we can choose either. 
If we have succeeded in choosing $j^{\kappa^{-1}}$ and $d_{j^{\kappa^{-1}}}$ for all $j$, then
\[
(M diag(d_{1^{\kappa^{-1}}}, \ldots, d_{k^{\kappa^{-1}}}) \rho(\kappa))_{*,j} = d_{j^{\kappa^{-1}}} M_{*, j^{\kappa^{-1}}}  = M'_{*,j} \quad \text{for $1 \leq j \leq k$,} 
\]
and so $w=diag(d_{1^{\kappa^{-1}}}, \ldots, d_{k^{\kappa^{-1}}}) \rho(\kappa) \in \MAut(\gamma(H))$. 
There are $k(p-1) \leq n$ choices of $d_{j^{\kappa^{-1}}} M_{*, j^{\kappa^{-1}}}$ for each $j \in \{ s+1, \ldots, k \}$, so for each $J$ and $r$, this step can be done in polynomial time. \medskip \\
Let $Y$ be the set of all elements $w \in \MAut(\gamma(H))$ found as above. 
We claim that $N_{\Sy_n}(H) = \langle C_{\Sy_n}(H), \Xi^{-1}(Y) \rangle$, where $\Xi^{-1}(Y)$ is any preimage of $Y$. 
To show that $N_{\Sy_n}(H) \leq \langle C_{\Sy_n}(H), \Xi^{-1}(Y) \rangle$, let $\nu \in N_{\Sy_n}(H)$. Then there exists $y \in Y$ such that $M\Xi(\nu) = My$, so $\Xi(\nu) y^{-1}$ stabilises $M$ and hence fixes each $v \in \gamma(H)$. 
So by \Cref{notation: diag and perm}.\ref{equiv action}, $\Xi^{-1}(\Xi(\nu)y^{-1}) = \nu \Xi^{-1}(y^{-1}) \in C_{\Sy_n}(H)$, therefore  $\nu \in  \langle C_{\Sy_n}(H), \Xi^{-1}(Y) \rangle$. 
%Now, $G \leq C_{\Sy_n}(H)$ so by \Cref{conjActionToActionInVector}.\ref{xi is epi with kernel $G$}--\ref{equiv action}, $\nu\Xi^{-1}(y^{-1}) \in C_{\Sy_n}(H)$. 
%Hence $N_{\Sy_n}(H) \leq \langle C_{\Sy_n}(H), \Xi^{-1}(Y) \rangle$. 
The converse containment is clear. 

%$\nu\Xi^{-1}(y^{-1}) \in \langle C_{\Sy_n}(H), G\rangle$, where $G$ is the enveloping group of $H$. Since $G$ centralises $H$, the element $\nu\Xi^{-1}(y^{-1})$ is in $C_{\Sy_n}(H)$. Hence $N_{\Sy_n}(H) = \langle C_{\Sy_n}(H), \{\Xi^{-1}(y) \mid y \in Y \}\rangle$. 

For the complexity claim, since there are $O(k^s (p-1)^s ) \in O(n^s)$ choices for $J$ and $v$, 
the set $Y$ can be computed in time $2^{O(s \log{n} + \log{n} )}$. 
Now by \Cref{notation: diag and perm}.\ref{can compute im and preim of Xi}, the set $\Xi^{-1}(Y)$ can be computed in $2^{O((s+1) \log{n} )}$ time, and by \Cref{elem polytime}.\ref{im of homom and preim of isom}, $C_{\Sy_n}(H)$ can be computed in time $2^{O(\log{n})}$. Therefore $N_{\Sy_n}(H)$ can be computed in time $2^{O((s+1) \log{n} )}$. 
\end{proof}

While searching for elements of $\MAut(\gamma(H))$ in $W$, the above result limits the depth of the search tree. In practice, we search for $\MAut(\gamma(H))$ in $P \cong \Sy_k$, as we shall see in \Cref{subsection: normFixingOrbs}.

%-----------------------------------------------
\subsection{Normalising elements that act non-trivially on \texorpdfstring{$\mathcal{O}$}{O}}
\label{subsection: canon im}

Let $L = B  \rtimes K$ be as in \Cref{normInWreath} and recall that $N_{\Sy_n}(H) \leq L$. 
In \Cref{suffices to search in Sn}, we show that given a $\kappa \in K$, we can decide in polynomial time if there exists an element of $N_{\Sy_n}(H)$ which induces the same permutation as $\kappa$ on the set $\mathcal{O}$ of $H$-orbits.

\begin{lemma} \label{$w$ gives norm iff same semicanon}
Let $F  \coloneqq  \mathrm{GL}_s(p) \times D$.
Define an action of $F$ on $\mathrm{M}(s,k,p)$ by 
\[M^{(R,d)} = R^{-1}Md  \quad \text{for all $R \in \mathrm{GL}_s(p)$ and $d \in  D$}. \] 
Let $\kappa \in K$ and let $M, M' \in \mathrm{M}(s,k , p)$ be generator matrices of $\gamma(H)$ and $\gamma(H^{\kappa^{-1}})$ respectively. 
Then there exists a $b \in B$ such that $b \kappa \in N_{\Sy_n}(H)$ if and only if 
there exists an $f \in F$ such that $M^f = M'$.
\end{lemma}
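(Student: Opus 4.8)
The plan is to collapse both sides of the claimed equivalence to a single condition, namely the existence of a diagonal matrix $d \in D$ with $\langle M\rangle d = \langle M'\rangle$, and then check that each side is equivalent to it. The one delicate point will be identifying the code $\langle M\rangle \rho(\kappa)^{-1}$ with $\langle M'\rangle$, which is where the conjugation action on $H$ is converted into the monomial (column) action on the code.

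First I would rewrite the left-hand side inside $W$. Since $\zeta : B \to D$ is an epimorphism and $\Xi(b\kappa) = \zeta(b)\rho(\kappa)$ by \Cref{conjActionToActionInVector}, as $b$ ranges over $B$ the image $\Xi(b\kappa)$ ranges over the whole coset $Dq$, where $q \coloneqq \rho(\kappa)$. Because $N_{\Sy_n}(H)$ is the full $\Xi$-preimage of $\MAut(\gamma(H))$ by \Cref{conjActionToActionInVector}.\ref{induces homom of norm and kernel}, the existence of $b \in B$ with $b\kappa \in N_{\Sy_n}(H)$ is equivalent to the existence of $d \in D$ with $dq \in \MAut(\gamma(H))$. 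By the definition of $\MAut$ as the setwise stabiliser of the code $\gamma(H) = \langle M\rangle$, this says $\langle M\rangle dq = \langle M\rangle$, i.e.\ $\langle M\rangle d = \langle M\rangle q^{-1}$ for some $d \in D$.

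Next I would identify the target code $\langle M\rangle q^{-1}$. Applying the equivariance $\gamma(g^{l}) = \gamma(g)\Xi(l)$ of \Cref{conjActionToActionInVector}.\ref{equiv action} with $l = \kappa^{-1}$, and noting $\Xi(\kappa^{-1}) = \rho(\kappa^{-1}) = q^{-1}$, gives $\gamma(H^{\kappa^{-1}}) = \gamma(H)q^{-1} = \langle M\rangle q^{-1}$. Since $M'$ is a generator matrix of $\gamma(H^{\kappa^{-1}})$ we obtain $\langle M'\rangle = \langle M\rangle q^{-1}$, so the left-hand side is equivalent to the common condition that $\langle M\rangle d = \langle M'\rangle$ for some $d \in D$. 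At this point I would also remark that $|H^{\kappa^{-1}}| = |H| = p^s$, so $M'$ has exactly $s$ rows and indeed lies in $\mathrm{M}(s,k,p)$, matching $M$; this is the step where the inverses must be tracked carefully ($q = \rho(\kappa)$ against $q^{-1} = \rho(\kappa^{-1})$) and where one confirms that the row-space statement does not depend on the particular generator matrix $M'$ chosen.

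Finally I would match the right-hand side to the common condition by plain linear algebra over $\mathds{F}_p$. For $f = (R,d) \in F$, the equality $M^f = R^{-1}Md = M'$ means $M'$ is obtained from the full-rank matrix $Md$ by the invertible row operation $R^{-1}$, so $\langle M'\rangle = \langle Md\rangle = \langle M\rangle d$; conversely, if $\langle M\rangle d = \langle M'\rangle$ for some $d \in D$, then $Md$ and $M'$ are two full-row-rank generator matrices of the same $s$-dimensional code, so a unique $R^{-1} \in \mathrm{GL}_s(p)$ satisfies $R^{-1}Md = M'$, giving $f = (R,d) \in F$ with $M^f = M'$. Hence the existence of $f \in F$ with $M^f = M'$ is exactly the common condition, and stringing the equivalences together proves the lemma.
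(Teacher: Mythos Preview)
Your proof is correct and follows essentially the same route as the paper's: both reduce the left-hand side to the statement that $\gamma(H)$ and $\gamma(H^{\kappa^{-1}})$ lie in the same $D$-orbit, and then invoke the elementary fact that two full-rank $s\times k$ matrices have the same row space if and only if they differ by left multiplication by an element of $\mathrm{GL}_s(p)$. The paper's proof is terser (it phrases the first reduction as ``$H$ and $H^{\kappa^{-1}}$ are in the same $B$-orbit'' and then appeals to \Cref{conjActionToActionInVector}), whereas you route the same reduction through $\MAut(\gamma(H))$ and the equivariance $\gamma(g^l)=\gamma(g)\Xi(l)$; these are the same argument unpacked to different levels of detail.
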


\begin{proof}
Such a $b$ exists if and only if 
$H$ and $H^{\kappa^{-1}}$ are in the same $B$-orbit, or equivalently, by \Cref{conjActionToActionInVector}, $\gamma(H)$ and $\gamma(H^{\kappa^{-1}})$ are in the same $D$-orbit.  
The result now follows from the fact that $ \langle M \rangle = \langle M' \rangle$ if and only if $M$ and $M'$ are in the same orbit under left multiplication by $\mathrm{GL}_s(p)$. 
\end{proof}

We shall decide if $M' \in M^F$ by computing certain $F$-orbit representatives, which we now define. 
For $A, A' \in \mathrm{M}(s,k,p)$, let $A_{*,i}^R$ denote $A_{*,i}$ reversed. 
We define $A \prec A'$ if there exists a $j$ such that $A_{*,i} = A'_{*,i}$ for $1 \leq i < j$ and
$A_{*,j}^R <_{lex} {A'}_{*,j}^R$. 
We choose the representative of $A^F$ to be the least element under the ordering $\prec$.

Feulner gives an algorithm to compute such $F$-orbit representatives \cite[Algorithm~1]{feulner}. Since we will be proving its complexity, we will present our minor variation on the algorithm here.
A key ingredient is the support partition, which we define next. 
We denote the tuple $(1,2, \ldots, i)$ by $\overline{i}$. 
The \emph{support} of $v \in \mathds{F}_p^a$ is $\Supp(v) \coloneqq  \{i \mid  1 \leq i \leq {a}, \, v_i \neq 0 \}$, and the support of $V \leq \mathds{F}_p^a$ is $\Supp(V) = \cup_{ v \in V} \Supp(v)$. %\todo{why $r$ and not $s$? Because $s$ has a meaning and $r$ don't. Want to defin support on $\mathds{F}_p^k$ too, as we need it later in the pruning section. }

\begin{definition}\label{feulner's supp partn}
Let $M \in \mathrm{M}(s, k, p)$ be in standard form. 
For $1 \leq j \leq {k}$, 
the \emph{support partition} $\mathcal{Q}_j$ is the finest partition of $\{1,2, \ldots, s \}$ such that for $1 \leq i \leq {j}$, there exists a cell $Q$ of $\mathcal{Q}_j$ that contains $\Supp(M_{*,i})$.
\end{definition}

We present a simplified version of \cite[Algorithm~1]{feulner} as \Cref{algm: semicanon}. For $1 \leq j \leq k$, let $D^{[j]}$ be the subgroup of $D$ consisting of matrices of the form $diag(d_1, d_2, \ldots ,d_j, 1, \ldots, 1)$, where $d_i \in \mathds{F}_p^*$. 
\Cref{algm: semicanon} iteratively computes the orbit representative under $\mathrm{GL}_j(p) \times D^{[j]}$, for increasing values of $j$. 

\begin{algorithm} [ht]
\caption{Computing the $F$-orbit representative of $A$}
\label{algm: semicanon}
\textbf{Input: } $A \in \mathrm{M}(s,k,p)$ in standard form. \\
\textbf{Output: } $F$-orbit representative of $A$. 
\begin{algorithmic}[1]
%\State Let $A'$ be a copy of $A$ %\todo{keeping since still want to keep $A$}
\State Compute the partitions $\mathcal{Q}_j$ for $1 \leq j \leq k$ from \Cref{feulner's supp partn}  
\For {$j \in [s, s+1, \ldots, k-1 ]$ and $i \in [s, s-1, \ldots, 1]$}  \Comment{$O(ks)$ time}
        \State $Q,a \gets [i]_{\mathcal{Q}_j}, A_{i,j}$ \Comment{$O(s)$ time}
        \If{$a\neq 0$ \textbf{and} $A_{u,j+1}\neq 0$ for all $u \in Q$ }  
            \Forr {$r \in Q$} 
                Multiply row $A_{r,*}$ by $a^{-1}$ 
            \EndForr \Comment{$O(sk)$ multiplications}
            \ForAll {$l \in [1,2, \ldots, j]$} \Comment{$O(k)$ time}
                \IIf {$\exists q \in Q$ s.t. $A_{q,l} \neq 0$ } 
                    Multiply column $A_{*,l}$ by $a$ 
                \EndIIf \Comment{$O(s)$ time}
            \EndFor
        \EndIf
\EndFor    
\State \Return $A$ % and $f \in F$ such that $A' = A^f$ 
\end{algorithmic}
\end{algorithm}

\begin{theorem} \label{semicanon is poly} \label{semicanon partition in poly time}
Let $A$ be a matrix in $\mathrm{M}(s,k,p)$ in standard form. 
Then, assuming constant time field operations, \Cref{algm: semicanon} returns the representative of $A^F$ in time polynomial in $k$. 
\end{theorem}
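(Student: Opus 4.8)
The plan is to prove the statement in two parts: correctness (that the algorithm returns the $\prec$-least matrix in $A^F$) and the polynomial-time bound. Correctness is in essence that of Feulner's original algorithm \cite{feulner}, so the genuinely new content is the complexity count; I would nonetheless first isolate the structural facts that both parts rely on.

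First I would record the shape of the orbit. Since the action is $M^{(R,d)} = R^{-1} M d$, left multiplication by $\mathrm{GL}_s(p)$ realises an arbitrary change of basis of the row space, while right multiplication by $d \in D$ scales columns. Hence $M' \in A^F$ if and only if $\langle M' \rangle = \langle A \rangle d$ for some $d \in D$; in particular every member of $A^F$ has full row rank $s$, and a member is determined by a column-scaling of the code together with a choice of basis. The object the algorithm must produce is the $\prec$-least such matrix.

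For correctness I would argue by a loop invariant indexed by the outer variable $j$: after the iterations for columns $s+1, \ldots, j$ have completed, the processed prefix of $A$ agrees with the corresponding prefix of the representative, and $\mathcal{Q}_j$ records exactly the residual freedom, namely that the subgroup of $F$ fixing this prefix must scale every index in a single cell of $\mathcal{Q}_j$ by a common scalar (compensating on those earlier columns whose support lies in that cell). The support partition is the key device here: two rows that both meet an already-fixed column must be scaled in lockstep to preserve that column, and this is precisely what forces their indices into a common cell of $\mathcal{Q}_j$. Granting the invariant, each inner step normalises the leading available entry $a = A_{i,j}$ by scaling the whole cell $Q = [i]_{\mathcal{Q}_j}$ and compensating the earlier columns supported on $Q$, which is an element of the residual stabiliser and therefore keeps the matrix inside $A^F$ while making column $j$ $\prec$-minimal. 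I would then check that my variation selects the same representative as Feulner's by confirming that the ordering $\prec$ and the normalisation test ``$a \neq 0$ and $A_{u,j+1} \neq 0$ for all $u \in Q$'' match his.

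The main obstacle, and the new content, is the complexity bound, which I would establish as follows. I would first show that the whole sequence $\mathcal{Q}_1, \ldots, \mathcal{Q}_k$ is computable in polynomial time: since $\mathcal{Q}_{j+1}$ arises from $\mathcal{Q}_j$ by merging the at most $s$ indices in $\Supp(A_{*,j+1})$, a single union--find pass over the $k$ columns produces them all, each of the $k$ merges touching at most $s \leq k$ indices. For the main loop, the outer iteration runs over $(k-s)s$ pairs $(j,i)$, which is $O(k^2)$; within one iteration, extracting the cell and testing the condition is $O(s)$, rescaling the $|Q| \leq s$ rows of length $k$ costs $O(sk)$ field operations, and the column pass is $O(k)$ iterations each costing $O(s)$, hence $O(sk)$ overall. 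Multiplying gives $O(k^2 \cdot sk)$, that is $O(k^4)$ field operations, which under the constant-time field-operation assumption is polynomial in $k$ (recall $s \leq k$). The one point needing care is that the support tests inside the column pass read the current, rescaled matrix rather than the input; I would note this does not affect the asymptotics, as each such test is still $O(s)$. Combining the two parts yields a $\prec$-least representative of $A^F$ in time polynomial in $k$.
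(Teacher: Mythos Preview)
Your proposal is correct and follows essentially the same approach as the paper: defer correctness to Feulner's analysis after noting the simplifications (standard form, prime field, starting at $j=s$), and establish the polynomial bound by showing the $\mathcal{Q}_j$ can be built incrementally by merging cells meeting $\Supp(A_{*,j})$ and then bounding the main loop by the annotated per-line costs. Your write-up is slightly more explicit than the paper's on both fronts---you sketch the loop-invariant reading of $\mathcal{Q}_j$ and give an explicit $O(k^4)$ tally---but the structure and ideas are the same.
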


\begin{proof}
\Cref{algm: semicanon} and \cite[Algorithm~1]{feulner} are identical, except: 
Feulner considers codes over arbitrary finite fields, but we restrict to fields of prime order; 
Feulner's algorithm performs row reduction, computes the partitions $\mathcal{Q}_{j}$ and computes the $F$-orbit representative simultaneously, but we assume $A$ is in standard form, and first compute all the $\mathcal{Q}_{j}$. 
Feulner's algorithm also computes the orbit representative under $\mathrm{GL}_j(p) \times D^{[j]}$ for $j < s$. We observe that since $A$ is in standard form, $A_{*, \overline{s}}$ is an identity matrix, so $A$ is the orbit representative of $A$ under the action of $\mathrm{GL}_s(p) \times D^{[s]}$. 
Hence the correctness of \Cref{algm: semicanon} follows from the analysis in \cite{feulner}. 

For the complexity claim, since $s \leq k$ and the analysis of Lines 2--10 of \Cref{algm: semicanon} is straightforward, it remains only to show that the $\mathcal{Q}_j$ can be computed in time polynomial in $sk \in O(k^2)$. 
As $A_{\overline{s},\overline{s}}$ is an identity matrix, if $j \leq s$ then $\mathcal{Q}_j$ is the trivial partition of $\{ 1,2, \ldots, s\}$ with 
$s$ cells. 
Now let $j \geq s+1$ and suppose that we have constructed $\mathcal{Q}_{j-1}$. To construct $\mathcal{Q}_j$, we merge all cells of $\mathcal{Q}_{j-1}$ that have non-trivial intersection with $\Supp(A_{*,j}')$. 
Since $\mathcal{Q}_{j-1}$ has at most $s$ cells, we can do this in time polynomial in $sk$.
\end{proof}

Lastly, we see how we use \Cref{algm: semicanon} to decide if there exists an element of $N_{\Sy_n}(H)$ which induces a given permutation of the $H$-orbits. 

\begin{proposition} \label{suffices to search in Sn}
Let $H$ be a subgroup of $\Sy_n$ in $\InP(\Cy_p)$, let $L= B \rtimes K$ be as in \Cref{normInWreath},  and let $\kappa \in K$. 
Then in time polynomial in $n$, we can determine if there exists $b \in B$ such that $b \kappa \in N_{\Sy_n}(H)$, and if so, output $b$. 
\end{proposition}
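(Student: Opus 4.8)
The plan is to reduce the question to the matrix-orbit problem resolved by \Cref{$w$ gives norm iff same semicanon}, and then to decide that problem using the canonical-form \Cref{algm: semicanon}. First I would compute, in polynomial time, the conjugate $H^{\kappa^{-1}}$: since $\kappa \in K$ normalises the enveloping group $G$ we have $H^{\kappa^{-1}} \leq G$, so I can apply the isomorphism $\gamma$ of \Cref{def: mapping direct prod of cyclic group to field} to obtain a generator matrix $M'$ of the code $\gamma(H^{\kappa^{-1}})$, while a standard-form generator matrix $M$ of $\gamma(H)$ is available from \Cref{cpk hypo}. By \Cref{$w$ gives norm iff same semicanon}, a suitable $b$ exists if and only if $M$ and $M'$ lie in the same orbit under $F = \mathrm{GL}_s(p) \times D$ acting by $M^{(R,d)} = R^{-1}Md$.

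I would decide this $F$-equivalence by comparing canonical representatives. Because \Cref{algm: semicanon} requires its input in standard form, I must first deal with $M'$. Since the $F$-action preserves the rank of the submatrix formed by the first $s$ columns, and $M$ is in standard form, any matrix in the same $F$-orbit as $M$ has its first $s$ columns of rank $s$; equivalently, $\gamma(H^{\kappa^{-1}})$ then admits a standard-form generator matrix. Hence if the first $s$ columns of $\gamma(H^{\kappa^{-1}})$ have rank less than $s$ — which is checkable in polynomial time by row reduction — I may immediately report that no such $b$ exists. Otherwise I row-reduce $M'$ into standard form (an $\mathrm{GL}_s(p)$-operation, staying in the same $F$-orbit) and apply \Cref{algm: semicanon} to both $M$ and $M'$. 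By \Cref{semicanon is poly} this runs in time polynomial in $n$, and $M$ and $M'$ lie in the same $F$-orbit exactly when the two outputs coincide; if they differ, no $b$ exists.

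To output an explicit $b$ when the representatives agree, I would track the transforming elements through each run of \Cref{algm: semicanon}. The algorithm performs only row rescalings (contributing the $\mathrm{GL}_s(p)$-part) and column rescalings (contributing the diagonal parts $d_M$ and $d_{M'}$), and since $F$ is a direct product these components multiply coordinatewise. Thus if $M^{f_M} = (M')^{f_{M'}}$ is the common representative, then $M^{f_M f_{M'}^{-1}} = M'$ and the $D$-component of $f_M f_{M'}^{-1}$ is $d \coloneqq d_M d_{M'}^{-1}$. By the proof of \Cref{$w$ gives norm iff same semicanon} this records the column scaling $\gamma(H)d = \gamma(H^{\kappa^{-1}})$, so the desired element is $b = \zeta^{-1}(d)$, computable in polynomial time via \Cref{conjActionToActionInVector}.\ref{can compute im and preim of Xi}; that $\gamma(H^{b}) = \gamma(H)\zeta(b) = \gamma(H^{\kappa^{-1}})$, whence $b\kappa \in N_{\Sy_n}(H)$, follows from \Cref{conjActionToActionInVector}.\ref{equiv action}.

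I expect the main obstacle to be the standard-form precondition of \Cref{algm: semicanon}: the conjugate code $\gamma(H^{\kappa^{-1}})$ need not itself be presentable in standard form, and the crucial observation that this failure already certifies the non-existence of $b$ — because the $F$-action preserves the rank of the leading $s$ columns — is what legitimises the reduction to the canonical-form algorithm. The remaining work, namely conjugating $H$ by $\kappa^{-1}$, recording the diagonal scalings applied during the algorithm, and inverting $\zeta$, is routine and polynomial in $n$.
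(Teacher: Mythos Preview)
Your proposal is correct and follows essentially the same route as the paper: reduce via \Cref{$w$ gives norm iff same semicanon} to $F$-orbit comparison, decide this with \Cref{algm: semicanon}, and recover $b = \zeta^{-1}(d_1 d_2^{-1})$ from the tracked diagonal parts. Your explicit handling of the standard-form precondition for $M'$ --- checking the rank of the leading $s$ columns and rejecting early when it drops --- is a detail the paper glosses over (it simply asserts $M'$ can be put in standard form via \Cref{gi and phii in poly time}), so your treatment is in fact slightly more careful on this point.
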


\begin{proof}
By \Cref{gi and phii in poly time}, in polynomial time, we can verify that $H \in \InP(\Cy_p)$ and construct generator matrices $M, M' \in \mathrm{M}(s, k, p)$ for $\gamma(H)$ and $\gamma(H^{\kappa^{-1}})$ respectively, in standard form. 
By \Cref{$w$ gives norm iff same semicanon}, such a $b$ exists if and only if $M$ and $M'$ have the same $F$-orbit representative. So by \Cref{semicanon is poly}, in time polynomial in $n/p$, we can decide if such a $b$ exists. Furthermore, 
by keeping track of the changes in Lines 5 and 7 of \Cref{algm: semicanon}, we can simultaneously obtain elements $(R_1, d_1)$ and $(R_2, d_2)$ of $F$ which map $M$ and $M'$ to their $F$-orbit representatives. 
Let $\zeta: B \rightarrow D$ be as in \Cref{conjActionToActionInVector}.
Then $H^{\zeta^{-1}(d_1d_2^{-1})} = H^{\kappa^{-1}}$ and hence we can construct $b = \zeta^{-1}(d_1d_2^{-1})$, in time polynomial in $n$ by \Cref{conjActionToActionInVector}.\ref{can compute im and preim of Xi}.  
\end{proof}

%------------------------------------------------------------------

\subsection{Computing \texorpdfstring{$N_{\Sy_n}(H)$}{NSn(H)} by searching in \texorpdfstring{$K$}{K}}
\label{subsection: normFixingOrbs}

In this subsection, we will first show that we can efficiently compute $N_{B}(H)$, then we prove that $N_{\Sy_n}(H)$ can be computed in time $O((\frac{n}{p})! + n^c)$ for some constant $c$. 

\begin{definition} \label{notation: M and X}
Let $M \in \mathrm{M}(s,k, p)$ be the generator matrix of $\gamma(H)$ in standard form. 
For $1 \leq i \leq {s}$, we call $x_i \coloneqq  \gamma^{-1}(M_{i,*})$ the \emph{standard generators of $H$}. We replace the original generating set of $H$ by the \emph{standard generating set} $X = \{x_1, x_2, \ldots, x_s \}$. 
\end{definition}

We now show that each element of $N_B(H)$ conjugates elements of $X$ with non-disjoint supports to the same power. Recall that we identify $\mathds{F}_p$ with the integers $\{0,1, \ldots, p-1\}$.

\begin{lemma} \label{mathcalN sends gens to same power if supp intersects}
Let $b \in N_{B}(H)$, and let $x$ and $y$ be standard generators for $H$ with non-disjoint supports. 
Then there exists an $a \in \mathds{F}_p^*$ such that $x^{b} = x^a$ and $y^{b} = y^a$. 
\end{lemma}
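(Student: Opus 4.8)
The plan is to translate the whole statement into the language of the code $\gamma(H)$ and its generator matrix $M$, where it becomes a transparent fact about the columns of $M$. Write $x = x_{i_0}$ and $y = x_{j_0}$ for the two standard generators, so that $\gamma(x) = M_{i_0,*}$ and $\gamma(y) = M_{j_0,*}$, and set $\zeta(b) = diag(d_1, \dots, d_k) \in D$ as in \Cref{conjActionToActionInVector}. Since $x_i = \gamma^{-1}(M_{i,*}) = g_1^{M_{i,1}} \cdots g_k^{M_{i,k}}$ and each $g_j$ has support $\Omega_j$, the support of $x_i$ is $\bigcup_{j \in \Supp(M_{i,*})} \Omega_j$; hence $x$ and $y$ having non-disjoint supports is exactly the statement that $M_{i_0,*}$ and $M_{j_0,*}$ have a common nonzero column, say column $c$, so that $c \in \Supp(M_{i_0,*}) \cap \Supp(M_{j_0,*})$.

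First I would compute the conjugate of a standard generator in coordinates. As $b \in B \leq L$ we have $\Xi(b) = \zeta(b) = diag(d_1,\dots,d_k)$, so \Cref{conjActionToActionInVector}.\ref{equiv action} gives $\gamma(x_i^{b}) = M_{i,*}\,diag(d_1,\dots,d_k) = (M_{i,1}d_1, \dots, M_{i,k}d_k)$. On the other hand, since $\gamma$ is an isomorphism onto the additive group $\mathds{F}_p^k$, we have $\gamma(x_i^{\lambda}) = \lambda M_{i,*}$ for any $\lambda \in \mathds{F}_p^*$. Thus the desired identity $x_i^{b} = x_i^{d_i}$ reduces to showing that $d_j = d_i$ for every $j \in \Supp(M_{i,*})$.

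The heart of the argument is to establish exactly this, using that $M$ is in standard form. Because $b \in N_B(H)$, \Cref{conjActionToActionInVector}.\ref{induces homom of norm and kernel} gives $diag(d_1,\dots,d_k) \in \MAut(\gamma(H))$, so $\langle M \rangle\, diag(d_1,\dots,d_k) = \langle M \rangle$; in particular the row $M_{i,*}\,diag(d_1,\dots,d_k)$ lies in $\langle M \rangle$. Since the first $s$ columns of $M$ form the identity, this row has first $s$ coordinates equal to $d_i$ in position $i$ and $0$ elsewhere, and (as noted in the proof of \Cref{limit depth by base size}) an element of $\langle M \rangle$ is determined by its first $s$ coordinates. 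Comparing with $d_i M_{i,*}$, which has the same first $s$ coordinates and also lies in $\langle M \rangle$, forces $M_{i,*}\,diag(d_1,\dots,d_k) = d_i M_{i,*}$; reading off column $j$ gives $M_{i,j}(d_j - d_i) = 0$, hence $d_j = d_i$ for all $j \in \Supp(M_{i,*})$. Applying this to $i = i_0$ and to $i = j_0$ yields $x^{b} = x^{d_{i_0}}$ and $y^{b} = y^{d_{j_0}}$.

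Finally I would match the two scalars across the common column. The shared nonzero column $c$ lies in both $\Supp(M_{i_0,*})$ and $\Supp(M_{j_0,*})$, so the previous step gives $d_{i_0} = d_c = d_{j_0}$; setting $a \coloneqq d_{i_0} = d_{j_0} \in \mathds{F}_p^*$ then gives $x^{b} = x^{a}$ and $y^{b} = y^{a}$, completing the proof. I expect the only real subtlety to be the standard-form-plus-uniqueness step that shows each $x_i^{b}$ is itself a power of $x_i$; once that is in place, the equality of exponents on overlapping supports is immediate from the existence of a common coordinate.
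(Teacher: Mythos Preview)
Your proof is correct and follows essentially the same route as the paper: both arguments use the standard-form identity block of $M$ together with the ``first $s$ coordinates determine the codeword'' observation to force $\gamma(x_i^b)=d_i\,M_{i,*}$, and then match the two exponents via a common orbit in $\Supp(x)\cap\Supp(y)$. The only difference is presentational: you compute $\gamma(x_i^b)$ explicitly via $\Xi(b)=\zeta(b)$ and \Cref{conjActionToActionInVector}, whereas the paper argues directly from $x^b\in H$ and the action of $b$ on the orbit $\Omega_i$.
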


\begin{proof}
As $x^{b} \in H$, the image $\gamma(x^{b})$ is a linear combination of the rows of $M$.
Since $M_{*, \overline{s}}$ is an identity matrix and each element of $\langle M \rangle$ is completely defined by its first $s$ coordinates, 
there exists an $a \in \mathds{F}_p^*$ such that $\gamma(x^{b}) = \gamma(x)a$ and so $x^{b} = x^{a}$.
Similarly, there exists an $a'$ such that $y^{b} = y^{a'}$.  
Now consider an $H$-orbit $\Omega_i$ contained in $\Supp(x) \cap \Supp(y)$ and let $g_i$ be the corresponding generator of the enveloping group $G$.  
Since $b$ fixes each $\Omega_i$ setwise, $g_i^{b} = g_i^a = g_i^{a'}$ and so $a=a'$. 
\end{proof}

For $Q \leq \Sy_n$, a direct product decomposition $Q = \factor_1 \times \factor_2 \times \cdots \times \factor_r$ of $Q$ is a \emph{finest disjoint direct product decomposition} of $Q$ if the groups $\factor_i$ have pairwise disjoint supports, and each $\factor_i$ cannot be written as a non-trivial direct product of subgroups with disjoint supports. 
We use such decompositions to compute the subgroup of $N_{\Sy_n}(H)$ which setwise stabilises each $H$-orbit. 

\begin{proposition} \label{computing norm in inner action}
Let $H$ be a subgroup of $\Sy_n$ in class $\InP(\Cy_p)$ with orbits $\Omega_1, \Omega_2, \ldots, \Omega_k$, and let $B = \langle N_{\Sym(\Omega_i)}(H|_{\Omega_i}) \mid 1\leq i \leq k \rangle$. 
Then $N_{B}(H)$ can be computed in polynomial time.
\end{proposition}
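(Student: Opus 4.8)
The plan is to reduce the computation of $N_B(H)$ to determining the diagonal monomial automorphisms of the code $\gamma(H)$, and then to read these off from the finest disjoint direct product decomposition of $H$. First I would observe that every $b \in B$ fixes each $H$-orbit setwise, so by \Cref{conjActionToActionInVector} we have $\Xi(b) = \zeta(b) \in D$ and, since $H \leq G$, also $\gamma(h^b) = \gamma(h)\zeta(b)$ for all $h \in H$. Hence $b \in N_B(H)$ if and only if $\zeta(b) \in \MAut(\gamma(H)) \cap D$. Since $G = \Ker(\zeta) \leq B$ and $H \leq G$ with $G$ abelian, we have $G \leq N_B(H)$, so the full preimage formula $N_B(H) = \zeta^{-1}(\MAut(\gamma(H)) \cap D)$ holds, and it suffices to compute the diagonal subgroup $\MAut(\gamma(H)) \cap D$ and lift it through $\zeta$.

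Next I would compute the finest disjoint direct product decomposition $H = \factor_1 \times \cdots \times \factor_r$. Working with the standard generating set $X = \{x_1, \ldots, x_s\}$ of \Cref{notation: M and X}, I would build the graph on the $H$-orbits whose edges join two orbits lying in a common $\Supp(x_i)$; its connected components give the supports of the factors $\factor_j$, and each $\factor_j$ is generated by those $x_i$ supported in its cluster. Reading the nonzero pattern of $M$ and running a union--find computation does this in time polynomial in $sk$.

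I would then pin down $\MAut(\gamma(H)) \cap D$ cluster by cluster. Scalar multiplication by any $a \in \mathds{F}_p^*$ preserves every linear code, so a diagonal matrix that is constant on a cluster always normalises the corresponding $\factor_j$; this is the easy inclusion. For the reverse inclusion, \Cref{mathcalN sends gens to same power if supp intersects} shows that any $b \in N_B(H)$ sends standard generators with intersecting supports to a common power, so by transitivity across a connected cluster all generators of $\factor_j$ satisfy $x_i \mapsto x_i^{a_j}$ for a single $a_j \in \mathds{F}_p^*$; translating through $\gamma$ forces $\zeta(b)$ to equal $a_j$ on every coordinate in $\Supp(\factor_j)$. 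Consequently $\MAut(\gamma(H)) \cap D = \{\, d \in D : d \text{ is constant on each cluster} \,\} \cong (\mathds{F}_p^*)^r$, and a generating set is obtained by scaling each cluster independently by a fixed generator of $\mathds{F}_p^*$.

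Finally I would assemble the answer as $N_B(H) = \langle G,\, \zeta^{-1}(d_1), \ldots, \zeta^{-1}(d_r)\rangle$, where $d_j \in D$ equals a fixed generator of $\mathds{F}_p^*$ on cluster $j$ and $1$ elsewhere; each preimage $\zeta^{-1}(d_j)$ is computed in polynomial time via \Cref{conjActionToActionInVector}.\ref{can compute im and preim of Xi}, and $G$ is available by \Cref{cent of subdir} (or directly from the $g_i$). Every step is polynomial, which gives the claim. I expect the only delicate point to be the reverse inclusion: verifying that entanglement within an indecomposable factor really forces a single common scalar, which is exactly where \Cref{mathcalN sends gens to same power if supp intersects} is used; the remainder is bookkeeping and complexity accounting.
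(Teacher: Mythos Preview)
Your proposal is correct and follows essentially the same approach as the paper: both compute the finest disjoint direct product decomposition $H = \factor_1 \times \cdots \times \factor_r$, use \Cref{mathcalN sends gens to same power if supp intersects} to show that any $b \in N_B(H)$ acts by a single scalar $a_j \in \mathds{F}_p^*$ on each factor $\factor_j$, and conclude that $N_B(H) = \langle G, \sigma_1, \ldots, \sigma_r\rangle$ where $\sigma_j$ (your $\zeta^{-1}(d_j)$) conjugates every $g_i$ supported in $\factor_j$ by a fixed primitive element of $\mathds{F}_p^*$. The only minor difference is that you compute the finest decomposition directly from the support-overlap graph of the standard generators, whereas the paper invokes the general algorithm of \cite{DDPD}; your framing via $\MAut(\gamma(H)) \cap D$ is just a relabelling of the paper's direct construction.
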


\begin{proof}
Let $t$ be a primitive element of $\mathds{F}_p^*$, and 
let $H = \factor_1 \times \factor_2 \times \cdots \times \factor_r$ be the finest disjoint direct product decomposition of $H$. 
For $1 \leq i \leq r$, let $\Gamma_i = \Supp(R_i)$ and let $\sigma_i \in \Sym(\Gamma_i)$ be such that $g_j^{\sigma_i} = g_j^{t}$ for all generators $g_j$ of $G$ such that $\Omega_j \subseteq \Gamma_i$. We first show that
\begin{equation*} \label{eqn: NBH}
    N_{B}(H) = \langle G, \sigma_1, \sigma_2, \ldots , \sigma_r \rangle. 
\end{equation*}
$\geq$: 
Since $B$ fixes each $H$-orbit setwise, $C_B(H) \leq C_{\Sym(\Omega_1)}(H|_{\Omega_1}) \times \cdots \times C_{\Sym(\Omega_k)}(H|_{\Omega_k}) = G$, and so $G = C_B(H)\leq N_B(H)$. 
%We will show that $\sigma_1 \in N_B(H)$, the proof for $i>1$ is identical. 

Let $h \in H$. Then $h^{\sigma_1} = h_1^{\sigma_1} h_2^{\sigma_1} \ldots h_r^{\sigma_1}$ with $h_j \in R_j$. Fix $1 \leq j \leq r$.
If $\Omega_i \subseteq \Gamma_j$ then $h_j|_{\Omega_i} = g_i^{a}$ for some $a$, and so \[
(h_j^{\sigma_j})|_{\Omega_i} = (h_j|_{\Omega_i})^{\sigma_j} = (g_i^{a})^{\sigma_j} = g_i^{a t} = (h_j|_{\Omega_i})^t.
\]
Since $t$ is independent of $\Omega_i$, the permutation $h_j^{\sigma_j} = h_j^t$ is in $R_j$. 
For $i \neq j$, since $h_i$ and $\sigma_j$ have disjoint supports, $h_i^{\sigma_j} = h_i$. 
Hence $h^{\sigma_j} \in H$ and so $\sigma_j \in N_B(H)$.  \\
$\leq$: 
Let $b \in N_B(H)$ and let $X$ be the standard generating set of $H$. 
By \Cref{mathcalN sends gens to same power if supp intersects}, there exists $t^l \in \mathds{F}_p^*$ such that $x^{b|_{\Gamma_i}} = x^{t^l}$ for all $x \in X$ with support intersecting non-trivially with $\Gamma_i$. 
So $(x|_{\Gamma_i})^{b|_{\Gamma_i}} = (x|_{\Gamma_i})^{t^l} = (x|_{\Gamma_i})^{\sigma_i^l}$. 
Since $R_i = \langle (x|_{\Gamma_i}) \mid x \in X \rangle$, the permutation $b|_{\Gamma_i}$ is in $\langle C_{B}(R_i) , \sigma_i^l \rangle \leq \langle G, \sigma_i \rangle$, so the result follows.  \medskip \\
For the complexity result, in time polynomial in $p$, we can determine a primitive $t \in \mathds{F}_p^*$. In time polynomial in $n$, we can compute the unique finest disjoint direct decomposition of $H$ by \cite{DDPD} and construct the $g_i$ by \Cref{gi and phii in poly time}. 
The $\Sym(\Omega_i)$-conjugacy of permutations can be solved in polynomial time, so we can construct the $\sigma_i$ in polynomial time. 
\end{proof}

%-----------------------------------------------

Lastly, we show that to compute $N_{\Sy_n}(H)$ for $H \in \mathfrak{InP}(\Cy_p)$, it suffices to search in $K$. Our implementation to compute $N_{\Sy_n}(H)$ which will be described in \Cref{section: algorithm} uses the procedure described in the following proof.

\begin{theorem} \label{thm: norm sym for our H in expo klogk}
\textsc{Norm-Sym} for $H = \langle X \rangle \leq \Sy_n$, where $H \in \InP(\Cy_p)$, can be solved in time $2^{O(\frac{n}{p} \log{\frac{n}{p}} + \log{n})}$. 
\end{theorem}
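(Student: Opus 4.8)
The plan is to exploit the overgroup $L = B \rtimes K$ from \Cref{normInWreath}, in which $N_{\Sy_n}(H)$ sits and where $K \cong \Sy_k$ with $k = n/p$, and to collapse the search onto the factor $K$ by using the polynomial-time completions supplied by \Cref{suffices to search in Sn} and \Cref{computing norm in inner action}. First I would invoke \Cref{can remove equiv orbs} to reduce to the case where the $H$-orbits are pairwise inequivalent, and use \Cref{gi and phii in poly time} to confirm $H \in \InP(\Cy_p)$ and to build, in polynomial time, the enveloping group $G$, the isomorphism $\gamma$, a standard-form generator matrix of $\gamma(H)$, and the splitting $L = B \rtimes K$. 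I would then compute the inner normaliser $N_B(H) = N_{\Sy_n}(H) \cap B$ in polynomial time by \Cref{computing norm in inner action}.

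The heart of the argument is the generation identity
\[
N_{\Sy_n}(H) = \left\langle N_B(H),\ \{\, b_\kappa\, \kappa : \kappa \in K,\ b_\kappa \in B,\ b_\kappa \kappa \in N_{\Sy_n}(H) \,\} \right\rangle.
\]
The inclusion $\supseteq$ is immediate. For $\subseteq$, any $\nu \in N_{\Sy_n}(H) \leq L$ factors as $\nu = b'\kappa$ with $b' \in B$ and $\kappa \in K$ by \Cref{normInWreath}.\ref{norm to BK}; then $\nu (b_\kappa \kappa)^{-1} = b' b_\kappa^{-1} \in B \cap N_{\Sy_n}(H) = N_B(H)$, so $\nu$ lies in the claimed subgroup. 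To realise this constructively, I would traverse the $k! = (n/p)!$ elements $\kappa$ of $K \cong \Sy_k$ (for instance by a depth-first search in the tree of $K$, as in \Cref{subsection: backtrack}), and for each decide by \Cref{suffices to search in Sn}, in time polynomial in $n$, whether a witness $b_\kappa \in B$ exists, recording $b_\kappa \kappa$ when it does. Finally I would close the recorded elements together with generators of $N_B(H)$ into a group using standard permutation-group machinery (cf.\ \Cref{elem polytime}).

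For the running time, the preprocessing and the computation of $N_B(H)$ are polynomial, each of the $(n/p)!$ coset tests costs $\mathrm{poly}(n)$ by \Cref{suffices to search in Sn}, and forming the group generated by the at most $(n/p)! + \mathrm{poly}(n)$ resulting permutations is again polynomial in the size of that generating set. By Stirling the total is $(n/p)! \cdot \mathrm{poly}(n) = 2^{O(\frac{n}{p}\log\frac{n}{p} + \log n)}$, as required. I do not expect a genuinely hard step: the technical content has already been discharged by \Cref{suffices to search in Sn} and \Cref{computing norm in inner action}, so the main points to get right are the generation identity above and the observation that this reduces the exhaustive search from the large group $L \cong (\Cy_p \rtimes \Cy_{p-1}) \wr \Sy_k$ down to the much smaller $K \cong \Sy_k$, whose order $(n/p)!$ is the sole source of the exponential factor.
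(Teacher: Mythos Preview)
Your proposal is correct and follows essentially the same approach as the paper: compute $N_B(H)$ in polynomial time, then loop over all $\kappa \in K \cong \Sy_k$ using \Cref{suffices to search in Sn} to test for and produce a witness $b_\kappa$ in polynomial time, with the same generation identity and the same complexity accounting $(n/p)!\cdot \mathrm{poly}(n)=2^{O(\frac{n}{p}\log\frac{n}{p}+\log n)}$. The paper additionally initialises with $C_{\Sy_n}(H)$ and does not invoke \Cref{can remove equiv orbs} at this point, but these are cosmetic differences only.
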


\begin{proof}
By \Cref{detect H in InP(Cp) in poly time}, in polynomial time $2^{O(\log{n})}$, we can recognise that $H \in \InP(\Cy_p)$, and obtain a generating set $\langle \overline{\phi_i} \mid 2 \leq j \leq k \rangle$ for $K$, an isomorphism $\gamma: G \rightarrow \mathds{F}_p^k$ as in \Cref{def: mapping direct prod of cyclic group to field}, and 
a generator matrix $M$ for $\gamma(H)$ in standard form. 
Next we compute $C_{\Sy_n}(H)$ and $N_{B}(H)$, in time polynomial in $n$ by \Cref{elem polytime}.\ref{cent in sym} and \Cref{computing norm in inner action} respectively. 

Initialise $N$ as $ \langle C_{\Sy_n}(H), N_{B}(H) \rangle$. 
For each $\kappa \in K$, we determine if there exists $b \in B$ such that $b \kappa \in N_{\Sy_n}(H)$, in time polynomial in $n$ by \Cref{suffices to search in Sn}. If such a $b$ exists, we update $N$ as $\langle N, b \kappa \rangle$.
This takes time $O((\frac{n}{p})! n^c) = 2^{O(\frac{n}{p} \log{\frac{n}{p}} + \log{n})}$, for some constant $c$. 

By the end of the procedure, we have $N \leq N_{\Sy_n}(H)$. 
To show $N_{\Sy_n}(H) \leq N$, let $\nu \in N_{\Sy_n}(H)$.
Then since $\nu \in L =B \rtimes K$ by \Cref{normInWreath}.\ref{norm of H in wreath}, we can find $\kappa \in K$ and $b \in B$ such that $\nu = b \kappa$ by \Cref{normInWreath}.\ref{norm to BK}. If $\kappa=1$, then $\nu \in N_B(H)$ and so $\nu \in N$. 
Suppose now that $\kappa \neq 1$. 
We have already found an element $b' \in B$ such that $b' \kappa \in N_{\Sy_n}(H)$, so $b' \kappa \in N$. 
Then $b \kappa (b' \kappa)^{-1} = b b'^{-1} \in N_B(H) \leq N$. Therefore $\nu = b \kappa  \in N$. 
\end{proof}

\Cref{main theorem - cpk} now follows. 
Note that \Cref{limit depth by base size} gives better complexity than \Cref{thm: norm sym for our H in expo klogk} when $p \leq k$. 
However, the algorithm for \Cref{limit depth by base size} requires the checking of all elements of $(\mathds{F}_p^*)^m$, which we do not have useful pruning methods for. When $p$ and $m$ are large, this becomes infeasible (see \Cref{fig:limitDepthvsReduceBranchings}).

%%%%%%%%%%%%%%%%%%%%%%%%%%%%%%%%%%%%%%%%%%%%%%%%%%%%%%%%%%%%%%%%%%%%%%%%%%%%%%%%%%%%%%%%%%%%%%%%%

\section{Pruning techniques}
\label{subsection: pruning}

Recall that we compute $N_{\Sy_n}(H)$ using backtrack search. 
In this section, we present some methods to compute with $H$ efficiently using $\gamma(H)$, including some pruning techniques.   
We will see how to apply these results in \Cref{section: algorithm}. 
Throughout, let $H$ be as in \Cref{cpk hypo}.

First we show that stabilisers and the relation $\equiv_H$ from \Cref{defn: equiv orbs} can be computed using a generating matrix $M$ for $\gamma(H)$. 
Recall that we denote the rows and columns of $M$ by $M_{i,*}$ and $M_{*,j}$, and let $\overline{s} = \{ 1,2, \ldots, s\}$. 

\begin{lemma} \label{get stab from mat}
Let $\beta$ be a point in an $H$-orbit $\Omega_j$.
If $M_{*,j}$ is non-zero in a unique row, $i$ say, 
then $\gamma(H_{(\beta)}) = \langle M_{\overline{s}\backslash {i},*} \rangle$.
\end{lemma}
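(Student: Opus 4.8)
The plan is to recast the point-stabiliser condition as a single linear equation on the code $\gamma(H)$, and then read off the stabiliser from the assumed shape of the column $M_{*,j}$. First I would reformulate $H_{(\beta)}$ through $\gamma$. For $h \in H$ write $\gamma(h) = (r_1, \ldots, r_k)$, so that $h|_{\Omega_i} = g_i^{r_i}$ for each $i$. Since the orbits have disjoint supports and $\beta \in \Omega_j$, the point $\beta$ is fixed by $h$ if and only if it is fixed by $h|_{\Omega_j} = g_j^{r_j}$. Because $H \in \InP(\Cy_p)$, the restriction $H|_{\Omega_j} = \langle g_j \rangle$ is the regular representation of $\Cy_p$, so $g_j$ is a $p$-cycle and, as $p$ is prime, no non-trivial power of $g_j$ fixes any point. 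Hence $h \in H_{(\beta)}$ if and only if $r_j = 0$, i.e. $\gamma(h)_j = 0$, which gives
\[
\gamma(H_{(\beta)}) = \{\, v \in \gamma(H) \mid v_j = 0 \,\},
\]
the subcode of $\gamma(H)$ of words vanishing in coordinate $j$.

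Next I would exploit the hypothesis on the column. The rows $M_{1,*}, \ldots, M_{s,*}$ form a basis of $\langle M \rangle = \gamma(H)$, so every $v \in \gamma(H)$ is uniquely $\sum_{l=1}^{s} c_l M_{l,*}$. Taking the $j$-th coordinate and using that $M_{*,j}$ is non-zero only in row $i$ yields $v_j = c_i M_{i,j}$ with $M_{i,j} \neq 0$, so $v_j = 0$ precisely when $c_i = 0$. Thus the subcode above is exactly $\{\sum_{l \neq i} c_l M_{l,*}\}$, which is $\langle M_{\overline{s}\setminus\{i\},*}\rangle$: the inclusion $\supseteq$ holds since each row $M_{l,*}$ with $l \neq i$ already has $j$-th entry $0$, and the inclusion $\subseteq$ is the implication $v_j = 0 \Rightarrow c_i = 0$ just established. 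This is the claimed identity.

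The whole argument is a short computation once the first reformulation is in place, so I do not expect a serious obstacle. The only step that genuinely needs care is that first one: one must invoke that each orbit restriction is a \emph{regular} $\Cy_p$-action, so that a non-trivial power of $g_j$ has no fixed point. This is exactly what collapses the condition ``$h$ fixes $\beta$'' to the single linear condition $\gamma(h)_j = 0$; without regularity (or without $p$ prime) the fixed-point set of $g_j^{r_j}$ could be larger and the equivalence would fail.
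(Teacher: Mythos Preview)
Your proof is correct and follows essentially the same approach as the paper: both arguments use regularity of $H|_{\Omega_j}$ to rewrite $h \in H_{(\beta)}$ as $\gamma(h)_j = 0$, and then use the hypothesis on $M_{*,j}$ to identify this subcode with $\langle M_{\overline{s}\setminus\{i\},*}\rangle$. The paper's version is terser but the logic is identical.
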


\begin{proof}
Since $M_{\overline{s}\backslash {i},j}$ is the zero vector, $\gamma^{-1}( M_{\overline{s}\backslash {i},*})$ fixes $\beta$, so $\gamma(H_{(\beta)}) \geq \langle M_{\overline{s}\backslash {i},*} \rangle$. 
For the other containment, let $h \in H_{(\beta)}$. 
Since $H|_{\Omega_j}$ is regular, $(H|_{\Omega_j})_{(\beta)}=1$, so $\gamma(h)_j = 0$.
Since $M_{i,*}$ is the only row of $M$ with non-zero entry in the $j$-th position, 
$\gamma(h) \in \langle M_{\overline{s} \backslash { i },*} \rangle$.
\end{proof}

The assumption on $M_{*,j}$ in the previous lemma can be achieved by performing row operations. Hence any point stabiliser can be computed by row operations. 
%---------------------------

\begin{lemma} \label{id cols are equiv orbits}
Let $M$ be a generator matrix of $\gamma(H)$, and let $a_i$ and $a_j$ be the first non-zero entries of $M_{*,i}$ and $M_{*,j}$ respectively. 
Then $\Omega_i\equiv_H \Omega_j$ if and only if $a_i^{-1} M_{*,i}= a_j^{-1} M_{*,j}$. 
\end{lemma}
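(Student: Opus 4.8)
The plan is to translate the combinatorial relation $\equiv_H$ into a purely linear-algebraic statement about the columns of $M$, using the coordinate description of $H$ supplied by $\gamma$. Write $r_\ell(h) \coloneqq \gamma(h)_\ell$ for the $\ell$-th coordinate of $\gamma(h)$, so that $h|_{\Omega_\ell} = g_\ell^{\,r_\ell(h)}$ for each $h \in H$ by \Cref{def: mapping direct prod of cyclic group to field} and \Cref{cpk hypo}. The heart of the argument is the claim
\[
\Omega_i \equiv_H \Omega_j \iff \exists\, c \in \mathds{F}_p^* \text{ such that } r_j(h) = c\, r_i(h) \text{ for all } h \in H.
\]

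For the forward direction I would invoke the involution characterisation \Cref{equiv orb iff conjugates}: if $\Omega_i \equiv_H \Omega_j$ then there is an involution $\sigma \in \Sym(\Omega_i \cup \Omega_j)$ with $h|_{\Omega_j} = (h|_{\Omega_i})^\sigma$ for all $h$. Since $\sigma$ maps $\Omega_i$ bijectively onto $\Omega_j$, conjugation by $\sigma$ carries the generator $g_i$ of $H|_{\Omega_i}$ to a generator of $(H|_{\Omega_i})^\sigma = H|_{\Omega_j} = \langle g_j \rangle$, so $g_i^\sigma = g_j^c$ for some $c \in \mathds{F}_p^*$. Applying $\sigma$ to $h|_{\Omega_i} = g_i^{r_i(h)}$ then yields $g_j^{r_j(h)} = g_j^{c\, r_i(h)}$, whence $r_j(h) = c\, r_i(h)$. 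For the converse, given such a $c$, I would fix base points $\alpha_0 \in \Omega_i$ and $\beta_0 \in \Omega_j$ and define $\psi : \Omega_i \to \Omega_j$ by $\psi(\alpha_0^{g_i^t}) = \beta_0^{g_j^{ct}}$; this is well defined and bijective because $\langle g_i \rangle$ and $\langle g_j \rangle$ act regularly on $\Omega_i$ and $\Omega_j$. A short calculation using $r_j = c\, r_i$ shows $\psi(\delta^h) = \psi(\delta)^h$ for all $h$ and $\delta$, so $\psi$ witnesses $\Omega_i \equiv_H \Omega_j$.

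It then remains to rewrite the displayed condition in terms of $M$. Every codeword of $\gamma(H)$ has the form $\lambda M$ for a row vector $\lambda$, and its $\ell$-th coordinate equals $\lambda \cdot M_{*,\ell}$; as $h$ ranges over $H$, the vector $\lambda$ ranges over all of $\mathds{F}_p^s$. Hence $r_j(h) = c\, r_i(h)$ for all $h$ is equivalent to $\lambda \cdot (M_{*,j} - c\,M_{*,i}) = 0$ for all $\lambda$, that is, to $M_{*,j} = c\,M_{*,i}$. Finally, since $H$ is a subdirect product of its enveloping group it projects onto each $G_\ell$, so every column $M_{*,\ell}$ is non-zero and $a_i, a_j$ exist; two non-zero columns are scalar multiples of one another precisely when their normalisations agree, i.e.\ when $a_i^{-1} M_{*,i} = a_j^{-1} M_{*,j}$, which converts the existence of $c$ into the stated equality.

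The main obstacle I anticipate is the bookkeeping in the converse direction, namely verifying that $\psi$ is a well-defined bijection and that it intertwines the $H$-actions, together with checking at the outset that the columns are non-zero so that the leading-entry normalisation is meaningful; by contrast, the linear-algebra translation itself is routine.
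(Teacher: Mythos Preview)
Your proposal is correct and follows essentially the same route as the paper: both directions use the involution characterisation \Cref{equiv orb iff conjugates} to extract a scalar $c$ with $g_i^\sigma = g_j^c$ (giving proportionality of the $i$-th and $j$-th coordinates), and for the converse both construct the witnessing bijection $\psi(\alpha_0^{g_i^t}) = \beta_0^{g_j^{ct}}$ via the regular actions. Your write-up is slightly more explicit than the paper's in justifying the linear-algebra translation and the non-vanishing of the columns, but the underlying argument is the same.
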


\begin{proof}
We show that $\Omega_i \equiv_H \Omega_j$ if and only if $M_{*,j} = aM_{*,i}$ for some $a \in \mathds{F}_p^*$, from which the result follows.   \\
$\Leftarrow$: 
Recall the $p$-cycles $g_i$ from \Cref{defining H as subdir of  Cpk}.
Fix $\alpha \in \Omega_i$ and $\beta \in \Omega_j$.
Define a mapping $\psi : \Omega_i \rightarrow \Omega_j$ by 
setting 
$\psi(\alpha^{g_i^u}) = \beta^{g_j^{au}}$ for $0 \leq u \leq {p-1}$.
Since $\langle g_i \rangle$ and $ \langle g_j^a \rangle$ are isomorphic and regular, $\psi$ is a bijection, and the reader may check that $\psi$ witnesses the equivalence of $\Omega_i$ and $\Omega_j$.\\
%Let $\delta \in \Omega_i$.
%Then $\delta = \alpha^{g_i^r}$ for some $r$. 
%Let $h \in H$ and $v = \gamma(h)_i$. 
%Then $\gamma(h)_j = av$ and so
%$h|_{\Omega_i} = g_i^{v}$ and $h|_{\Omega_j} = g_j^{av}$.
%Hence 
%\[ 
%\psi(\delta^h) = \psi(\delta^{g_i^{v}}) = \psi(\alpha^{g_i^{r+v}}) = \beta^{g_j^{a(r+v)}} 
%= (\beta^{g_j^{ar}})^{g_j^{av}} = \psi(\alpha^{g_i^r})^{g_j^{av}} = \psi(\delta)^h, 
%\]
%and so $\psi$ witnesses the equivalence of $\Omega_i$ and $\Omega_j$. \\ 
$\Rightarrow$: 
By \Cref{defn: equiv orbs}, there exists an involution $\sigma \in \Sym(\Omega_i \cup \Omega_j)$ such that for all $h \in H$, we have $h|_{\Omega_j} = (h|_{\Omega_i})^{\sigma}$.
Let $a$ be such that $g_i^\sigma = g_j^a$. 
Let $h \in H$, and let $v = \gamma(h)_i$ and $u=\gamma(h)_j$. 
Then 
\[(g_j^a)^{v} = (g_i^\sigma)^{v} = (g_i^{v})^\sigma= (h|_{\Omega_i})^\sigma = h|_{\Omega_j} = g_j^{u}, \quad \text{so $u = av$. } \qedhere \] 
\end{proof}

%-----------------------

Next, we show how we can use linearly dependent columns of $M$ to prune the search tree.
For a subset $J$ of $\{1,2, \ldots, k\}$, we denote the union $\cup_{j \in J} \Omega_j$ by $\Omega_J$. 
A set $V$ of linearly dependent vectors is \emph{minimally} linearly dependent if no proper subset $U \subset V$ is linearly dependent. 

\begin{lemma} \label{norm maps lin dep to lin dep} \label{indep or mindep by isom}\label{min lin dep to min lin dep} 
Let $I$ and $J$ be subsets of $\{1,2, \ldots, k\}$ such that there exists $\nu \in N_{\Sy_n}(H)$ with $\Omega_I^\nu = \Omega_J$. 
Then the rank of $M_{*,I}$ is equal to the rank of $M_{*,J}$. 
Hence the columns of $M_{*,I}$ are minimally linearly dependent if and only if the columns of $M_{*,J}$ are minimally linearly dependent. 
\end{lemma}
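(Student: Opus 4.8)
The plan is to transport the problem from the permutation group $N_{\Sy_n}(H)$ to the monomial automorphism group $\MAut(\gamma(H))$ acting on the generator matrix $M$, where the statement becomes a routine fact of linear algebra: rescaling and permuting columns, together with an invertible change of basis acting on the left, preserve the rank of every subset of columns.

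First I would take $\nu \in N_{\Sy_n}(H)$ with $\Omega_I^\nu = \Omega_J$ and set $w \coloneqq \Xi(\nu)$, which lies in $\MAut(\gamma(H))$ by \Cref{conjActionToActionInVector}.\ref{induces homom of norm and kernel}. Writing $w = \zeta(b)\rho(\kappa)$ with $b \in B$ and $\kappa \in K$, I would note that since $B$ fixes each $H$-orbit setwise, $\nu = b\kappa$ induces on $\mathcal{O}$ the same permutation $\pi \coloneqq \xi(\kappa)$ as $\kappa$; hence $\Omega_I^\nu = \Omega_J$ forces $I^\pi = J$, and in particular $|I| = |J|$. Because $w$ setwise stabilises the code $\langle M \rangle = \gamma(H)$ and $M$ has full row rank $s$, there is an invertible matrix $R \in \mathrm{GL}_s(p)$ with $Mw = RM$.

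Next I would compute the columns of $Mw$ explicitly. Since the diagonal part $\zeta(b) = \mathrm{diag}(d_1, \ldots, d_k)$ scales column $i$ by $d_i$ and $\rho(\kappa)$ sends column $i$ to position $i^\pi$, one gets $(Mw)_{*,j} = d_{j^{\pi^{-1}}} M_{*, j^{\pi^{-1}}}$. Restricting to the columns indexed by $J$, and using $j^{\pi^{-1}} \in I$ for each $j \in J$, the submatrix $(Mw)_{*,J}$ is exactly $M_{*,I}$ with its columns rescaled by the nonzero scalars $d_i$ and reordered; hence $\mathrm{rank}\left((Mw)_{*,J}\right) = \mathrm{rank}(M_{*,I})$. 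On the other hand $(Mw)_{*,J} = (RM)_{*,J} = R\,M_{*,J}$, and left multiplication by the invertible $R$ preserves rank, so $\mathrm{rank}\left((Mw)_{*,J}\right) = \mathrm{rank}(M_{*,J})$. Comparing the two expressions yields $\mathrm{rank}(M_{*,I}) = \mathrm{rank}(M_{*,J})$, proving the first claim.

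For the \emph{hence} clause, I would observe that the very same argument applies verbatim to any subset $I' \subseteq I$ and its image $J' \coloneqq (I')^\pi \subseteq J$, because $\nu$ also satisfies $\Omega_{I'}^\nu = \Omega_{J'}$; thus $\mathrm{rank}(M_{*,I'}) = \mathrm{rank}(M_{*,J'})$ and $|I'| = |J'|$ for all such subsets. Since a family of columns is linearly independent precisely when its rank equals its cardinality, the bijection $I' \mapsto (I')^\pi$ preserves linear independence of subsets in both directions; minimal linear dependence, namely dependence of the whole family together with independence of every proper subfamily, is therefore preserved. I expect the only genuine care to be in the bookkeeping of the column action of $w$: fixing the direction of the permutation $\pi$ correctly and confirming that the scaling factors $d_i$ are nonzero, so that rescaling does not alter rank. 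Once $Mw = RM$ and the column formula for $Mw$ are in place, the remaining linear algebra is immediate.
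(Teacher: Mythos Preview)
Your proof is correct, but it takes a noticeably different route from the paper's. Whereas you transport everything through $\Xi$ to the monomial action on $M$ and then do explicit column bookkeeping together with an invertible change-of-basis argument ($Mw = RM$), the paper stays entirely in the permutation-group world: the existence of $\nu$ immediately gives that $H|_{\Omega_I}$ and $H|_{\Omega_J}$ are conjugate, hence have the same order; since $|H|_{\Omega_I}| = p^{\mathrm{rank}(M_{*,I})}$ (the restriction to $\Omega_I$ corresponds under $\gamma$ to projecting the code onto the coordinates in $I$), equality of ranks follows in one line. The minimal-dependence clause is then deduced, as in your argument, by applying the rank statement to all subsets. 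The paper's approach is shorter and avoids introducing $\Xi$, $R$, and the column formula altogether; your approach, on the other hand, makes the underlying linear-algebraic mechanism completely explicit and would be the natural route for a reader thinking in terms of codes rather than permutation groups.
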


\begin{proof}
The assumption that $\nu$ exists implies that  $H|_{\Omega_I}$ and $H|_{\Omega_J}$ are conjugate in $\Sym(\Omega_I \cup \Omega_J)$. Let $r_I$ and $r_J$ be the ranks of $M_{*,I}$ and $M_{*,J}$ respectively. Then $p^{r_I} = |(H|_{\Omega_I})|= |(H|_{\Omega_J})| = p^{r_J}$, 
so $r_I = r_J$. 

Observe that the columns of $M_{*,I}$ are minimally linearly dependent if and only if $M_{*,I}$ has rank $|I|-1$ and each proper subset $S$ of columns has rank $|S|$. Since rank is preserved by conjugation, the final claim follows. 
\end{proof}

%----------------------

The following lemma is elementary but extremely useful for pruning the search tree $T$. 
Two columns $M_{*,i}$ and $M_{*,j}$ of $M$ are \emph{equivalent}, denoted by $M_{*,i} \equiv_M M_{*,j}$, if $\Omega_i \equiv_H \Omega_j$. 
The \emph{weight enumerator} $w(C)$ of a linear code $C \leq \mathds{F}_p^k$ is the polynomial $w(C) = \sum_{i=1}^{k} w_i x^i$, where $w_i$ is the number of codewords of weight $i$.

\begin{lemma} \label{prune using weight}
\begin{enumerate}
    \item \label{prune by conjugacy classes of stab mats}
    Let $\Delta$ and ${\Gamma}$ be subsets of $\Omega$ such that $|H_{(\Delta)}| = |H_{(\Gamma)}| = p^l$, say. 
    Let $M_{(\Delta)} , M_{(\Gamma)} \in \mathrm{M}(l,k,p)$ be generator matrices of $H_{(\Delta)}$ and $H_{(\Gamma)}$ respectively. 
    If there exists $\nu \in N_{\Sy_n}(H)$ such that $\Delta^\nu = \Gamma$, then 
    there exists a bijection between the $(\equiv_{M_{(\Delta)}})$-classes and the $(\equiv_{M_{(\Gamma)}})$-classes that preserves the class sizes. 
    \item \label{conjugates have the same weight enumerator}
    Let $Q$ and $Q'$ be $\Sy_n$-conjugate subgroups of $H$. Then $w(\gamma(Q)) = w(\gamma(Q'))$.
    \item \label{min wt vec is invariant}
    Let $V$ be the set of non-zero minimum weight vectors in $\gamma(H)$. Then $N_{\Sy_n}(H)$ setwise stabilises $\gamma^{-1}(V)$.
    Hence $N_{\Sy_n}(H)$ setwise stabilises the partition of $\{ 1,2, \ldots, k \}$ where $i \sim j$ if and only if $|\{ v \in V \mid v[i] \neq 0\}| = |\{ v \in V \mid v[j] \neq 0\}|$.  
\end{enumerate}
\end{lemma}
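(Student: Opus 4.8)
The plan is to derive all three parts from one elementary observation about how weights of codewords relate to supports of permutations. For $h \in H$ each orbit $\Omega_i$ has size $p$, and $h|_{\Omega_i}$ is either trivial or a full $p$-cycle, so $|\Supp(h)| = p \cdot w(\gamma(h))$; that is, the weight of $\gamma(h)$ equals $|\Supp(h)|/p$. Since $\Supp(h^\sigma) = \Supp(h)^\sigma$ for any $\sigma \in \Sy_n$, support sizes, and hence codeword weights, are invariant under $\Sy_n$-conjugacy of the underlying permutations. For the parts that concern the column (coordinate) structure I would instead use that every $\nu \in N_{\Sy_n}(H)$ lies in $L$ by \Cref{normInWreath}.\ref{norm of H in wreath} and acts on $\gamma(H)$ through the monomial matrix $\Xi(\nu) \in W$ by \Cref{conjActionToActionInVector}.\ref{equiv action}; a monomial matrix simply permutes the coordinates of $\mathds{F}_p^k$ and rescales them.

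For part \ref{prune by conjugacy classes of stab mats}, I would first note that since $\nu$ normalises $H$ and $\Delta^\nu = \Gamma$, the pointwise stabilisers satisfy $(H_{(\Delta)})^\nu = H_{(\Gamma)}$. Applying $\gamma$ and the equivariance of \Cref{conjActionToActionInVector}.\ref{equiv action} gives $\gamma(H_{(\Gamma)}) = \gamma(H_{(\Delta)})\,w$, where $w = \Xi(\nu)$ is monomial; write $w = d q$ with $d$ diagonal and $q$ the permutation matrix of some $\sigma \in \Sy_k$. By \Cref{id cols are equiv orbits}, two columns are $\equiv_{M_{(\Delta)}}$-equivalent exactly when one is an $\mathds{F}_p^*$-multiple of the other, and this relation is unchanged if the generator matrix is replaced by another, since left multiplication by an invertible matrix scales whole columns uniformly. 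As $w$ only permutes columns according to $\sigma$ and rescales each by a nonzero scalar, it carries scalar-multiple classes to scalar-multiple classes; hence $\sigma$ induces the required bijection between the $\equiv_{M_{(\Delta)}}$-classes and the $\equiv_{M_{(\Gamma)}}$-classes, and as an honest bijection of coordinates it preserves class sizes.

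Part \ref{conjugates have the same weight enumerator} is then immediate from the opening observation: if $Q^\sigma = Q'$ with $\sigma \in \Sy_n$, then $q \mapsto q^\sigma$ is a bijection $Q \to Q'$ (both contained in $G$, so $\gamma$ is defined on them) under which $w(\gamma(q^\sigma)) = |\Supp(q^\sigma)|/p = |\Supp(q)|/p = w(\gamma(q))$; composing with $\gamma$ yields a weight-preserving bijection $\gamma(Q) \to \gamma(Q')$, so the codes share a weight enumerator. For part \ref{min wt vec is invariant}, any $\nu \in N_{\Sy_n}(H)$ permutes $H$ by conjugation while preserving support sizes, hence preserves the set of nonzero elements of minimal support, giving $\gamma^{-1}(V)^\nu = \gamma^{-1}(V)$. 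For the partition claim I would again use that $\nu$ acts on coordinates through $\sigma \in \Sy_k$ (the permutation part of $\Xi(\nu)$): the invariance $V\,\Xi(\nu) = V$ together with $\Supp(v\,\Xi(\nu)) = \Supp(v)^\sigma$ shows that the count $c(i) = |\{v \in V : v_i \neq 0\}|$ satisfies $c(i) = c(i^\sigma)$ for all $i$, so $\sigma$ stabilises every level set of $c$, which is precisely the asserted partition.

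The bulk of this is routine once the support-size identity is in place; the only point needing genuine care is part \ref{prune by conjugacy classes of stab mats}, where I must verify that the column-equivalence relation is intrinsic to the code (independent of the chosen generator matrix) and is transported correctly by the monomial matrix $\Xi(\nu)$, including the bookkeeping for any zero columns of $M_{(\Delta)}$ and $M_{(\Gamma)}$, which a monomial transformation likewise sends to zero columns. That verification is where I expect the main obstacle to lie.
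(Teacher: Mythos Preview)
Your argument is correct. For parts \ref{conjugates have the same weight enumerator} and \ref{min wt vec is invariant} you do exactly what the paper does: the paper's entire proof of these parts is the single observation that $\gamma(h)_i \neq 0$ if and only if $h|_{\Omega_i} \neq 1$, so the weight of $\gamma(h)$ is the number of $p$-cycles of $h$. Your support-size identity $|\Supp(h)| = p\cdot w(\gamma(h))$ is the same thing. You also spell out the partition claim in part~\ref{min wt vec is invariant} via the permutation part of $\Xi(\nu)$, which the paper leaves implicit.

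For part~\ref{prune by conjugacy classes of stab mats} the routes genuinely differ. The paper simply invokes \Cref{prune by stabs or projections}.\ref{stabToStab} to get $(H_{(\Delta)})^\nu = H_{(\Gamma)}$ and then applies \Cref{conjn maps equiv class to equiv class} directly: conjugation carries $\equiv_{H_{(\Delta)}}$-classes to $\equiv_{H_{(\Gamma)}}$-classes, done. You instead push everything through $\Xi$ to the monomial action on $\mathds{F}_p^k$, observe via \Cref{id cols are equiv orbits} that column equivalence is the scalar-multiple relation, check it is independent of the chosen generator matrix, and read off that a monomial transformation preserves scalar-multiple classes. Your route is more concrete and matches the column-indexed formulation of the statement more directly (including the zero-column bookkeeping you flag), at the cost of invoking the $\Xi$ machinery; the paper's route is terser because \Cref{conjn maps equiv class to equiv class} has already packaged the needed fact. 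Either way the content is the same.
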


\begin{proof}
For \Cref{prune by conjugacy classes of stab mats}, by \Cref{prune by stabs or projections}.\ref{stabToStab}, the existence of $\nu$ implies that 
$H_{(\Delta)}$ and $H_{(\Gamma)}$ are conjugate in $\Sy_n$, so the required bijection exists by \Cref{conjn maps equiv class to equiv class}. 
For \Cref{conjugates have the same weight enumerator,min wt vec is invariant}, notice that $\gamma(h)_i \neq 0$ if and only if $h|_{\Omega_i} \neq 1$. So the weight of $\gamma(h)$ is the number of $p$-cycles of $h$.
\end{proof}

There is no known polynomial-time algorithm for computing the weight enumerator, so we use a simple heuristic to determine when we use \Cref{conjugates have the same weight enumerator} (see \Cref{algm: compareStabs}).

%----------------------

Lastly, we give a technical lemma that can be used to prune the search tree $T$, whose rationale is as follows. 
Assume we are at a node $\tau$ at depth $m>s$ in $T$. If there exists a $\kappa \in K$ represented by a leaf below $\tau$ such that $b \kappa \in N_{\Sy_n}(H)$, 
then there exists $d \in D$ such that $d \rho(\kappa) \in \MAut(\gamma(H))$.
Let $M' \coloneqq  M{d \rho(\kappa)}$.
Then since $m>s$, at node $\tau$ we know, up to $s$ unknown scalars from $\mathds{F}_p^*$, the first $s$ columns of $M'$. 
Since rows of $M'$ are linear combinations of the rows of $M$ and the elements of $\langle M \rangle$ are defined by their first $s$ coordinates, we now know the whole of $M'$, up to $s$ unknown scalars. So if we can deduce that some entries of $M'$ must be zero, then we can sometimes show that no such $\kappa$ exists, and hence we can backtrack from $\tau$. 

\begin{lemma} \label{deep prune lemma}
Let $M \in \mathrm{M}(s,k, p)$ be a generator matrix of $\gamma(H)$ in standard form. 
Fix $m \in \{s, s+1, \ldots,k-1\}$ and let $J = \{1, 2, \ldots, m\} \dot{\cup} \{u\} \subseteq \{1,2, \ldots, k\}$. 
Let $f: J \rightarrow \{1,2, \ldots, k\} $ be an injection.
If there exists an $i \in \{1,2, \ldots, s\}$ such that 
\begin{equation} \label{deep prune eqn}
M_{i,f(u)} \neq 0 \text{ and } M_{i, f(j) }M_{j,u}=0 \quad \text{ for all } j \in J \cap \{1,2, \ldots, s\},
\end{equation}
then there does not exist $\nu \in N_{\Sy_n}(H)$ such that $\Omega_j^\nu = \Omega_{f(j)}$ for all $j \in J$. 
\end{lemma}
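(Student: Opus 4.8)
The plan is to argue by contradiction: assume there is a $\nu \in N_{\Sy_n}(H)$ with $\Omega_j^\nu = \Omega_{f(j)}$ for all $j \in J$, and show that \eqref{deep prune eqn} then forces $M_{i,f(u)} = 0$, contradicting the hypothesis. First I would pass from $\nu$ to the monomial matrix it induces on the code. By \Cref{normInWreath}.\ref{norm of H in wreath} we have $\nu \in L = B \rtimes K$, so write $\nu = b\kappa$ with $b \in B$ and $\kappa \in K$, and set $w \coloneqq \Xi(\nu) = \zeta(b)\rho(\kappa) \in \MAut(\gamma(H))$ using \Cref{conjActionToActionInVector}.\ref{induces homom of norm and kernel}. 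Since $B$ fixes each $H$-orbit setwise, $\nu$ and $\kappa$ induce the same permutation of $\mathcal{O}$, so the orbit condition becomes $\Omega_j^\kappa = \Omega_{f(j)}$ for all $j \in J$. Writing $d = \zeta(b) = diag(d_1,\ldots,d_k)$, this says precisely that the monomial $w$ carries a scalar multiple of column $j$ of $M$ onto column $f(j)$.

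Next I would exploit that $w$ fixes the code. As $w$ is invertible and stabilises $\langle M\rangle$, the matrix $M' \coloneqq Mw$ is again a generator matrix of $\gamma(H)$, so $M' = RM$ for a unique $R \in \mathrm{GL}_s(p)$. The key manoeuvre is to read off the columns of $Mw = RM$ indexed by $f(j)$ for $j \in J$. Since $\kappa^{-1}(f(j)) = j$, column $f(j)$ of $Mw$ equals $d_j M_{*,j}$, while column $f(j)$ of $RM$ equals $RM_{*,f(j)}$; hence
\[
R\,M_{*,f(j)} = d_j M_{*,j} \qquad (j \in J).
\]
As $m \ge s$ we have $\{1,\ldots,s\} \subseteq J$, and there $M_{*,j} = e_j$ because $M$ is in standard form, giving $R^{-1}e_j = d_j^{-1} M_{*,f(j)}$ for $1 \le j \le s$. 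Taking $j = u$ instead yields $M_{*,f(u)} = d_u R^{-1} M_{*,u}$.

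Finally I would combine these. Expanding the $s$-vector $M_{*,u} = \sum_{j=1}^s M_{j,u}\,e_j$ and substituting the formula for $R^{-1}e_j$ gives
\[
M_{*,f(u)} = d_u \sum_{j=1}^{s} d_j^{-1} M_{j,u}\, M_{*,f(j)},
\]
and reading off row $i$ produces $M_{i,f(u)} = d_u \sum_{j=1}^s d_j^{-1} M_{j,u}\, M_{i,f(j)}$. By \eqref{deep prune eqn} every product $M_{i,f(j)}M_{j,u}$ vanishes, so $M_{i,f(u)} = 0$, contradicting $M_{i,f(u)} \neq 0$. I expect the only genuine subtlety to lie in the second paragraph: fixing the direction of $\kappa$ and verifying $\kappa^{-1}(f(j)) = j$ so that the column identity $R M_{*,f(j)} = d_j M_{*,j}$ is extracted correctly. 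Once that identity is secured and the standard form of $M$ is used to invert $R$ on each $e_j$, the collapse of the sum and the resulting contradiction are immediate.
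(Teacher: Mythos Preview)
Your proof is correct and follows essentially the same route as the paper: pass from $\nu$ to the monomial matrix $\Xi(\nu)\in\MAut(\gamma(H))$, observe that $M' = M\Xi(\nu)$ has the same row space as $M$, read off the column identities coming from $\Omega_j^\nu = \Omega_{f(j)}$, and combine these with the standard form of $M$ to compute an entry two ways, obtaining the contradiction with \eqref{deep prune eqn}. The only cosmetic difference is that you name the change-of-basis matrix $R$ and work with $R^{-1}e_j$, whereas the paper phrases the same step as ``$M'_{i,*}=\sum_{l=1}^{s}M'_{i,l}M_{l,*}$'' and evaluates $M'_{i,u}$ directly; the linear algebra is identical.
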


\begin{proof}
Aiming for a contradiction, assume that such an $i$ and $\nu$ exist. 
Then by \Cref{conjActionToActionInVector}.\ref{induces homom of norm and kernel}, $\Xi(\nu) \in \MAut(\gamma(H))$. 
So there exist $d= diag(d_1, d_2, \ldots, d_k) \in D$ and $q \in P$ such that $\Xi(\nu) = dq$. Let $M' = M dq$. Then by considering the action of $W$ on $\mathds{F}_p^k$,  
\begin{equation} \label{eqn: rows of M'}
    M'_{i,j} = d_{f(j)} M_{i, f(j)}, \quad \text{for all $j \in J$ and all $i$.} 
\end{equation}
In particular, $M'_{i,u} = d_{f(u)} M_{i, f(u)} $, which is non-zero. 

Each row of $M'$ is a linear combination of the rows of $M$, and $M$ is in standard form, so 
\[
M'_{i,*} = \sum_{l=1}^s M'_{i,l} M_{l,*}  =  \sum_{l=1}^s d_{f(l)} M_{i,f(l)} M_{l,*} , 
\]
where the second equality follows from (\ref{eqn: rows of M'}). 
Therefore by (\ref{deep prune eqn}), $M'_{i,u} =  \sum_{l=1}^s d_{f(l)}M_{i,f(l)} M_{l,u} =0$, a contradiction.
\end{proof}

%%%%%%%%%%%%%%%%%%%%%%%%%%%%%%%%%%%%%%%%%%%%%%%%%%%%%%%%%%%%%%%%%%%%%%%%%%%%%%%%%%%%%%%%%%%%%%%%%

\section{Description of implemented algorithm}
\label{section: algorithm}

Given a generating set $X$ of $H \leq \Sy_n$ such that $H \in \InP(\Cy_p)$, we compute $N_{\Sy_n}(H)$ using \Cref{algm: main norm algorithm}.
The algorithm roughly follows the description in the proof of \Cref{thm: norm sym for our H in expo klogk}, with extra steps for the pruning of the search tree. 

We first represent $H$ and $H^{\bot}$ by generator matrices $M$ and $M^{\bot}$ respectively, where $H^{\bot}$ is as in \Cref{defn: dual code}. Then we compute $C_{\Sy_n}(H)$, which may allow us to compute the normaliser of a group with a smaller degree, as in \Cref{can remove equiv orbs}.
Next we compute $N_{B}(H)$, where $B$ is as in \Cref{normInWreath}. That is, we compute all normalising elements fixing every $H$-orbit setwise.
Lastly, we perform backtrack search in $K \cong \Sy_k$ to find the remaining normalising elements. 

To simplify notation, we assume in the description below that $\gamma(H)$ has dimension $s \leq k/2$ and that $H$ has no equivalent orbits. 

\begin{algorithm}
\caption{Computing the normaliser of $H \in \InP(\Cy_p)$}
\label{algm: main norm algorithm}
\textbf{Input:} A generating set $X$ of $H \leq \Sy_n$, where $H \in \InP(\Cy_p)$ and $H$ has no equivalent orbits. \\
\textbf{Output:} $N_{\Sy_n}(H)$  
\begin{algorithmic}[1]
\IIf {$H \not \in \InP(\Cy_p)$} \Return{\textsc{Fail}} \EndIIf \Comment{polynomial by \Cref{detect H in InP(Cp) in poly time}}
\State Compute the enveloping group $G$ of $H$ and $\gamma:G \rightarrow \mathds{F}_p^k$ \Comment{polynomial by \Cref{gi and phii in poly time}}
\State Let $\mathcal{O} = \{ \Omega_1, \Omega_2, \ldots, \Omega_k \}$ be the orbits of $H$, ordered as in \Cref{cpk hypo} 
%\State Let $\gamma: G \rightarrow \mathds{F}_p^k$ be as in \Cref{def: mapping direct prod of cyclic group to field}
\State Let $M$ be a generator matrix of $\gamma(H)$ in standard form
\State Let $M^{\bot}$ be a generator matrix of $\gamma(H^{\bot})$ \Comment{where $H^{\bot}$ is as in \Cref{defn: dual code}}
\State $N \gets N_B(H)$ \Comment{using \Cref{computing norm in inner action}}
\State $\textsc{doms} \gets \textsc{domainsInit}(M, M^{\bot})$ \Comment{see \Cref{algm: make partitions}}
\State $LDcols \gets \{ \{i\} \cup \{j \mid M_{j,i} \neq 0,  1 \leq j \leq s \}  \mid s+1 \leq i \leq k \}$ 
\State ${dualLDScols} \gets \{ \{i\} \cup \{j \mid  (M^{\bot})_{j,i}  \neq 0,  k-s+1 \leq j \leq k  \}  \mid 1 \leq  i \leq k-s \}$ 
\State{$\textsc{recurseSearch}([\,], \textsc{doms})$} \Comment{see \Cref{algm: search step}}
\State \Return $N$
\end{algorithmic}
\end{algorithm}

%---------------------------------------------------------------

\subsection*{Lines 1--9 of \Cref{algm: main norm algorithm}: Pre-search}
\label{subsection: pre-search}

Before the backtrack search, we compute various structures we shall use later. 

\begin{description} \setlength{\itemsep}{0pt}

\item[Lines 4--5] We compute a row reduced generator matrix $M$ of $\gamma(H)$ using $X$. Since $M$ is in standard form, $M = (\mathrm{I}_s \mid M_0)$, so $M^{\bot} \coloneqq (M_0^{T} \mid \mathrm{I}_{k-s})$ is a generator matrix of $\gamma(H)^{\bot}$ \cite{lintCoding}. 

\item[Line 6] We gather the normalising elements we find in a group $N$. Since we assume that $H$ has no equivalent orbits, $G = C_{\Sy_n}(H) \leq N_B(H)$ by \Cref{cent of subdir}.

\item[Line 7] For each $H$-orbit $\Omega_i$, we compute a set of possible images of $\Omega_i$ under $N_{\Sy_n}(H)$, called the \emph{domain} of $\Omega_i$. This will be used to guide our search in \Cref{algm: search step}.

\item[Lines 8--9] We compute certain sets of linearly dependent columns of $M$ and $M^{\bot}$, and will later explain how we use these sets for pruning. 
Since $M$ is in standard form, the standard basis of $\mathds{F}_p^{s}$ (as column vectors) forms the first $s$ columns. Therefore we can write each later column of $M$ as a linear combination of $M_{*,1}, M_{*,2}, \ldots, M_{*,s}$. 
Similarly, the standard basis of $\mathds{F}_p^{{k-s}}$ forms the last $k-s$ columns of $M^{\bot}$.
\end{description}

By the end of Line 6, $N$ contains $N_B(H) \geq C_{\Sy_n}(H)$.
So, as in \Cref{thm: norm sym for our H in expo klogk}, this allows us to only search for non-trivial elements of $K$ which give rise to elements of $N_{\Sy_n}(H)$.

\subsection*{\Cref{algm: make partitions}: Initialising the domains with \textsc{domainsInit}, \textsc{makeDualEquivPartn}, \textsc{makeStabsPartn} and \textsc{makeInvSetPartn}}
\label{subsection: domain init}

\textsc{DomainsInit} returns a list of sets $\textsc{doms}$, where each entry $\textsc{doms}[i]$ is a subset of $\{1,2 \ldots, k\}$ such that if there exists $\nu \in N_{\Sy_n}(H)$ with $\Omega_i^\nu = \Omega_j$ then $j \in \textsc{doms}[i]$.
We shall compute partitions $\mathcal{P}_e^{\bot}, 
\mathcal{P}_s, \mathcal{P}_s^{\bot}, \mathcal{P}_I$ of $\{1,2, \ldots, k \}$, where each partition represents a partition of $\mathcal{O} = \{ \Omega_1, \Omega_2, \ldots, \Omega_k \}$ preserved by $N_{\Sy_n}(H)$. Hence the meet $\mathcal{P}^*$ of these partitions is also preserved by $N_{\Sy_n}(H)$ and we initialise the domain of $i$ to be $[i]_{\mathcal{P}^*}$. 

We have assumed that the $H$-orbits are pairwise inequivalent, but the relation $\equiv_{H^{\bot}}$ may be non-trivial. \textsc{makeDualEquivPartn} exploits this and computes $\mathcal{P}_e^{\bot}$, which $N_{\Sy_n}(H)$ preserves by \Cref{conjn maps equiv class to equiv class}. 
Since known subgroups of $N_{\Sy_n}(H)$ can be used to prune the search tree, and centralising elements for $H^{\bot}$ that act non-trivially on the set of $H^{\bot}$-orbits yield elements of $N_{\Sy_n}(H)\backslash B$, we also update $N$ with such normalising elements. 

\textsc{makeStabsPartn} considers the stabiliser of each $H$- and $H^{\bot}$-orbit, then uses the sizes of these stabilisers' $\equiv$-classes to make partitions $\mathcal{P}_s$ and $\mathcal{P}_s^{\bot}$, which are preserved by $N_{\Sy_n}(H)$ by Lemmas \ref{prune using weight}.\ref{prune by conjugacy classes of stab mats} and \ref{search in dual}. 

\textsc{makeInvSetPartn} first computes 
the set $invSet$ of minimal weight vectors of $\gamma(H)$. 
The algorithm systematically considers linear combinations of rows of $M$ with increasingly many non-zero coefficients. 
Since $M_{\overline{s},\overline{s}}$ is the identity matrix, 
if $v$ is a linear combination with non-zero coefficients of $i$ rows of $M$, then $wt(v) \geq i$. As we are only interested in vectors with weight at most $m$, we only consider all such linear combinations $v$ of up to $m$ rows of $M$. 
The group $N_{\Sy_n}(H)$ preserves $\mathcal{P}_I$ by \Cref{prune using weight}.\ref{min wt vec is invariant}.

\begin{algorithm}[ht!]
\caption{Initialise domains}
\label{algm: make partitions}
\begin{algorithmic}[1]
\Procedure{domainsInit}{$M, M^{\bot}$}   
    \State $\mathcal{P}_e^{\bot}, 
\mathcal{P}_s, \mathcal{P}_s^{\bot}, \mathcal{P}_I 
\gets  \textsc{makeDualEquivPartn}(M^{\bot}), \textsc{makeStabsPartn}(M), $ \newline \hspace*{10em} $ \textsc{makeStabsPartn}(M^{\bot}), \textsc{makeInvSetPartn}(M)$
%    \State $\mathcal{P}_e^{\bot} \gets \textsc{makeDualEquivPartn}(M^{\bot})$ 
%    \State $\mathcal{P}_s, \mathcal{P}_s^{\bot}  \gets \textsc{makeStabsPartn}(M),  \textsc{makeStabsPartn}(M^{\bot})$ 
%    \State $\mathcal{P}_I \gets \textsc{makeInvSetPartn}(M)$ \
    \State $\mathcal{P}^* \gets Meet(\mathcal{P}_e^{\bot},\mathcal{P}_s, \mathcal{P}_s^{\bot}, \mathcal{P}_I)$ % \Comment{$N_{\Sy_n}(H)$ preserves $\mathcal{P}^*$ by \Cref{conjn maps equiv class to equiv class,search in dual,prune using weight} \todo{move to discussion}}
    \State \Return $[ [i]_{\mathcal{P}^*} \mid 1 \leq i \leq k] $ 
\EndProcedure
\smallskip
\Procedure{makeDualEquivPartn}{$M^{\bot}$}
    \State Compute the $(\equiv_{H^{\bot}})$-classes \Comment{using \Cref{id cols are equiv orbits}}
    \For{each pair of equivalent orbits $\Omega_i$ and $\Omega_j$}
        \State Let $c \in C_{\Sy_n}(H^{\bot})$ conjugate $\Omega_i$ to $\Omega_j$ \Comment{as in \Cref{cent of subdir} }
        \State Find $b \in B$ and $\kappa \in K$ such that $c=b \kappa$ \Comment{using \Cref{normInWreath}.\ref{norm to BK}} 
        \State $N \gets \langle N,b^{-1}\kappa \rangle$ \Comment{$b^{-1}\kappa \in N_{\Sy_n}(H)$ by \Cref{search in dual}}
    \EndFor
    \State \Return{partition of $\{ 1,2, \ldots, k\}$ such that $i \sim j$ if and only if $|[\Omega_i]_{\equiv_{H^{\bot}}}| = |[\Omega_j]_{\equiv_{H^{\bot}}}|$}
\EndProcedure
\smallskip
\Procedure{makeStabsPartn}{$mat$} \Comment{$mat$ is $M$ or $M^{\bot}$}
    \For{$i \in [1,2, \ldots, k]$}  
        \State $Q_i \gets \gamma^{-1}(\langle mat \rangle)_{(\Omega_i)}$ \Comment{using \Cref{get stab from mat}}
        \State $\mathcal{Q}_i \gets $ the multiset of sizes of the $(\equiv_{Q_i})$-classes \Comment{using \Cref{id cols are equiv orbits}}
    \EndFor
    \State \Return{partition of $\{1,2, \ldots, k \}$ such that $i \sim j$ if and only if $\mathcal{Q}_i=\mathcal{Q}_j$}
\EndProcedure   
\smallskip
\Procedure{makeInvSetPartn}{$M$}
    \State $m \gets min_{1 \leq i \leq s}(wt(M_{i,*}))$\Comment{minimum weight of codewords found so far}
    \State $invSet \gets \{i \mid wt(M_{i,*}) = m \}$  \Comment{minimum weight codewords found so far}
    \For{$i \in [2,3,\ldots, m]$}
        \For{all linear combinations $v$ with non-zero coefficients of $i$ rows of $M$}
            \IIf{$wt(v)<m$} Reset $m \gets wt(v)$ and $invSet \gets \{ v \}$ \EndIIf
            \IIf{$wt(v)=m$} Add $v$ to $invSet$ \EndIIf
        \EndFor
    \EndFor 
    \State \Return{partition of $\{1,2, \ldots, k\}$ such that $i \sim j$ if and only if $|\{ v \in invSet \mid v[i] \neq 0\}|$}  \newline \hspace*{5em} $= |\{ v \in invSet \mid v[j] \neq 0\}|$
\EndProcedure
\end{algorithmic}
\end{algorithm}

%---------------------------------------------------------

\subsection*{\Cref{algm: search step}: Search with \textsc{recurseSearch}}
\label{subsection: search}

Now we shall describe the recursive search, which is initialised in Line 10 of \Cref{algm: main norm algorithm}. We shall traverse the search tree depth first, using the domains \textsc{doms} to guide our search.
Note that $N$ is a global variable that stores the group generated by all elements of $N_{\Sy_n}(H)$ we have found so far. The pruning tests used in Lines 10--16 are presented as \Cref{algm: checkLDS,algm: compareStabs}.

\begin{algorithm}[ht!]
\caption{\textsc{recurseSearch}}
\label{algm: search step}
\begin{algorithmic}[1]
\Procedure{recurseSearch}{$ \underline{\alpha} = [\alpha_1, \alpha_2, \ldots, \alpha_d], \textsc{doms}$}
    \If{$d=k$}
        \State $\kappa \gets$ permutation in $K$ such that $\Omega_i^{\kappa} = \Omega_{\alpha_i}$ for all $i$
        \If {there exists $b \in B$ such that $b \kappa \in N_{\Sy_n}(H) $} 
         \Comment{using \Cref{suffices to search in Sn}}
            \State $N \gets \langle N,b \kappa \rangle$
            \State Backtrack to $[\alpha_1, \ldots, \alpha_{j}]$, where $j$ is the largest integer s.t.\ $\alpha_i = i$ for all $i \leq j$  
        \EndIf        
    \Else 
        \IIf{$[\alpha_1, \alpha_2, \ldots, \alpha_{d-1}] = [1,2, \ldots ,d-1]$ and $\alpha_d \neq d$ and $\alpha_d$ is not minimal in \newline\hspace*{5em}  $\alpha_d^{N_{(\alpha_1, \alpha_2, \ldots, \alpha_{d-1})}}$} \Return{} \EndIIf 
        \State $passed1, \textsc{doms} \gets \textsc{checkLDS}(M,LDcols,\underline{\alpha}, \textsc{doms})$ \Comment{see \Cref{algm: checkLDS}}
        %\IIf{$\neg passed$} \Return{} \EndIIf 
        \State $passed2, \textsc{doms} \gets \textsc{checkLDS}(M^{\bot},dualLDScols,\underline{\alpha}, \textsc{doms})$ 
        %\IIf{$\neg passed$} \Return{} \EndIIf
        \State $Mstab, MstabIm \gets \gamma(H_{(\Omega_1, \ldots, \Omega_d)}), \gamma(H_{(\Omega_{\alpha_1}, \ldots, \Omega_{\alpha_d})})$ \Comment{using \Cref{get stab from mat}}
         \State $passed3, \textsc{doms} \gets \textsc{compareStabs}(Mstab,MstabIm,\underline{\alpha}, \textsc{doms})$  \Comment{see \Cref{algm: compareStabs}}
        %\IIf{$\neg passed$} \Return{} \EndIIf
        \State $Mstab^{\bot} \gets$ generator matrix of $\gamma((H^{\bot})_{(\Omega_1, \ldots, \Omega_d)})$ \Comment{using \Cref{get stab from mat}}
        \State  $MstabIm^{\bot} \gets$ generator matrix of $ \gamma((H^{\bot})_{(\Omega_{\alpha_1}, \ldots, \Omega_{\alpha_d})})$  
        \State $passed4, \textsc{doms} \gets \textsc{compareStabs}(Mstab^{\bot},MstabIm^{\bot},\underline{\alpha}, \textsc{doms})$ 
        %\IIf{$\neg passed$} \Return{} \EndIIf
        \IIf{$\neg (passed1 \text{ and } passed2 \text{ and } passed3 \text{ and } passed4)$} \Return{} \EndIIf
        \If{$d>s$} 
        \For{$1 \leq i \leq s$ and $t \in [ s+1 \leq t \leq k \mid M_{i,\alpha_u}M_{u, t} =0 \text{ for all } u]$ }
             \State $\textsc{doms}[t] \gets [j \in \textsc{doms}[t] \mid M_{i,j}=0]$ \Comment{using  \Cref{deep prune lemma}}
        \EndFor
        \EndIf 
        \State $\textsc{doms} \gets \textsc{allDiffRefiner}(\textsc{doms})$ 
        \IIf{$\exists i$ such that $\textsc{doms}[i] = \emptyset$} \Return{} \EndIIf
        \Forr{$\alpha_{d+1} \in \textsc{doms}[d+1]$}
           $\textsc{recurseSearch}([\alpha_1, \alpha_2, \ldots, \alpha_{d+1}], \textsc{doms})$
        \EndForr        
    \EndIf
\EndProcedure
\end{algorithmic}
\end{algorithm}

\begin{description}\setlength{\itemsep}{0pt}
\item [Lines 2--7] If $d=k$, then we have arrived at a leaf node of the search tree. 
We only require a generating set of $N_{\Sy_n}(H)$, so we can backtrack to node $[\alpha_1, \alpha_2, \ldots, \alpha_{j-1}]$. See \cite[\S9.1.1]{seress} for details. 

\item [Line 9] By \cite[\S9.1.1]{seress}, it suffices to test only the minimum value of each $N_{(\Omega_1, \Omega_2, \ldots, \Omega_{d-1})}$-orbit. 

\item [Line 23] 
Since the images of $\mathcal{O}$ must be distinct, whenever the domains are changed, we can  further refine the domains. For each $i$, let $J_i  \coloneqq  \{ j\mid 1 \leq j \leq {k} \text{ and } \textsc{doms}[j] = \textsc{doms}[i] \} $.
If $|J_i| = |\textsc{doms}[i]|$, then any normalising element under the current node maps $\{\Omega_i \mid i \in J_i \}$ to $\{\Omega_i \mid i \in \textsc{doms}[i] \}$. 
We remove elements of $\textsc{doms}[i]$ from $\textsc{doms}[t]$ for all $t \not \in J_i$. 

\item [Lines 24--25] If a domain becomes empty, we backtrack. Otherwise, we continue the depth-first search, branching using $\textsc{doms}$. 
\end{description}

\subsection*{\Cref{algm: checkLDS}: Pruning functions \textsc{checkLDS} and \textsc{compareStabs}}

\textsc{checkLDS} uses minimally linearly dependent columns of $M$ to prune the search tree. If the condition in Line 6 is satisfied then there is no normalising element under the current node that sends $\Omega_{I[1]}$ to $\Omega_i$ by \Cref{norm maps lin dep to lin dep}, so we remove $i$ from the domain of $I[1]$.  

\textsc{compareStabs} uses conjugacy of point stabilisers to prune the search tree, as in \Cref{prune by stabs or projections}.\ref{stabToStab}. If any of the conditions in Lines 13, 15 or 18 is satisfied then there are no normalising elements under the current node, by  Lemmas \ref{conjn maps equiv class to equiv class}, \ref{prune using weight}.\ref{prune by conjugacy classes of stab mats} and \ref{prune using weight}.\ref{conjugates have the same weight enumerator} respectively.

\begin{algorithm}[ht!]
\caption{\textsc{checkLDS} and \textsc{compareStabs} }
 \label{algm: checkLDS} \label{algm: compareStabs}
\begin{algorithmic}[1]
\Procedure{checkLDS}{$M,LDcols, [\alpha_1, \alpha_2, \ldots, \alpha_{d}], \textsc{doms}$}
    \For{$lds \in LDcols$} \Comment{see \Cref{algm: main norm algorithm}, Line 8}
        \State $I \gets lds \backslash \{1,2, \ldots, d \} $ \Comment{unassigned column images}
        \If{$|I| = |lds|-1$} \Comment{image of $M_{*, I[1]}$ must be in the span of other columns}
            \For {$i \in \textsc{doms}[I[1]]$}
                \IIf{$M_{*,i} \not \in \langle M_{*,{\alpha_j}} \mid j \in lds\backslash I[1] \rangle$} 
                   Remove $i$ from $\textsc{doms}[I[1]]$ 
                \EndIIf       
            %\newline \hspace*{3em} \Comment{no normalising element under current  node that sends $\Omega_{I[1]}$ to $\Omega_i$ by \Cref{norm maps lin dep to lin dep}}
            \EndFor
        \EndIf
    \EndFor
    \State \Return{\textsc{True}, $\textsc{doms}$}
\EndProcedure
\smallskip
\Procedure{compareStabs}{$Mstab,MstabIm,[\alpha_1, \alpha_2, \ldots, \alpha_{d}], \textsc{doms}$}
    \IIf{multisets of the sizes of the $(\equiv_{Mstab})$-classes and of the $(\equiv_{MstabIm})$-classes are different \newline \hspace*{2em} } \Return{$\textsc{False}, \textsc{doms}$}   \EndIIf   
    %\newline \hspace*{6em} \Comment{no normalising elements under the current node by \Cref{prune using weight}.\ref{prune by conjugacy classes of stab mats}}

    \For{$i \in \{1,2, \ldots ,k \} \backslash \{\alpha_1, \alpha_2, \ldots, \alpha_d\}$ and $j \in \textsc{doms}[i]$} 
        \IIf{$| [\Omega_i]_{\equiv_{Mstab}} | \neq |[\Omega_j]_{\equiv_{MstabIm}}|$} Remove $j$ from $\textsc{doms}[i]$ \EndIIf %\newline \hspace*{6em} \Comment{no normalising elements mapping $\Omega_i$ to $\Omega_j$ by \Cref{conjn maps equiv class to equiv class}}
    \EndFor
    \If{$(s-d)*p \leq 45$} \Comment{45 is a heuristic}  
        \IIf{$w(Mstab) \neq w(MstabIm)$} 
        \Return{$\textsc{False}, \textsc{doms}$} 
        \EndIIf %\newline \hspace*{6em} \Comment{stabilisers are not conjugate by \Cref{prune using weight}.\ref{conjugates have the same weight enumerator}} 
    \EndIf
    \State \Return{$\textsc{True}, \textsc{doms}$}   \Comment{no obstruction to conjugacy found}
\EndProcedure
\end{algorithmic}
\end{algorithm}

%%%%%%%%%%%%%%%%%%%%%%%%%%%%%%%%%%%%%%%%%%%%%%%%%%%%%%%%%%%%%%%%%%%%%%%%%%%%%%%%%%%%%%%%%%%%%%%%%

\section{Extension: Groups in class \texorpdfstring{$\InP(\Di_{2p})$}{InP(D2p)}}
\label{section: dihedral}\label{subdir of dihedral}

In this section, we consider $H \in \mathfrak{InP}(\Di_{2p})$, where $p$ is an odd prime and $\Di_{2p}$ is the dihedral group of order $2p$ and degree $p$. 
We show that $N_{\Sy_n}(H)$ can be found by computing the normalisers of its Sylow subgroups, which can be identified with groups in classes $\InP(\Cy_p)$ and $\InP(\Cy_2)$.

We will assume the following throughout this section. 
Let $n=pk$ and $\Omega= \{1,2, \ldots,  n \}$.
Let $H$ be a subgroup of $\Sy_n$ in class $\InP(\Di_{2p})$ with orbits $\Omega_1, \Omega_2, \ldots, \Omega_k$ such that $\Omega= \cup_{i=1}^k \Omega_i$. 
Let $\Di_i \coloneqq H|_{\Omega_i}$ for each $1 \leq i \leq k$, so $D = D_1 \times D_2 \times \cdots \times D_k \leq \Sy_n$ is the enveloping group of $H$. 
For $1 \leq i \leq {k}$, let $G_i$ be the Sylow $p$-subgroup of $D_i$, let $G = G_1 \times G_2 \times \cdots \times G_k \leq \Sy_n$, and let $H_q$ be a Sylow $q$-subgroup of $H$, where $q \in \{2,p\}$. 

\begin{lemma} \label{sylow p is intersection + norm inside norm of sylow p} \label{T2 results}
\begin{enumerate}
    \item \label{Tp normal in H} \label{H is TpT2} $H = H_p \rtimes H_2 = (H \cap G) \rtimes H_2$, and so $N_{\Sy_n}(H) \leq N_{\Sy_n}(H_p)$. 
    \item \label{Tp subdir of G}$H_p$ is a subdirect product of $G$, and so $H_p \in \InP(\Cy_p)$.
    \item \label{T2 is stab} \label{T2 is subdir of stab} For all $i$, there exists $\alpha_i \in \Omega_i$ such that $H_2|_{\Omega_i} = (D_i)_{(\alpha_i)}$.  %, so \todo{need?} $H_2 = H_{(\alpha_1, \alpha_2, \ldots, \alpha_k)}$. 
\end{enumerate}
\end{lemma}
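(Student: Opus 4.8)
The plan is to reduce everything to the local structure of $\Di_{2p}$ in its natural degree-$p$ representation and then transport these facts through the subdirect embedding $H \leq D = D_1 \times \cdots \times D_k$ of \Cref{enveloping group}. Since $p$ is odd, the rotation subgroup $G_i$ is the \emph{unique} subgroup of $D_i$ of order $p$, hence its characteristic Sylow $p$-subgroup, and every point stabiliser $(D_i)_{(\alpha)}$ has order $|D_i|/|\Omega_i| = 2$ and is generated by the reflection fixing $\alpha$. I will use these two observations repeatedly. Throughout, note that $H \leq D$ has order dividing $|D| = 2^k p^k$, so $H$ is a $\{2,p\}$-group.

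For part~\ref{Tp normal in H}, I would first note that $G = \prod_i G_i$ is normal in $D$, being a direct product of the normal subgroups $G_i \trianglelefteq D_i$; hence $H \cap G \trianglelefteq H$. Because $D/G \cong \prod_i (D_i/G_i) \cong \Cy_2^{\,k}$ is a $2$-group, the quotient $H/(H\cap G) \cong HG/G$ embeds in a $2$-group, so $H \cap G$ is a normal $p$-subgroup of $2$-power index in $H$, i.e.\ the unique Sylow $p$-subgroup $H_p = H \cap G$. Writing $2^c$ for its index and taking any Sylow $2$-subgroup $H_2$ (of order $2^c$), coprimality gives $H_p \cap H_2 = 1$ and $|H_p H_2| = |H|$, so $H = H_p \rtimes H_2 = (H\cap G)\rtimes H_2$. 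Finally, as $H_p$ is the unique Sylow $p$-subgroup it is characteristic in $H$, so conjugation by any $\sigma \in N_{\Sy_n}(H)$ is an automorphism of $H$ fixing $H_p$ setwise, whence $N_{\Sy_n}(H) \leq N_{\Sy_n}(H_p)$.

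For part~\ref{Tp subdir of G}, I must show each projection $\pi_i \colon H_p \to G_i$ is onto. Subdirectness of $H$ in $D$ provides some $h \in H$ with $\pi_i(h)$ a generator of $G_i$, but $h$ need not lie in $G$; the device is to raise to the $2$-part. Let $2^c$ be the $2$-part of the order of $h$. Then $h^{2^c}$ has $p$-power order, so each $\pi_j(h^{2^c})$ is an element of $p$-power order in $D_j$, hence a rotation, hence in $G_j$; therefore $h^{2^c} \in G \cap H = H_p$. Since $\gcd(2^c,p)=1$, the image $\pi_i(h^{2^c}) = \pi_i(h)^{2^c}$ is still a generator of $G_i$, so $\pi_i(H_p) = G_i$ for all $i$. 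Thus $H_p$ is subdirect in $G$, and as each $G_i$ acts regularly of degree $p$ on $\Omega_i$, this is precisely the statement $H_p \in \InP(\Cy_p)$.

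For part~\ref{T2 is stab}, the restriction $H_2|_{\Omega_i} = \pi_i(H_2)$ is a $2$-subgroup of $D_i$, so it is either trivial or a point stabiliser $(D_i)_{(\alpha_i)}$ of order $2$; the only work is to exclude the trivial case. Here I would invoke the factorisation from part~\ref{Tp normal in H}: since $H = H_p H_2$, we get $D_i = \pi_i(H) = \pi_i(H_p)\,\pi_i(H_2) = G_i\,\pi_i(H_2)$ as sets, and because $|G_i| = p$ with $G_i \cap \pi_i(H_2) = 1$, the product can have order $2p = |D_i|$ only if $|\pi_i(H_2)| = 2$. Then $\pi_i(H_2)$ is generated by a reflection fixing a unique point $\alpha_i$, so $H_2|_{\Omega_i} = (D_i)_{(\alpha_i)}$. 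The argument is largely routine; the two places needing care are the $2$-part trick in part~\ref{Tp subdir of G} (one cannot argue the subdirectness of $H_p$ purely locally, since subdirectness of $H$ alone does not land elements inside $G$) and the order count in part~\ref{T2 is stab} (ruling out a trivial projection forces one to use $H = H_p H_2$ rather than reason one orbit at a time).
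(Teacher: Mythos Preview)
Your proof is correct and follows essentially the same route as the paper. The only cosmetic difference is in part~\ref{Tp subdir of G}: the paper simply squares $h$ (noting that every element of $D_j \cong \Di_{2p}$ has order $1$, $2$, or $p$, so $h^2|_{\Omega_j}$ already lies in $G_j$), whereas you raise to the full $2$-part $2^c$ of the order of $h$; since every element of $D$ has order dividing $2p$, these coincide.
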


\begin{proof}
\Cref{Tp normal in H}: As $G$ is the unique Sylow $p$-subgroup of $D$, it follows that $H_p \leq H \cap G$, so $H_p = H \cap G$. Since $G \trianglelefteq D$, the group $H_p$ is characteristic in $H$, so the last assertion follows. \\ 
\Cref{Tp subdir of G}: 
Since $H$ is a subdirect product of $D$, for all $i$ there exists $h \in H$ such that $1 \neq h|_{\Omega_i} \in G_i$. 
Then, for all $j$, the restriction $h^2|_{\Omega_j}$ is of order $1$ or $p$.
So $h^2|_{\Omega_j} \in G_j$ for all $j$ and therefore $h^2 \in G \cap H = H_p$. 
Since $h|_{\Omega_i}$ is a $p$-cycle, $h^2|_{\Omega_i} \neq 1$, so $H_p|_{\Omega_i} = G_i$.  \\
\Cref{T2 is stab}: 
For all $i$, there exists an involution $r_i \in D_i$ such that $H_2|_{\Omega_i} \leq \langle r_i \rangle$. 
Since every involution in $D_i$ fixes a point, there exists $\alpha_i \in \Omega_i$ such that $r_i$ fixes $\alpha_i$.  
So $H_2|_{\Omega_i} \leq (D_i)_{(\alpha_i)}$.
If $H_2$ pointwise stabilises $\Omega_i$, then by \Cref{H is TpT2}, 
$H|_{\Omega_i} = (H_p H_2)|_{\Omega_i} =  H_p|_{\Omega_i} = G_i$,
but $H$ is a subdirect product of $D$, a contradiction, and so $H_2|_{\Omega_i} = (D_i)_{(\alpha_i)}$. %and hence $H_2 \leq H_{(\alpha_1, \alpha_2, \ldots, \alpha_k)}$. Since $H_{(\alpha_1, \alpha_2, \ldots, \alpha_k)}$ is a $2$-group, $H_2 = H_{(\alpha_1, \alpha_2, \ldots, \alpha_k)}$. 
\end{proof}

For $2 \leq i \leq k$, let $\phi_i : \Omega_1 \rightarrow \Omega_i$ witness the permutation isomorphism from $D_1$ to $D_i$, and satisfy $\phi_i(\alpha_1)= \alpha_i$, where $\alpha_i$ is as in \Cref{T2 results}.\ref{T2 is stab}. 
Let $K = \langle \overline{\phi_i} \mid 2 \leq i \leq k \rangle$, let $L = \langle N_{\Sym(\Omega_1)}(G_1) ,\ldots, N_{\Sym(\Omega_k)}(G_k), K \rangle \cong (\Cy_p \rtimes \Cy_{p-1}) \wr \Sy_k$ and let $I = N_L(H_p) \cap N_L(H_2)$.
%We now show that we can compute $N_{\Sy_n}(H)$ using normalisers of Sylow subgroups of $H$. 

\begin{proposition} \label{frattini on subdir od D2pk}
$N_{\Sy_n}(H) = IH$.
\end{proposition}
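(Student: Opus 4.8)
The plan is to prove the two inclusions $IH \subseteq N_{\Sy_n}(H)$ and $N_{\Sy_n}(H) \subseteq IH$ separately, the second by a Frattini argument built on the Sylow $2$-subgroup $H_2$ of $H$. Throughout I will use freely that $H = H_p \rtimes H_2$ and $N_{\Sy_n}(H) \leq N_{\Sy_n}(H_p)$ from \Cref{T2 results}.\ref{H is TpT2}, and that $H_p \in \InP(\Cy_p)$ with enveloping group $G$ from \Cref{T2 results}.\ref{Tp subdir of G}.

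For the easy inclusion, since $N_{\Sy_n}(H)$ is a group containing $H$, it suffices to check that $I \leq N_{\Sy_n}(H)$. Any $x \in I = N_L(H_p) \cap N_L(H_2)$ normalises both $H_p$ and $H_2$, hence normalises $\langle H_p, H_2 \rangle = H$; as $x \in L \leq \Sy_n$, we get $x \in N_{\Sy_n}(H)$. Since $H \trianglelefteq N_{\Sy_n}(H)$, the product $IH$ is then a subgroup of $N_{\Sy_n}(H)$.

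For the reverse inclusion, I first show $N_{\Sy_n}(H) \leq N_L(H_p)$. The key observation is that $L$ is precisely the overgroup of \Cref{normInWreath} attached to $H_p$: each defining generator $N_{\Sym(\Omega_i)}(G_i)$ is the normaliser of $H_p|_{\Omega_i} = G_i$, and each $\phi_i$, chosen to witness the permutation isomorphism $D_1 \to D_i$, restricts to one $G_1 \to G_i$ because a permutation isomorphism carries Sylow $p$-subgroups to Sylow $p$-subgroups. Applying \Cref{normInWreath}.\ref{norm of H in wreath} to $H_p$ gives $N_{\Sy_n}(H_p) \leq L$, and combined with $N_{\Sy_n}(H) \leq N_{\Sy_n}(H_p)$ this yields $N_{\Sy_n}(H) \leq N_L(H_p)$. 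Now take $\nu \in N_{\Sy_n}(H)$. Then $H_2^\nu$ is a Sylow $2$-subgroup of $H^\nu = H$, so by Sylow's theorem there is $h \in H$ with $(H_2^\nu)^h = H_2$; set $x \coloneqq \nu h$. Since each $D_i$ normalises $G_i$ we have $H \leq D \leq L$, so $x \in L$, and $x$ normalises $H_2$ by construction, giving $x \in N_L(H_2)$. Moreover both $\nu$ and $h$ normalise $H_p$ (the former by the paragraph above, the latter as $H_p \trianglelefteq H$), so $x = \nu h \in N_L(H_p)$. Hence $x \in I$ and $\nu = x h^{-1} \in IH$, completing the inclusion.

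I expect the main obstacle to be the careful identification of $L$ with the overgroup of $H_p$ furnished by \Cref{normInWreath}: one must verify that the subgroups $G_i$, the normalisers $N_{\Sym(\Omega_i)}(G_i)$, and the involutions $\overline{\phi_i}$ used to assemble $L$ in this section coincide with the data the earlier construction would produce for $H_p$, so that the inclusion $N_{\Sy_n}(H_p) \leq L$ genuinely applies to the $L$ defined here. Once this is in place, the Frattini step reducing $\nu$ modulo $H$ to an element normalising $H_2$ is entirely standard.
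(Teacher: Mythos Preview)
Your proof is correct and follows essentially the same approach as the paper: both inclusions are handled the same way, and for the harder direction you use the Frattini argument on the Sylow $2$-subgroup $H_2$ just as the paper does, the only difference being that you spell the argument out elementwise rather than invoking ``the Frattini argument'' by name. Your explicit check that the $L$ defined in \Cref{subdir of dihedral} really is the overgroup of \Cref{normInWreath} associated to $H_p$ (via the observation that the $\phi_i$ witnessing $D_1\to D_i$ also witness $G_1\to G_i$) is a point the paper leaves implicit in its joint citation of \Cref{T2 results}.\ref{Tp normal in H} and \Cref{normInWreath}.\ref{norm of H in wreath}.
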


\begin{proof}
By \Cref{T2 results}.\ref{H is TpT2}, for all $h \in H$, there exist $h_p \in H_p$ and $h_2 \in H_2$ such that $h=h_ph_2$. 
So $h^\iota = h_p^{\iota} h_2^{\iota} \in H_p H_2 = H$ for all $\iota \in I$. 
Therefore $IH \leq N_{\Sy_n}(H)$. 

For the converse containment, $N_{\Sy_n}(H) = N_L(H)$ by Lemmas \ref{T2 results}.\ref{Tp normal in H} and \ref{define and analyse L}.\ref{norm of H in wreath}. Using the Frattini argument, $N_{\Sy_n}(H)= N_{N_L(H)}(H_2)H = (N_L(H) \cap N_L(H_2))H$.  
Finally by \Cref{sylow p is intersection + norm inside norm of sylow p}.\ref{Tp normal in H}, $N_L(H)$ is contained in $N_L(H_p)$, so $N_{\Sy_n}(H) \leq IH$. 
\end{proof}

%We show how we compute $I$ in \Cref{compute I from norm of groups in inp}, but we first construct a group $T$ containing $I$. 
For all $i \leq k$, let $N_i = N_{\Sym(\Omega_i)}(H_2|_{\Omega_i}) \cap N_{\Sym(\Omega_i)}(G_i)$, and let $T = \langle N_1, N_2, \ldots, N_k, K \rangle$. %We compute $I$ using $T$. 

\begin{lemma}\label{get N of T2} 
\begin{enumerate}
    \item \label{I in T} $I \leq T$ and so $I = N_T(H_2) \cap N_T(H_p)$.
    \item \label{intersection of norms is generated by ci } Let $t \in \mathds{F}_p^*$ be primitive. For all $i \leq k$, let $g_i \in \Sym(\Omega_i)$ be a generator of $G_i$, and let $c_i$ be an element of $\Sym(\Omega_i \backslash \{\alpha_i\})$ such that $g_i^{c_i} = g_i^t$. 
    Then $N_i = \langle c_i \rangle$ and %so $L = \langle T,G \rangle$. 
    so $T \cong \Cy_{p-1} \wr \Sy_k$.
\end{enumerate}

\end{lemma}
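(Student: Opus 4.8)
For part~\ref{I in T} I would in fact prove the stronger inclusion $N_L(H_2)\subseteq T$; since $I=N_L(H_p)\cap N_L(H_2)$, this gives $I\le T$ at once. Write an arbitrary $\iota\in N_L(H_2)$ as $\iota=b\kappa$ with $\kappa\in K$ and $b$ in the base group $B=N_{\Sym(\Omega_1)}(G_1)\times\cdots\times N_{\Sym(\Omega_k)}(G_k)$, using $L\cong(\Cy_p\rtimes\Cy_{p-1})\wr\Sy_k$. As $\kappa\in K\le T$, it suffices to show $b\in\langle N_1,\ldots,N_k\rangle=N_1\times\cdots\times N_k$, i.e.\ that $b|_{\Omega_i}\in N_i$ for every $i$. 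Since $b\in B$ already normalises each $G_i$, the only thing to verify is that $b|_{\Omega_i}$ normalises $H_2|_{\Omega_i}=\langle r_i\rangle$, where by \Cref{T2 results}.\ref{T2 is stab} the element $r_i$ is the involution of $D_i$ fixing $\alpha_i$. The crucial ingredient is the equivariance $r_i^{\kappa}=r_{i^{\kappa}}$ (writing $\Omega_i^{\kappa}=\Omega_{i^{\kappa}}$): each generator $\overline{\phi_j}$ conjugates $r_1$ to $r_j$ and fixes every other $r_i$ because $\phi_j$ is a permutation isomorphism $D_1\to D_j$ with $\phi_j(\alpha_1)=\alpha_j$, and it induces the transposition $(\Omega_1,\Omega_j)$ on orbits, so $r_i^{\kappa}=r_{i^{\kappa}}$ follows by induction on the word length of $\kappa$.

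Granting this, fix $i$ and let $h$ range over $H_2$. Since $b$ fixes each orbit setwise and $\kappa$ carries $\Omega_i$ to $\Omega_{i^{\kappa}}$, the restriction satisfies $(h^{\iota})|_{\Omega_{i^{\kappa}}}=((h|_{\Omega_i})^{b|_{\Omega_i}})^{\kappa}$. As $h|_{\Omega_i}$ ranges over $H_2|_{\Omega_i}=\langle r_i\rangle$ and $H_2^{\iota}=H_2$, taking the subgroup generated yields
\[
\langle (r_i^{\,b|_{\Omega_i}})^{\kappa}\rangle = H_2|_{\Omega_{i^{\kappa}}} = \langle r_{i^{\kappa}}\rangle .
\]
Hence $(r_i^{\,b|_{\Omega_i}})^{\kappa}=r_{i^{\kappa}}=r_i^{\kappa}$ by the equivariance, and cancelling $\kappa$ gives $r_i^{\,b|_{\Omega_i}}=r_i$. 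Thus $b|_{\Omega_i}$ centralises $r_i$ and normalises $G_i$, so $b|_{\Omega_i}\in N_i$, proving $N_L(H_2)\subseteq T$. The identity $I=N_T(H_2)\cap N_T(H_p)$ is then formal: $I=I\cap T=(N_L(H_p)\cap T)\cap(N_L(H_2)\cap T)=N_T(H_p)\cap N_T(H_2)$.

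For part~\ref{intersection of norms is generated by ci }, I would identify $N_{\Sym(\Omega_i)}(G_i)$ with $\mathrm{AGL}_1(p)=N_{\Sy_p}(\Cy_p)$, realised on $\Omega_i\cong\mathds{F}_p$ with $g_i$ the translation $x\mapsto x+1$ and $\alpha_i=0$, so that $r_i$ is $x\mapsto -x$. Any $\sigma\in N_i$ normalises the order-two group $\langle r_i\rangle$, hence centralises $r_i$ and permutes its fixed points; as $p$ is odd, $r_i$ has the unique fixed point $\alpha_i$, so $\sigma$ fixes $\alpha_i$. A direct check shows the elements of $\mathrm{AGL}_1(p)$ centralising $x\mapsto -x$ are exactly the scalings $x\mapsto ax$ with $a\in\mathds{F}_p^*$, which form the stabiliser of $\alpha_i$, cyclic of order $p-1$. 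Since $c_i$ fixes $\alpha_i$ and satisfies $g_i^{c_i}=g_i^{t}$ with $t$ primitive, it is such a scaling of multiplicative order $p-1$, so $N_i=\langle c_i\rangle\cong\Cy_{p-1}$. Finally the $N_i$ have pairwise disjoint supports, whence $\langle N_1,\ldots,N_k\rangle=N_1\times\cdots\times N_k\cong\Cy_{p-1}^{\,k}$; conjugation by $\kappa\in K$ carries $\alpha_i\mapsto\alpha_{i^{\kappa}}$, $G_i\mapsto G_{i^{\kappa}}$ and $\langle r_i\rangle\mapsto\langle r_{i^{\kappa}}\rangle$, so $N_i^{\kappa}=N_{i^{\kappa}}$, and $K\cong\Sy_k$ acts faithfully on the orbits by \Cref{define and analyse L}.\ref{K isom Sk}. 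Therefore $T=(N_1\times\cdots\times N_k)\rtimes K\cong\Cy_{p-1}\wr\Sy_k$.

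The main obstacle is the equivariance $r_i^{\kappa}=r_{i^{\kappa}}$: it is precisely what makes the cancellation of $\kappa$ in the display legitimate, and it rests entirely on the normalisation $\phi_j(\alpha_1)=\alpha_j$ built into the choice of the $\phi_j$. Once this bookkeeping of how $K$ moves the distinguished involutions is in place, both parts are routine.
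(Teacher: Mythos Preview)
Your proposal is correct and follows essentially the same approach as the paper. For part~\ref{I in T}, both you and the paper decompose $\iota=b\kappa$ and show that the base-group component normalises each $H_2|_{\Omega_i}$ by exploiting that $\kappa$ carries $H_2|_{\Omega_i}=(D_i)_{(\alpha_i)}$ to $H_2|_{\Omega_{i^\kappa}}$; your explicit equivariance $r_i^{\kappa}=r_{i^{\kappa}}$ is exactly the paper's statement that ``$\kappa$ conjugates $H_2|_{\Omega_i}$ to $H_2|_{\Omega_j}$'' (and indeed the paper's argument, like yours, only uses $\iota\in N_L(H_2)$, so the stronger inclusion you note is implicitly there too). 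For part~\ref{intersection of norms is generated by ci }, the paper argues abstractly by contradiction that no element $gc$ with $1\ne g\in\langle g_i\rangle$ can centralise $r_i$ (because $gc$ would move the unique fixed point $\alpha_i$ of $r_i$), whereas you reach the same conclusion via the explicit $\mathrm{AGL}_1(p)$ model and a direct centraliser computation; these are the same fixed-point observation in different clothing, and your wreath-product verification is a spelled-out version of the paper's appeal to \Cref{define and analyse L}.\ref{item: L is wreath}.
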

\begin{proof}
\Cref{I in T}: 
Let $\iota \in I$. Since $L \cong N_{\Sym(\Omega_1)}(G_1) \wr \Sy_k$ by \Cref{define and analyse L}.\ref{item: L is wreath}, there exists $\kappa \in K$ such that $\iota \kappa^{-1}$ fixes each $\Omega_i$ setwise and $(\iota \kappa^{-1})|_{\Omega_i} \in N_{\Sym(\Omega_i)}(G_i)$. 
To show that $(\iota \kappa^{-1})|_{\Omega_i}$ normalises $H_2|_{\Omega_i}$, 
let $j$ be such that $\Omega_j = \Omega_i^{\iota}$. 
Then $\kappa$ witnesses the permutation isomorphism from $D_i$ to $D_j$ and maps $\alpha_i$ to $\alpha_j$, so $\kappa$ conjugates $H_2|_{\Omega_i}= (D_i)_{(\alpha_i)}$ to $H_2|_{\Omega_j}$. 
Therefore $(H_2|_{\Omega_i})^{\iota \kappa^{-1}} = (H_2|_{\Omega_j})^{\kappa^{-1}} = H_2|_{\Omega_i}$. 
Hence $(\iota \kappa^{-1})|_{\Omega_i} \leq N_i$ and so $\iota \in T$.\\
\Cref{intersection of norms is generated by ci }: 
Observe that $N_{\Sym(\Omega_i)}(G_i) = \langle g_i, c_i \rangle$, so $N_i
 \leq \langle g_i\rangle \rtimes \langle  c_i \rangle $.
Assume for a contradiction that there exists a non-trivial $g \in \langle g_i \rangle$ and a $c \in \langle c_i \rangle$
such that $gc \in N_i$. Letting $r_i$ be the generator for $H_2|_{\Omega_i} \cong \Cy_2$, note that $r_i^{gc} = r_i$. Since $g$ moves $\alpha_i$ and $c$ fixes only $\alpha_i$, the points $\alpha_i^{gc}$ and $ \alpha_i$ are distinct. 
But $(\alpha_i^{gc})^{r_i} = (\alpha_i^{gc})^{(r_i^{gc})} = \alpha_i^{r_igc} = \alpha_i^{gc}$, a contradiction since $r_i$ only fixes $\alpha_i$, so $N_i \leq \langle c_i \rangle$.

To see that $N_i \geq \langle c_i \rangle$, first notice that $c_i \in N_{\Sym(\Omega_i)}(G_i)$. Since $r_i$ is an element of $N_{\Sym(\Omega_i)}(G_i)$ which fixes $\alpha_i$, it is in $\langle c_i \rangle$, and so $c_i$ normalises $H_2|_{\Omega_i}$. 

Lastly the isomorphism follows from \Cref{define and analyse L}.\ref{item: L is wreath}. 
\end{proof}

Fix a non-trivial orbit $\Omega_{11}$ of $H_2|_{\Omega_1}$. 
For $2 \leq i \leq k$, let $\Omega_{i1}  \coloneqq  \Omega_{11}^{\overline{\phi_i}}$ be an $(H_2|_{\Omega_i})$-orbit, and let $\Gamma = \bigcup_{i=1}^k \Omega_{i1}$. 
Then $H_2|_{\Gamma}$ is in class $\InP(\Cy_2)$, so $N_{\Sym(\Gamma)}(H_2|_{\Gamma})$ can be computed using \Cref{algm: main norm algorithm}.

Next, we show how $N_T(H_2)$ can be constructed from $N_{\Sym(\Gamma)}(H_2|_{\Gamma})$. 
Let $\theta: N_{\Sym(\Gamma)}(H_2|_{\Gamma}) \rightarrow K$ be defined by  $\Omega_{i}^{\theta(g)}= \Omega_{j}$ if $\Omega_{i1}^g= \Omega_{j1}$, and let $R  \coloneqq  \langle N_{\Sym(\Omega_1)}(G_1), \ldots,N_{\Sym(\Omega_k)}(G_k), \mathrm{Im}(\theta) \rangle$. 

\begin{lemma} \label{I in ci wr Sk}
$N_T(H_2) = \langle  N_1, N_2, \ldots, N_k, \mathrm{Im}(\theta) \rangle \leq R$. 
\end{lemma}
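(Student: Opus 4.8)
The plan is to prove the two containments of the stated equality, the outer bound $\leq R$ being immediate: each $N_i \leq N_{\Sym(\Omega_i)}(G_i)$ and $\mathrm{Im}(\theta)$ is itself one of the generators listed for $R$. Throughout I will exploit the wreath decomposition $T = (N_1 \times \cdots \times N_k) \rtimes K \cong \Cy_{p-1}\wr\Sy_k$ furnished by \Cref{get N of T2} (part~2), writing each $\nu \in T$ uniquely as $\nu = n\kappa$ with $n \in N_1\times\cdots\times N_k$ and $\kappa \in K$. I write $r_i$ for the generator of $H_2|_{\Omega_i} = (D_i)_{(\alpha_i)}$ from \Cref{T2 results}.\ref{T2 is stab}, so that $H_2$ is a subdirect product of $\langle r_1\rangle \times \cdots \times \langle r_k\rangle \cong \Cy_2^k$.

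Two observations drive the argument. \emph{First}, the base group centralises $H_2$: each $c_i$ fixes $\alpha_i$ and normalises $H_2|_{\Omega_i} \cong \Cy_2$, hence centralises $r_i$ since $\mathrm{Aut}(\Cy_2)$ is trivial, and it commutes with every $r_j$ for $j \neq i$ by disjointness of supports; thus $N_1\times \cdots \times N_k \leq C_T(H_2)$. \emph{Second}, each $\kappa \in K$ acts on the points $\alpha_1, \ldots, \alpha_k$ exactly as it acts on the orbits $\Omega_1, \ldots, \Omega_k$, because the chosen $\phi_i$ satisfy $\phi_i(\alpha_1) = \alpha_i$, so that $\overline{\phi_i}$ interchanges $\alpha_1, \alpha_i$ and fixes the other $\alpha_j$. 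As $\kappa$ also sends $D_i$ to $D_{i^\kappa}$, it carries the unique involution $r_i$ of $D_i$ fixing $\alpha_i$ to $r_{i^\kappa}$; hence conjugation by $\kappa$ acts on $\langle r_1, \ldots, r_k\rangle \cong \Cy_2^k$ purely by permuting coordinates according to the orbit permutation $\pi$ it induces, which is well defined and faithful by \Cref{define and analyse L}.\ref{K isom Sk}.

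The bridge to $\Gamma$ is that restriction gives a coordinate-preserving isomorphism $H_2 \to H_2|_{\Gamma}$, faithful because $r_i$ swaps the two points of $\Omega_{i1}$; this identifies both groups with one and the same subgroup $\mathcal{C} \leq \Cy_2^k$, and a coordinate permutation stabilises $H_2$ if and only if it stabilises $H_2|_{\Gamma}$. For $\supseteq$: the $N_i$ lie in $N_T(H_2)$ by the centralising observation, and for $g \in N_{\Sym(\Gamma)}(H_2|_\Gamma)$ the image $\kappa = \theta(g)$ induces the same orbit permutation $\pi$ as $g$, which stabilises $\mathcal{C}$; by the coordinate-permutation description $\kappa$ normalises $H_2$, so $\mathrm{Im}(\theta) \leq N_T(H_2)$. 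For $\subseteq$: given $\nu = n\kappa \in N_T(H_2)$, the centralising observation forces $\kappa \in N_T(H_2)\cap K$, so its orbit permutation $\pi$ stabilises $\mathcal{C}$; then the element $g$ of the complement $\Sy_k$ in $N_{\Sym(\Gamma)}(H_2|_\Gamma) \leq \Cy_2 \wr \Sy_k$ (the wreath structure coming from \Cref{define and analyse L}.\ref{item: L is wreath} applied to $H_2|_\Gamma \in \InP(\Cy_2)$) that induces $\pi$ normalises $H_2|_\Gamma$ and, by faithfulness of the $K$-action, satisfies $\theta(g) = \kappa$. Hence $\kappa \in \mathrm{Im}(\theta)$ and $\nu \in \langle N_1, \ldots, N_k, \mathrm{Im}(\theta)\rangle$.

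The main obstacle is bookkeeping rather than any deep idea: I must make precise that the two conjugation actions — of $\kappa \in K$ on $\langle r_1, \ldots, r_k\rangle$ and of a $\theta$-preimage $g$ on $H_2|_\Gamma$ — realise the \emph{same} coordinate permutation of the common code $\mathcal{C}$, with no hidden scalars or twists, where the triviality of $\mathrm{Aut}(\Cy_2)$ is exactly what excludes such twists. The one genuinely load-bearing point is the normalisation $\phi_i(\alpha_1) = \alpha_i$: without it $\kappa$ need not send $\alpha_i$ to $\alpha_{i^\kappa}$, so $r_i^\kappa$ could be a different reflection of $D_{i^\kappa}$ and the clean coordinate-permutation picture — and with it the identification $N_T(H_2)\cap K = \mathrm{Im}(\theta)$ — would fail. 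I would therefore verify that property carefully before assembling the two containments.
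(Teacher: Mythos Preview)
Your proposal is correct and follows essentially the same route as the paper's proof. Both arguments hinge on the same two facts --- that each $N_i$ centralises $H_2$ (via $\mathrm{Aut}(\Cy_2)=1$) and that the normalisation $\phi_i(\alpha_1)=\alpha_i$ forces $K$ to permute the reflections $r_i$ coherently --- and both complete the reverse containment by showing the $K$-part of any $\nu\in N_T(H_2)$ lies in $\mathrm{Im}(\theta)$; your code-theoretic packaging via $\mathcal{C}\leq\Cy_2^k$ is a minor reorganisation rather than a different idea.
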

\begin{proof}
Observe that as $|(H_2|_{\Omega_i})|=2$, the normaliser and the centraliser of $H_2|_{\Omega_i}$ in $\Sym(\Omega_i)$ coincide, 
so $N_i \leq N_T(H_2)$. 
To show that $\mathrm{Im}(\theta) \leq N_T(H_2)$, let $g \in N_{\Sym(\Gamma)}(H_2|_{\Gamma})$ and let $h \in H_2$.
Then there exists $h' \in H$ such that $(h|_{\Gamma})^{g} = h'|_{\Gamma}$. 
We first show that $ h^{\theta(g)} = h'$. %, so $\mathrm{Im}(\theta) \leq N_{\Sy_n}(H_2)$. 

Fix $i$ and let $\Omega_{j1}= \Omega_{i1}^g$, so $\Omega_j = \Omega_i^{\theta(g)}$. 
Then $(H|_{\Omega_{i}})^{\theta(g)} =  H|_{\Omega_{j}}$ as $\theta(g) \in K$. 
Since $K$ acts on the fixed points of $H_2$,
\[ 
(H_2|_{\Omega_{i}})^{\theta(g)} = ((H|_{\Omega_{i}} )_{\alpha_i})^{\theta(g)} 
\leq (H|_{\Omega_{j}} )_{\alpha_j} = H_2|_{\Omega_{j}},
\]
and equality is achieved since both groups have order two. 
Now $h|_{\Omega_i} \neq 1$ if and only if $h'|_{\Omega_j}$ and $(h|_{\Omega_{i}})^{\theta(g)}$ are the unique non-trivial element of $H_2|_{\Omega_j}$, and $h|_{\Omega_i} = 1$ if and only if $h'|_{\Omega_j}=(h|_{\Omega_{i}})^{\theta(g)}=1$.  
Therefore $(h^{\theta(g)})|_{\Omega_{j}}  = (h|_{\Omega_{i}})^{\theta(g)} = h'|_{\Omega_j} $, as required.  

So $\theta(g)$ normalises $H_2$, and as $\theta(g) \in K \leq T$, it follows that $\theta(g) \in N_T(H_2)$. 
Therefore $N_T(H_2) \geq \langle  N_1, N_2, \ldots, N_k,  \mathrm{Im}(\theta) \rangle$. 

To show that these two groups are equal, let $\nu \in N_T(H_2)$ and let $\kappa  \coloneqq  \theta(\nu|_{\Gamma}) \in K \leq T$. Then as both $N_T(H_2)$ and $\mathrm{Im}(\theta)$ act on $\mathcal{O}$, it follows that $\Omega_i^{\kappa} = \Omega_j$ if and only if $\Omega_{i1}^{\nu|_{\Gamma}} = \Omega_{j1}$ if and only if $\Omega_{i}^{\nu} = \Omega_{j}$. 
Therefore $\nu \kappa^{-1}$ is an element of $T$ which fixes each $\Omega_i$ setwise. 
By \Cref{get N of T2}.\ref{intersection of norms is generated by ci }, $T \cong N_1 \wr \Sy_k$, so $\nu \kappa^{-1} \in  N_1 \times N_2 \times \cdots \times N_k $. 
\end{proof}

As in \Cref{thm: norm sym for our H in expo klogk}, we may compute $N_{R}(H_p)$ by considering all $\kappa \in \mathrm{Im}(\theta) \leq K$. 
Lastly, we show that $I$ can be computed from $N_{R}(H_p)$.

\begin{lemma}  \label{compute I from norm of groups in inp}
Let $W \leq \mathrm{GL}_k(p)$ be as in \Cref{subsection: norm as aut}, 
let $\Xi: L \rightarrow W$ be as in \Cref{conjActionToActionInVector} and let $\Xi|_{T} : T \rightarrow W$ be the restriction of $\Xi$ to $ T$. 
%Then $\Xi|_{T}$ is an isomorphism and $I$ is the pre-image of $\Xi(N_{R}(H_p))$ under $\Xi|_{T}$. 
Then $\Xi|_{T}$ is an isomorphism which maps $I$ to $\Xi(N_{R}(H_p))$. 
\end{lemma}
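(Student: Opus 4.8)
The plan is to prove the two assertions in turn, reading off everything from the structure theory of $T$ already established in \Cref{get N of T2} and from the properties of $\Xi$ in \Cref{conjActionToActionInVector}. The crucial observation is that $H_p \in \InP(\Cy_p)$ by \Cref{T2 results}.\ref{Tp subdir of G}, with enveloping group exactly $G$, so the map $\Xi\colon L \to W$ appearing here is literally the one from \Cref{conjActionToActionInVector} with $H$ replaced by $H_p$; in particular $\Ker(\Xi)=G$ and, for $l \in L$, we have $l \in N_{\Sy_n}(H_p)$ if and only if $\Xi(l) \in \MAut(\gamma(H_p))$.

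First I would prove that $\Xi|_T$ is an isomorphism. Since $\Ker(\Xi)=G$ by \Cref{conjActionToActionInVector}.\ref{xi is epi with kernel $G$}, we have $\Ker(\Xi|_T)=T\cap G$, so injectivity reduces to checking $T\cap G = 1$. By \Cref{get N of T2}.\ref{intersection of norms is generated by ci } we may write any $\tau \in T\cap G$ as $\tau=\beta\kappa$ with $\beta \in N_1\times\cdots\times N_k$ and $\kappa\in K$; as both $G$ and $\beta$ fix every $\Omega_i$ setwise, so does $\kappa$, forcing $\kappa=1$ by the faithful action of $K$ on $\mathcal{O}$ (\Cref{define and analyse L}.\ref{K isom Sk}). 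Then $\beta = c_1^{a_1}\cdots c_k^{a_k} \in G_1\times\cdots\times G_k$ gives $c_i^{a_i}\in G_i$ for each $i$, but $c_i$ fixes $\alpha_i$ while every nontrivial element of $G_i$ is a $p$-cycle, so $\langle c_i\rangle\cap G_i = 1$ and $\beta=1$. Hence $\Xi|_T$ is injective, and since $|T| = (p-1)^k\,k!$ (as $T\cong\Cy_{p-1}\wr\Sy_k$) equals $|W|=|D|\,|P|=(p-1)^k\,k!$, it is an isomorphism.

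For the image of $I$, I would first rewrite $I$ as a normaliser inside $S \coloneqq N_T(H_2)$. By \Cref{get N of T2}.\ref{I in T}, $I = N_T(H_2)\cap N_T(H_p) = S\cap N_{\Sy_n}(H_p) = N_S(H_p)$, and \Cref{I in ci wr Sk} gives $S = \langle N_1,\ldots,N_k,\mathrm{Im}(\theta)\rangle$. Applying the $\MAut$-characterisation above to the elements of $S$ and of $R$ (both of which lie in $L$) yields, by a routine pointwise argument, $\Xi(I) = \Xi(S)\cap\MAut(\gamma(H_p))$ and $\Xi(N_R(H_p)) = \Xi(R)\cap\MAut(\gamma(H_p))$. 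It then remains only to see $\Xi(S)=\Xi(R)$: since $N_{\Sym(\Omega_i)}(G_i)=\langle g_i,c_i\rangle$ (as in the proof of \Cref{get N of T2}) and $S$ already contains each $c_i$ and $\mathrm{Im}(\theta)$, we have $R = \langle S, g_1,\ldots,g_k\rangle = \langle S, G\rangle$, whence $\Xi(R) = \langle \Xi(S),\Xi(G)\rangle = \Xi(S)$ because $\Xi(G)=1$. Combining the three displays gives $\Xi(I)=\Xi(N_R(H_p))$, and injectivity of $\Xi|_T$ together with $I\le T$ then lets one recover $I$ as $(\Xi|_T)^{-1}\bigl(\Xi(N_R(H_p))\bigr)$.

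I expect the only genuine care to be needed in the injectivity step $T\cap G=1$ and in being explicit that the $\MAut$ framework of \Cref{section: Cpk} is being invoked for $H_p$ with its own enveloping group $G$; once $\Ker(\Xi)=G$ is in hand, the equality $\Xi(R)=\Xi(S)$ — the heart of the second assertion — is immediate, since passing from $S$ to $R$ only adjoins generators of the kernel.
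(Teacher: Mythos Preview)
Your proof is correct and follows essentially the same route as the paper: both establish that $\Xi|_T$ is an isomorphism via $T\cap G=1$ (the paper asserts this more tersely from $G\cong\Cy_p^k$ and $T\cong\Cy_{p-1}\wr\Sy_k$, while you argue it pointwise and then count, whereas the paper first gets surjectivity from $L=\langle T,G\rangle$), and both reduce the second assertion to the key identity $\Xi(R)=\Xi(N_T(H_2))$, obtained from $R=\langle G,N_T(H_2)\rangle$ together with $\Xi(G)=1$. Your use of the $\MAut$ characterisation to package the argument as $\Xi(I)=\Xi(S)\cap\MAut(\gamma(H_p))=\Xi(R)\cap\MAut(\gamma(H_p))=\Xi(N_R(H_p))$ is a clean variant of the paper's direct element chase via $\Xi|_T^{-1}$, but the underlying ideas coincide.
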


\begin{proof}
It follows from \Cref{get N of T2}.\ref{intersection of norms is generated by ci } that $L = \langle T, G \rangle$. 
By \Cref{conjActionToActionInVector}.\ref{xi is epi with kernel $G$}, $\Xi$ is an epimorphism and $\Xi(G)=1$, so $W = \Xi(L) = \Xi(T)$, and hence $\Xi|_T$ is an epimorphism. 
Note also that the groups $G \cong \Cy_p^k$ and $T \cong \Cy_{p-1} \wr \Sy_k$ have trivial intersection, so $L/G \cong T$. 
Hence $W \cong L/\Ker(\Xi) \cong T$, therefore $\Xi|_T$ is an isomorphism.

%To show that $\Xi|_{T}(I) \leq \Xi(N_{R}(H_p))$, 
By \Cref{get N of T2}.\ref{I in T} and \Cref{I in ci wr Sk},  
\[ I  = N_T(H_2) \cap N_T(H_p) \leq R \cap N_{T}(H_p) \leq N_{R}(H_p), \]
and so $\Xi(I) \leq \Xi(N_{R}(H_p))$. 
For the other containment, let $r \in N_{R}(H_p)$.
Then $\tau \coloneqq \Xi|_{T}^{-1}(\Xi(r))$ is an element of $T$ normalising $H_p$, so $\tau \in N_T(H_p)$. 
%We will show that $\Xi|_{T}^{-1}(\Xi(r))$ normalises $H_2$. 

Observe from \Cref{get N of T2}.\ref{intersection of norms is generated by ci } that  $N_{\Sym(\Omega_i)}(G_i) = \langle g_i, N_i \rangle$. So $R = \langle  G, N_1, N_2, \ldots, N_k, \mathrm{Im}(\theta) \rangle$, which is $\langle G, N_T(H_2) \rangle$ by \Cref{I in ci wr Sk}.  
Now as $\Xi(G)=1$ by \Cref{conjActionToActionInVector}.\ref{xi is epi with kernel $G$}, $\Xi(R) = \Xi(N_T(H_2))$. 
%Therefore $\Xi(N_{R}(H_p)) \leq \Xi(N_T(H_p)) \cap \Xi(N_T(H_2)) = \Xi(I)$. 
This means that $\tau \in N_T(H_2)$ and so $\tau \in I$. Hence $\Xi(r) \in \Xi(I)$ and therefore $\Xi(N_{R}(H_p)) = \Xi(I)$. 
\end{proof}

Therefore, we compute $N_{\Sy_n}(H)$ for $H \leq \Sy_n$ in $\InP(\Di_{2p})$ in the following way. First we compute the Sylow $p$-subgroup $H_p$ and a Sylow $2$-subgroup $H_2$ of $H$. Then we compute $N_{\Sym(\Gamma)}(H_2|_{\Gamma})$ and $N_R(H_p)$
using \Cref{algm: main norm algorithm}. Next we compute $\Xi|_{T}^{-1}(\Xi(N_{R}(H_p)))$, which is equal to $N_L(H_p) \cap N_L(H_2)$ by \Cref{compute I from norm of groups in inp}. 
By \Cref{frattini on subdir od D2pk}, $N_{\Sy_n}(H) = \langle N_L(H_p) \cap N_L(H_2), H \rangle$.

%%%%%%%%%%%%%%%%%%%%%%%%%%%%%%%%%%%%%%%%%%%%%%%%%%%%%%%%%%%%%%%%%%%%%%%%%%%%%%%%%%%%%%%%%%%%%%%%%

\section{Results}
\label{section: results}
\label{Cpk result}\label{Dpk result}

In our experiments we considered groups $H \leq \Sy_{pk}$ in class $\InP(\Cy_p)$  that are isomorphic to $\Cy_p^{k/2}$, for {$p = 2,3,5,11$.} 
We generate these instances by populating the entries of a $k/2 \times k$ matrix with random elements of $\mathds{F}_p$. 
We rerun the generation if $\mathrm{rank} \, M \neq k/2$.

For each value of $p$ and $k$, we create 10 instances of $H$ and compute  $N_{\Sy_n}(H)$ using both the \texttt{GAP} function \textsc{Normalizer} and our new algorithm, each run with a 10-minute time limit. 
We report the median, lower quartile and upper quartile time, in seconds, in \Cref{fig:Cpk}. 
The lower and upper boundaries of the shaded area give the lower and upper quartiles respectively.

Next we compare the performance of computing $N_{\Sy_n}(H)$ for $H \leq \Sy_n$ in class $\InP(\Cy_p)$ using the methods described in  \Cref{limit depth by base size} and \Cref{thm: norm sym for our H in expo klogk} respectively. 
The results are shown in \Cref{fig:limitDepthvsReduceBranchings}, where \textsc{fullSearch} refers to the algorithm described in \Cref{section: algorithm}. 
To obtain complexity $2^{O(\frac{n}{p} \log{\frac{n}{p} + \log{n})}}$, \textsc{limitDepth} is a combination of methods of \Cref{limit depth by base size} and \Cref{thm: norm sym for our H in expo klogk}. The algorithm is as follows. 
Let $H \in \InP(\Cy_p)$ have order $p^s$. 
As in \Cref{algm: main norm algorithm}, we perform backtrack search in $K$.
At a node at depth $s$, we iterate over all $(p-1)^s$ combination of $(\mathds{F}_p^*)^s$, as in \Cref{limit depth by base size}.  
If we succeed in finding a normalising element $g \in N_{\Sy_n}(H)$, we update $N$ as $\langle N,g \rangle$, else we backtrack.
Results (\Cref{fig:limitDepthvsReduceBranchings}) show that even though \textsc{fullSearch} has a higher worst case complexity, it performs 
much better than \textsc{limitDepth} in practice, especially where $p$ or $s$ are large.
\begin{table}[ht!]
\centering
\begin{minipage}{.45\linewidth}
\begin{tabular}{cccc}
\hline
$p$ & $s$ & \textsc{fullSearch}& \textsc{limitDepth} \\
\hline
5 & 4 &	0.11 &	0.125 \\ 
%5 & 5 &	0.281 &	0.305 \\
5 & 6 &	0.4765 & 0.625 \\
%5 & 7 &	9.0075 & 9.133 \\
5 & 8 &	3.0625 & 2.953 \\
%5 & 9 &	1.734 &	1.8125 \\
5 & 10 & 8.2415 & 65.75 \\
\hline
\end{tabular} 
\end{minipage}
\quad
\begin{minipage}{.45\linewidth}
\begin{tabular}{cccc}
\hline
$p$ & $s$& \textsc{fullSearch}& \textsc{limitDepth} \\
\hline
2 & 6 & 0.125 &	0.125\\
3 & 6 & 0.2655 & 0.297\\
%5 & 6 &	0.4765 & 0.625 \\
7 & 6 &	0.9605 & 12.0235 \\
11 & 6 & 5.453 & >>600 \\
\hline
\end{tabular}
\end{minipage}
\caption{Median times (s) to compute $N_{\Sy_n}(H)$ for 10 instances of $H \leq \Sy_{20p}$ in $\InP(\Cy_p)$. }
\label{fig:limitDepthvsReduceBranchings}
\end{table}

We also consider $H \leq \Sy_{pk}$ in class $\InP(\Di_{2p})$ as in \Cref{subdir of dihedral}, for $p = 3,11$.
Using \Cref{T2 results}.\ref{H is TpT2}, we generate $H$ as product of groups in $\InP(\Cy_2)$ and $\InP(\Cy_{p})$. 
We generate these groups that are isomorphic to $\Cy_2^{k/2}$ and $\Cy_p^{k/2}$ respectively using the method described before. 
The rest of the experiment works the same as that for $\InP(\Cy_p)$.
We report computation times in \Cref{fig:dpk}.

All algorithms are implemented in \texttt{GAP}, apart from \textsc{makeInvSetPartn} in \Cref{algm: make partitions}, which is implemented in \texttt{C++}.

\begin{figure}[htp!]
   \centering
   \subfloat[$p=2$]{\includegraphics[width=.45\textwidth,height=.25\textheight]{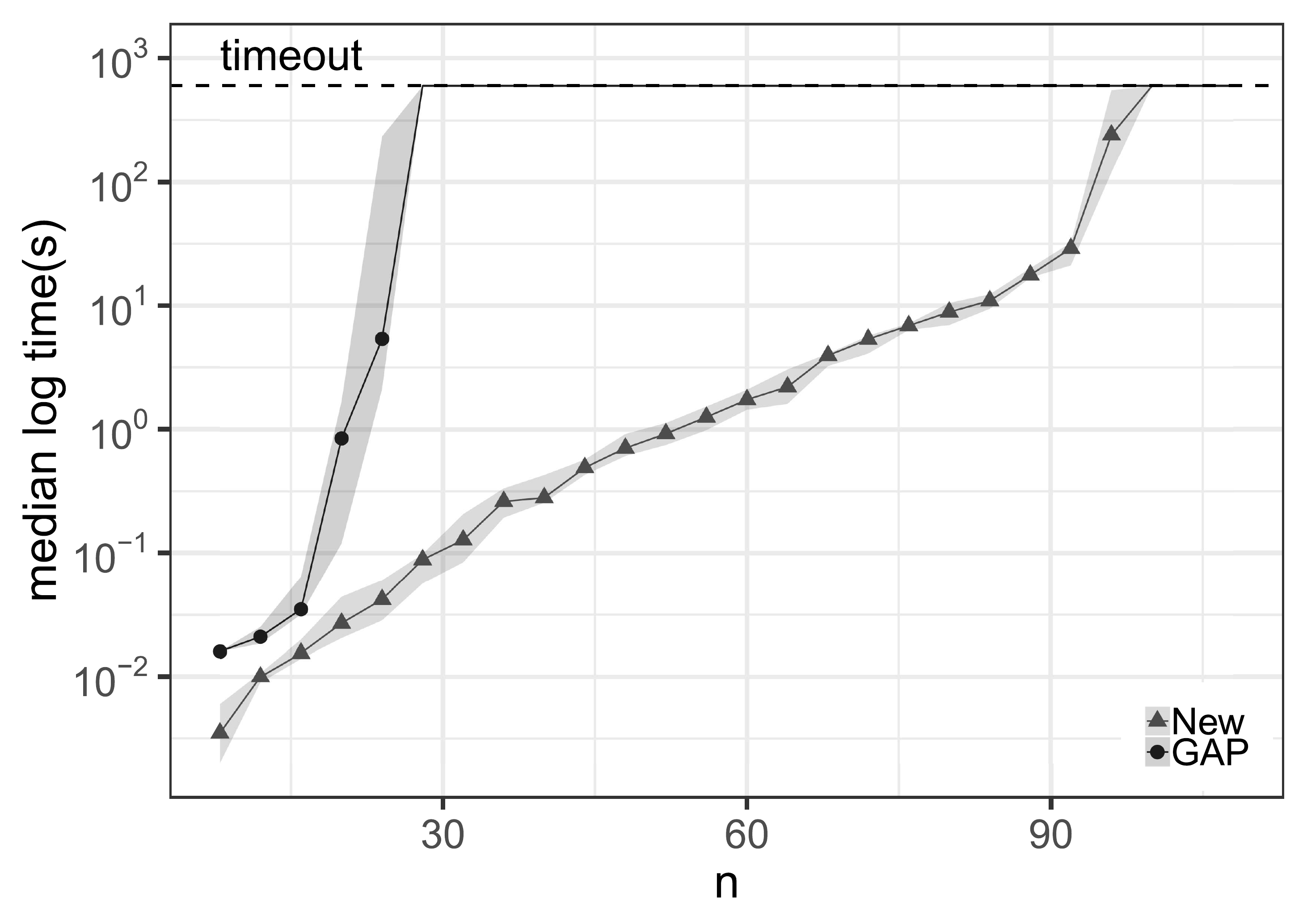}}\quad
   \subfloat[$p=3$]{\includegraphics[width=.45\textwidth,height=.25\textheight]{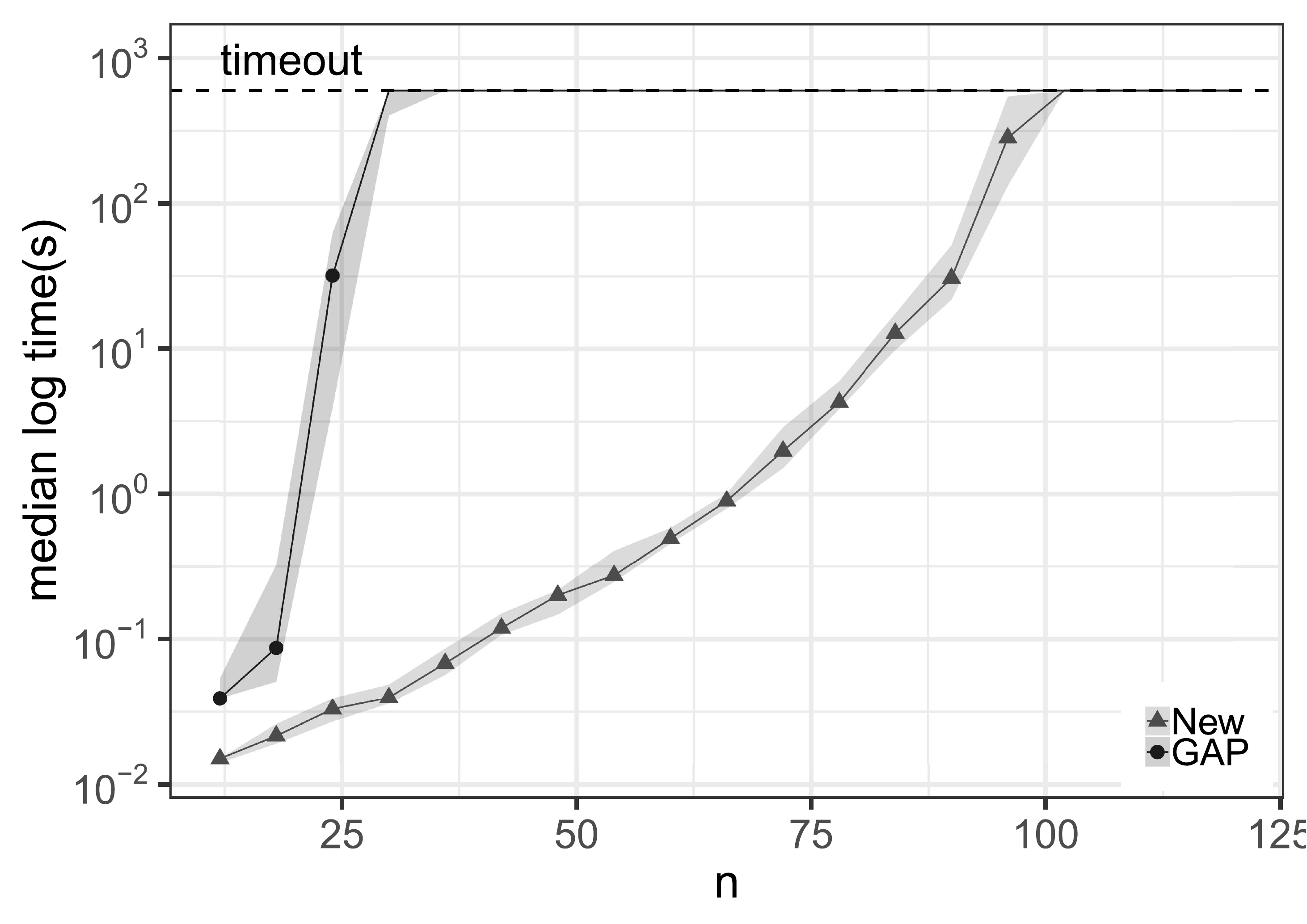}}\\
   \subfloat[$p=5$]{\includegraphics[width=.45\textwidth,height=.25\textheight]{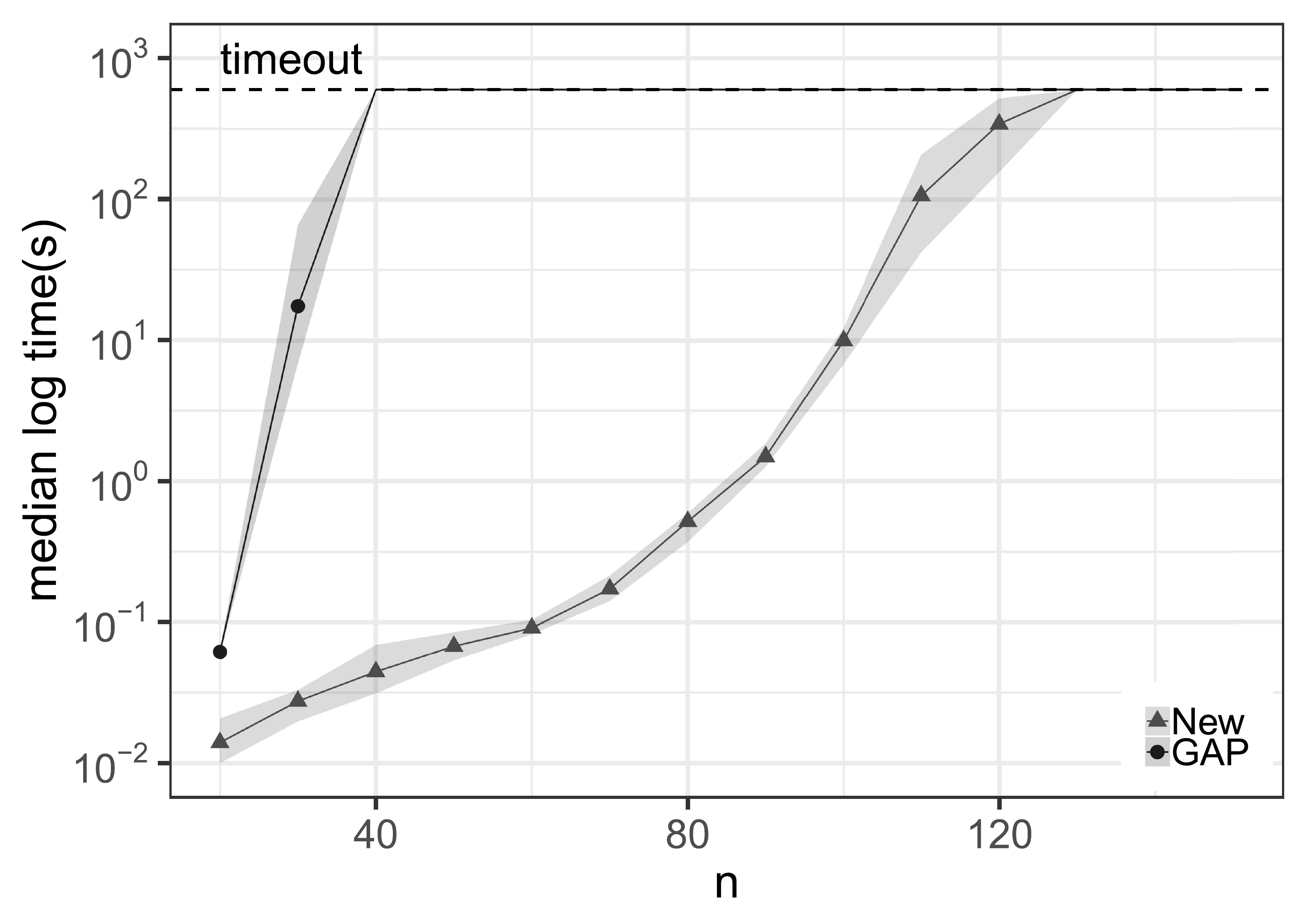}}\quad
    \subfloat[$p=11$]{\includegraphics[width=.45\textwidth,height=.25\textheight]{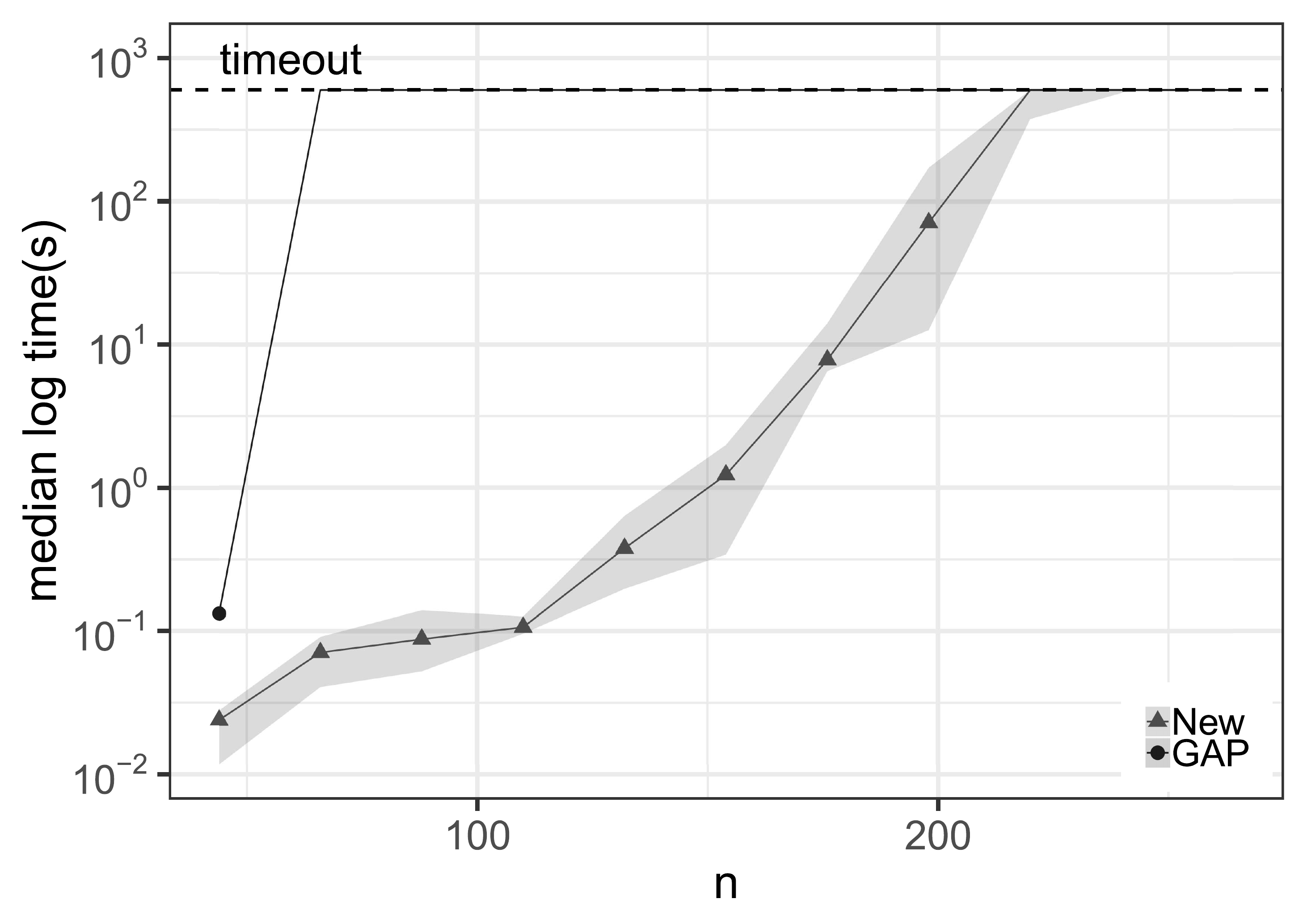}}
   \caption{Median log time (secs) to compute $N_{\Sy_n}(H)$ for 10 instances of $H \leq \Sy_n$ in $\InP(\Cy_p)$. }
   \label{fig:Cpk}
\end{figure}
\begin{figure}[htp!]
   \centering
   \subfloat[$p=3$]{\includegraphics[width=.45\textwidth,height=.25\textheight]{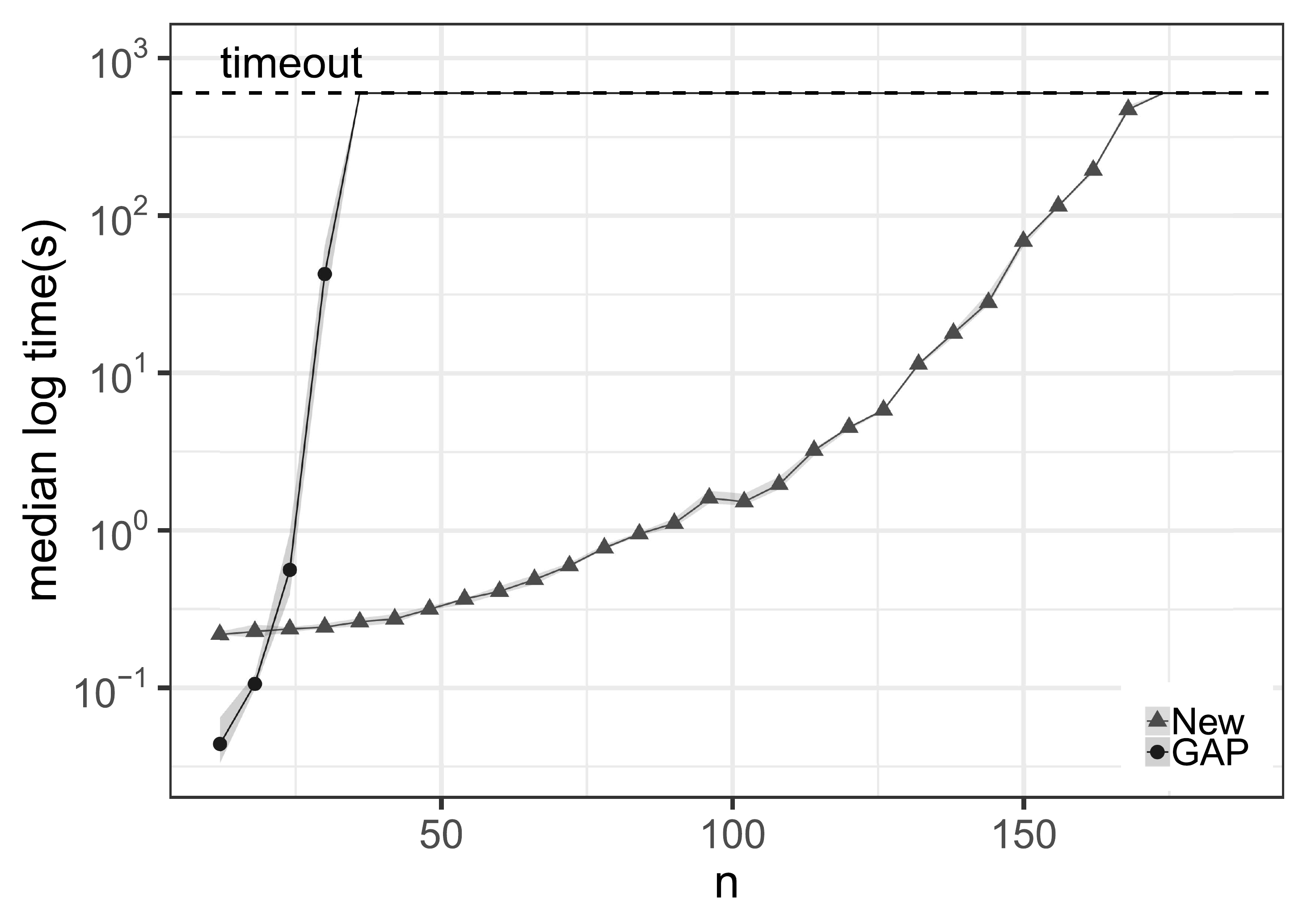}}\quad
   \subfloat[$p=11$]{\includegraphics[width=.45\textwidth,height=.25\textheight]{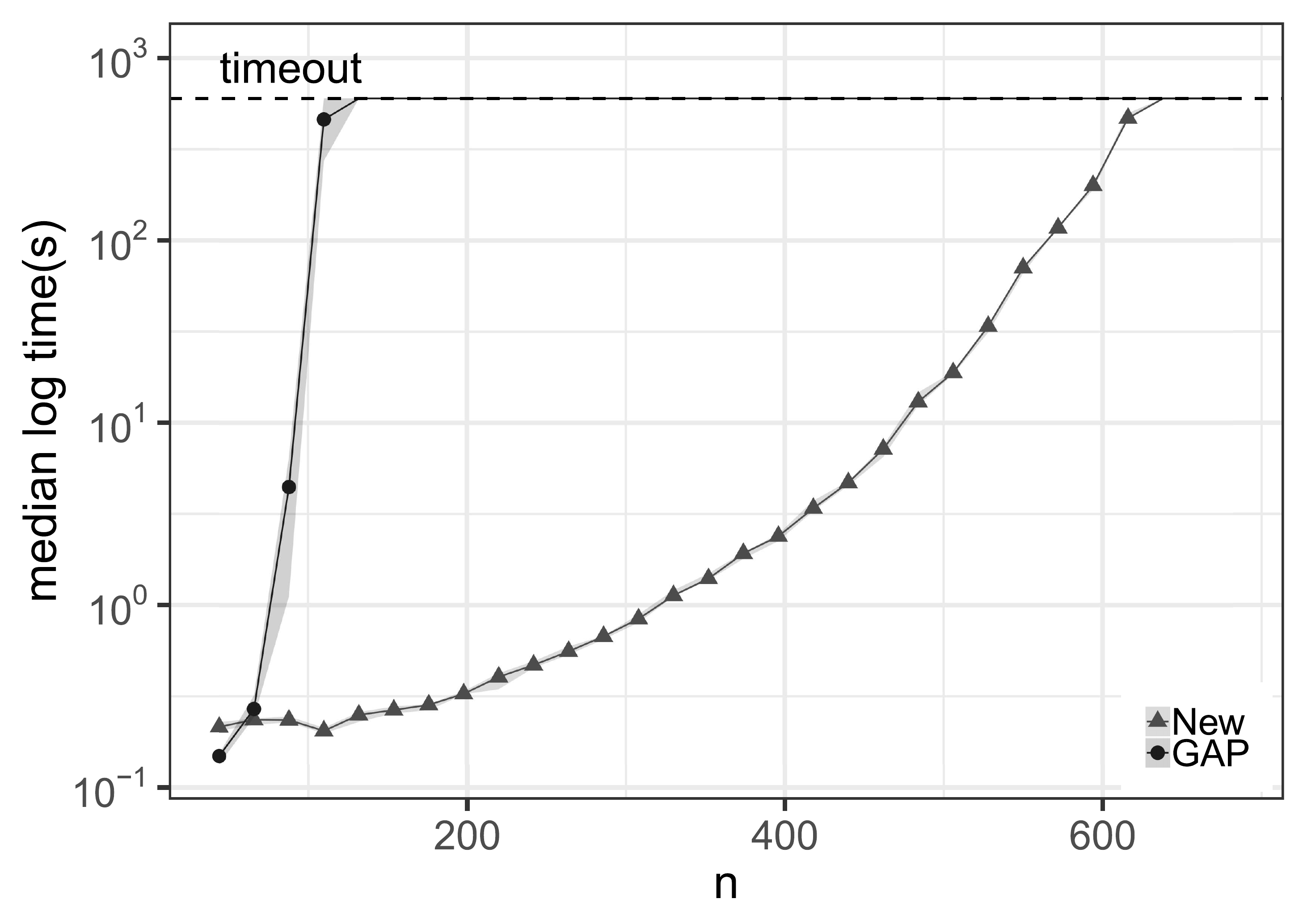}} \\
   \caption{Median log time (secs) to compute $N_{\Sy_n}(H)$ for 10 instances of  $H \leq \Sy_n$ in $\InP(\Di_{2p})$.}
   \label{fig:dpk}
\end{figure}

%%%%%%%%%%%%%%%%%%%%%%%%%%%%%%%%%%%%%%%%%%%%%%%%%%%%%%%%%%%%%%%%%%%%%%%%%%%%%%%%%%%%%%%%%%%%%%%%%

\section*{Acknowledgements}

%The first and third authors would like to thank the Isaac Newton Institute for Mathematical Sciences for support and hospitality during the programme “Groups, Representations and Applications: New perspectives”, where work on this paper was undertaken. This work was supported by: EPSRC grant numbers EP/R014604/1 and EP/M022641/1.
The first and third authors would like to thank the Isaac Newton Institute for Mathematical Sciences, Cambridge, for support and hospitality during the programme “Groups, Representations and Applications: New perspectives”, where work on this paper was undertaken. This work was supported by EPSRC grant no EP/R014604/1.
This work was also partially supported by a grant from the Simons Foundation. 
The first and second authors are supported by the Royal Society (RGF\textbackslash EA\textbackslash181005 and URF\textbackslash R\textbackslash180015). 
%\section*{Declarations of interest}
%Declarations of interest: none.

%%%%%%%%%%%%%%%%%%%%%%%%%%%%%%%%%%%%%%%%%%%%%%%%%%%%%%%%%%%%%%%%%%%%%%%%%%%%%%%%%%%%%%%%%%%%%%%%%
\newpage

\bibliographystyle{abbrv}

\end{document}